\def\sqr#1#2{{\vcenter{\vbox{\hrule height.#2pt
              \hbox{\vrule width.#2pt height#1pt \kern#1pt \vrule width.#2pt}
              \hrule height.#2pt}}}}
\newtheorem {theorem}{Theorem}[section]
\newtheorem {lemma}[theorem]{{\bf Lemma}}
\newtheorem {proposition}[theorem]{{\bf Proposition}}
\theoremstyle{remark}
\newtheorem {remark}{{\bf Remark}}[section]
\theoremstyle{definition}
\newtheorem {definition}{{\bf Definition}}[section]
\theoremstyle{plain} \numberwithin {equation}{section}
\numberwithin{assumption}{section}
\def\dbE{{\mathbb{E}}}
\def\dbF{{\mathbb{F}}}
\def\dbN{{\mathbb{N}}}
\def\dbR{{\mathbb{R}}}
\def\Om{\Omega}
\def\om{\omega}
\begin{document}

\title{\bf Null controllability for  stochastic fourth order parabolic equations\footnote{This work is supported by the NSF of China under grants 12025105, 11971334, and by the Chang Jiang Scholars Program from the Ministry of Education of the Peoples Republic of China.}}

\author{
    Qi L\"{u}
    \footnote{School of Mathematics, Sichuan University, Chengdu, P. R. China. Email: lu@scu.edu.cn. }
    ~~~ and ~~~
    Yu Wang
    \footnote{School of Mathematics, Sichuan University, Chengdu, P. R. China.
    Email: yuwangmath@163.com.}
}

\date{}

\maketitle

\begin{abstract}
    We establish the null controllability for linear stochastic fourth order parabolic equations.
    Utilizing the duality argument, the null controllability is reduced to the observability for backward  fourth order stochastic parabolic equations, and the desired observability estimate is obtained by a new global Carleman estimate.
    Our Carleman estimate  is based on a new fundamental identity for a stochastic fourth order parabolic operator.
\end{abstract}

\noindent\bf AMS Mathematics Subject Classification.
\rm 93B05, 93B07.

\noindent{\bf Keywords}.   Stochastic fourth order parabolic equations, null controllability, observability, Carleman estimate.

\pagestyle{plain}

\section{Introduction}

Let $T>0$ and $(\Omega, \mathcal{F},
\mathbf{F}, \mathbb{P})$ with $\mathbf
F=\{\mathcal{F}_{t}\}_{t \geq 0}$ be a complete
filtered probability space on which a
one-dimensional standard Brownian motion
$\{W(t)\}_{t \geq 0}$ is defined and
$\mathbf{F}$ is the natural filtration generated
by $W(\cdot)$, augmented by all the $\mathbb{P}$
null sets in $\mathcal{F}$.  Let $H$ be a Banach
space. Denote   by
$L^2_{\mathcal{F}_t}(\Omega;H)$ the space
of all $\mathcal{F}_t$-measurable random
variables $\xi$ such that
$\dbE|\xi|_H^2<\infty$; by
$L_{\mathbb{F}}^{2}(0, T ; H)$ the space
consisting of all $H$-valued $\mathbf F$-adapted
processes $X(\cdot)$ such that
$\mathbb{E}\bigl(|X(\cdot)|_{L^{2}(0, T ;
H)}^{2}\bigr)<\infty$; by
$L_{\mathbb{F}}^{\infty}(0, T ; H)$  the space
consisting of all $H$-valued $\mathbf F$-adapted
bounded processes; and by
$L_{\mathbb{F}}^{2}(\Omega ; C[0, T] ; H))$ the
space consisting of all $H$-valued $ \mathbf{F}
$-adapted continuous processes $X(\cdot)$ such
that $\mathbb{E}\bigl(|X(\cdot)|_{C([0, T] ;
H)}^{2}\bigr)<\infty$. All these spaces are
Banach spaces with the cononical norms (e.g.,
\cite[Section 2.6]{Lue2021a}).

Let $G \subset \mathbb{R}^{n}$ ($ n\in\dbN $) be
a bounded domain with a $C^\infty$ boundary
$\partial G$. Set $Q=(0, T) \times G$ and $
\Sigma= (0, T) \times \partial G$. Let $G_{0}
\subset G$ be a nonempty open subset.  As usual,
$ \chi_{G_{0}} $   denotes the characteristic
function of $ G_{0} $.

Consider  the following control system:
\begin{equation}\label{eq.forwardEquation}
    \left\{\begin{alignedat}{2}
        &d y+\Delta^2 y d t=(a_{1} y+\chi_{G_{0}} u+a_{3} v) d t+(a_{2} y+v) d W(t) && \quad \text { in } Q, \\
        &y=\Delta y=0 &&\quad \text { on }\Sigma, \\
        &y(0)=y_{0} &&\quad \text { in } G,
    \end{alignedat}\right.
\end{equation}
where $y_0\in L^2(G)$, $a_{1}, a_{2}, a_{3} \in
L^{\infty}_{\mathbb{F}}( 0,T;L^{\infty}(G) )$
and $(u, v) \in L_{\mathbb{F}}^{2}(0, T ;
L^{2}(G_{0})) \times$ $L_{\mathbb{F}}^{2}(0, T ;
L^{2}(G))$.  In \eqref{eq.forwardEquation}, $y$
is the state and $(u,v)$ are the controls. 
By the classical wellposedness result for
stochastic evolution equations, we know that
\eqref{eq.forwardEquation}  admits a unique weak
solution $y\in
L^2_\dbF(\Om;C([0,T];L^2(G)))\times
L^2_\dbF(0,T;H^2(G)\cap H_0^1(G))$ (e.g.,
\cite[Theorem 3.24]{Lue2021a}).

\begin{remark}
	The
	term $a_3v$ reflects the effect of the control
	$v$ in the diffusion term to the drift term. It is the side effect of the control $v$, i.e.,  once we put a control $v$ in the diffusion term, $a_3v$ appears passively.  This will lead to some technical difficulties in the study of the null controllability of \eqref{eq.forwardEquation}.
\end{remark}

When $u=v=0$, the equation
\eqref{eq.forwardEquation}  is widely used to
describe some fast diffusion phenomenon under
stochastic perturbations (e.g.,
\cite{Cook1970,Hohenberg1977,Langer1971}).
Further, it is the linearized version of
stochastic Cahn-Hilliard equation, which is used
to model phase separation phenomena in a melted
alloy that is quenched to a temperature at which
just two distinctive phases can exist steadily
(e.g., \cite{CardonWeber2001}). By acting
control on the system, we aim to modify the
diffusion of the material. It
is an important goal
to drive the state to the rest. This leads to
the following definition.
\begin{definition}
The system \eqref{eq.forwardEquation} is said to
be {\it null controllable} at time $T$ if for
any $y_{0} \in L^{2}(G)$, there exists a pair
$(u, v) \in L_{\mathbb{F}}^{2}(0, T ;
L^{2}(G_{0})) \times$ $L_{\mathbb{F}}^{2}(0, T ;
L^{2}(G))$ such that the corresponding solution
to \eqref{eq.forwardEquation} fulfills that
$y(T)=0$, $\mathbb{P}$-a.s.
\end{definition}

The main result of this paper is the following.
\begin{theorem}\label{thm.nullControllableOfForwardEquation}
    System \eqref{eq.forwardEquation} is null controllable at any time $ T>0. $
\end{theorem}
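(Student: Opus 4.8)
The plan is to argue by duality: reduce Theorem~\ref{thm.nullControllableOfForwardEquation} to an observability estimate for the backward stochastic fourth order parabolic equation adjoint to \eqref{eq.forwardEquation}, and prove that estimate by a new global Carleman estimate, the latter resting on a new fundamental pointwise identity for the stochastic biharmonic parabolic operator.

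\emph{Step 1 (duality).} First I would introduce the adjoint system
\[
\left\{\begin{alignedat}{2}
&dz-\Delta^2 z\,dt=-(a_1 z+a_2 Z)\,dt+Z\,dW(t) &&\quad\text{in }Q,\\
&z=\Delta z=0 &&\quad\text{on }\Sigma,\\
&z(T)=z_T &&\quad\text{in }G,
\end{alignedat}\right.
\]
which for any $z_T\in L^2_{\mathcal F_T}(\Omega;L^2(G))$ has a unique solution $(z,Z)$ with $z\in L^2_{\mathbb F}(\Omega;C([0,T];L^2(G)))\cap L^2_{\mathbb F}(0,T;H^2(G)\cap H^1_0(G))$ and $Z\in L^2_{\mathbb F}(0,T;L^2(G))$, with interior regularity in $x$ for $t<T$. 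Applying It\^o's formula to $t\mapsto\langle y(t),z(t)\rangle_{L^2(G)}$, using the self-adjointness of $\Delta^2$ under the boundary conditions $y=\Delta y=z=\Delta z=0$, and taking expectations, yields the duality identity
\[
\mathbb E\,\langle y(T),z_T\rangle_{L^2(G)}-\langle y_0,z(0)\rangle_{L^2(G)}=\mathbb E\int_0^T\!\!\int_G\big(\chi_{G_0}u\,z+v\,(a_3 z+Z)\big)\,dx\,dt .
\]
Hence $y(T)=0$ for every $y_0$ is equivalent to solving, for $(u,v)$, the relation $\langle y_0,z(0)\rangle=-\mathbb E\int_Q\big(\chi_{G_0}u\,z+v(a_3 z+Z)\big)$ for all $z_T$, and such a pair is produced (by a standard minimization/penalization argument) from the minimizer $\widehat z_T$ of the convex functional
\[
\mathcal J(z_T)=\tfrac12\,\mathbb E\int_0^T\!\!\int_{G_0}|z|^2\,dx\,dt+\tfrac12\,\mathbb E\int_0^T\!\!\int_G|a_3 z+Z|^2\,dx\,dt+\langle y_0,z(0)\rangle_{L^2(G)},
\]
the Euler--Lagrange equation giving the controls $u=\widehat z\,\chi_{G_0}$ and $v=a_3\widehat z+\widehat Z$. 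The coercivity of $\mathcal J$, hence existence of the minimizer, hence the theorem, amounts to the observability estimate
\begin{equation}\label{eq.obsProposal}
\|z(0)\|_{L^2(G)}^2\le C\Big(\mathbb E\int_0^T\!\!\int_{G_0}|z|^2\,dx\,dt+\mathbb E\int_0^T\!\!\int_G|a_3 z+Z|^2\,dx\,dt\Big).
\end{equation}
Keeping $a_3 z+Z$ (rather than just $Z$) on the right is exactly what neutralizes the passive term $a_3v$; this costs nothing below, since $|a_1 z+a_2 Z|^2\le C(|z|^2+|a_3 z+Z|^2)$ and $|Z|^2\le C(|z|^2+|a_3 z+Z|^2)$.

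\emph{Step 2 (Carleman estimate --- the core).} By the Fursikov--Imanuvilov construction, fix $G_0'\Subset G_0$ and $\psi\in C^4(\overline G)$ with $\psi>0$ in $G$, $\psi=0$ and $\partial_\nu\psi<0$ on $\partial G$, and $|\nabla\psi|>0$ on $\overline{G\setminus G_0'}$; for large $\lambda,\mu>0$ put
\[
\varphi=\frac{e^{\mu\psi}}{t(T-t)},\qquad \alpha=\frac{e^{\mu\psi}-e^{2\mu\|\psi\|_{C(\overline G)}}}{t(T-t)},\qquad \theta=e^{\lambda\alpha},
\]
so that $\theta$, and more generally $\theta\varphi^k$ for every $k$, vanish as $t\to0^+,T^-$, while $\varphi\ge 4/T^2$. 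I would then prove a Carleman estimate of the form
\[
\begin{aligned}
&\mathbb E\int_Q\theta^2\big(\lambda^7\mu^8\varphi^7|z|^2+\lambda^5\mu^6\varphi^5|\nabla z|^2+\lambda^3\mu^4\varphi^3|\Delta z|^2+\lambda\mu^2\varphi|\nabla\Delta z|^2\big)\,dx\,dt\\
&\qquad\le C\,\mathbb E\int_Q\theta^2|a_1 z+a_2 Z|^2\,dx\,dt+C\,\mathbb E\int_0^T\!\!\int_{G_0}\theta^2\lambda^7\mu^8\varphi^7|z|^2\,dx\,dt\\
&\qquad\qquad+C\,\mathbb E\int_Q\theta^2\,(\text{subleading weight})\,|Z|^2\,dx\,dt,
\end{aligned}
\]
valid for all $\lambda\ge\lambda_0$, $\mu\ge\mu_0$ and all $z_T$, where ``subleading weight'' is a monomial in $\lambda,\mu,\varphi$ whose power of $\lambda$ is strictly less than $7$ (the precise exponents are immaterial). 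The proof rests on a \emph{new fundamental pointwise identity} for the stochastic operator $d+\Delta^2\,dt$: writing $v=\theta z$ and computing $dv-\Delta^2 v\,dt$ from the equation and It\^o's formula, after expanding $\Delta^2(\theta z)$ and $\theta_t z$ one obtains $dv-\Delta^2 v\,dt=\big(\mathcal A v+\mathcal B v+\theta(a_1 z+a_2 Z)\big)\,dt+\theta Z\,dW$, with $\mathcal A v$ the (formally) self-adjoint principal part and $\mathcal B v$ the antisymmetric and lower-order part in the conjugated variable; multiplying the drift by $\mathcal A v$, applying It\^o to the resulting quadratic functionals and integrating by parts in $(t,x)$ produces an identity of the schematic shape
\[
2(\mathcal A v)(\mathcal B v)+(\text{It\^o corrections})=dM+\operatorname{div}_x V+\big(\text{positive definite form in }v,\nabla v,\Delta v,\nabla\Delta v\big)+(\text{lower order}),
\]
the weights $\lambda,\mu$ being tuned so that, after integrating over $Q\times\Omega$, the term $dM$ disappears (its endpoint values vanish since $\theta$ does), $\operatorname{div}_x V$ yields $\Sigma$-boundary integrals of favorable sign via $\partial_\nu\psi<0$ and $v|_\Sigma=0$, and the positive form dominates all lower-order terms and It\^o corrections. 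This is the biharmonic, stochastic analogue of the classical weighted identity for $\partial_t+\Delta$.

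\emph{Step 3 (from the Carleman estimate to \eqref{eq.obsProposal}).} In the Carleman estimate I would bound $\mathbb E\int_Q\theta^2|a_1 z+a_2 Z|^2\le C\mathbb E\int_Q\theta^2|z|^2+C\mathbb E\int_Q\theta^2|a_3 z+Z|^2$, the $\theta^2|z|^2$-part being absorbed by the leading term for $\lambda$ large (recall $\varphi\ge 4/T^2$); similarly $|Z|^2\le 2|a_3 z+Z|^2+C|z|^2$, whose $|z|^2$-part carries a subleading power of $\lambda$ and is likewise absorbed, while the remaining $\mathbb E\int_Q\theta^2\,(\text{subleading weight})|a_3 z+Z|^2\le C_{\lambda,\mu}\mathbb E\int_Q|a_3 z+Z|^2$ since $\theta^2\varphi^k$ is bounded. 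Fixing $\lambda,\mu$ this gives $\mathbb E\int_Q\theta^2\varphi^7|z|^2\,dx\,dt\le C\big(\mathbb E\int_Q|a_3 z+Z|^2\,dx\,dt+\mathbb E\int_0^T\!\!\int_{G_0}|z|^2\,dx\,dt\big)$, and since $\theta^2\varphi^7\ge c>0$ on $[T/4,3T/4]\times G$, there is $s_*\in[T/4,3T/4]$ with $\mathbb E\|z(s_*)\|_{L^2(G)}^2\le C\big(\mathbb E\int_Q|a_3 z+Z|^2\,dx\,dt+\mathbb E\int_0^T\!\!\int_{G_0}|z|^2\,dx\,dt\big)$. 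Finally, It\^o's formula for $\|z(t)\|_{L^2(G)}^2$, combined with $\langle z,\Delta^2 z\rangle_{L^2(G)}=\|\Delta z\|_{L^2(G)}^2\ge 0$, yields $\mathbb E\|z(t_1)\|_{L^2(G)}^2\le\mathbb E\|z(t_2)\|_{L^2(G)}^2+C\mathbb E\int_{t_1}^{t_2}\|z(t)\|_{L^2(G)}^2\,dt$ for $t_1\le t_2$, hence $\|z(0)\|_{L^2(G)}^2=\mathbb E\|z(0)\|_{L^2(G)}^2\le e^{CT}\mathbb E\|z(s_*)\|_{L^2(G)}^2$ by Gr\"onwall's inequality. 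Combining these proves \eqref{eq.obsProposal}, and the theorem follows.

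\emph{Main obstacle.} The decisive step is Step~2: designing the conjugated operators $\mathcal A,\mathcal B$ and the exponents of $\lambda,\mu$ so that the pointwise identity for the \emph{fourth order} stochastic parabolic operator genuinely produces a positive definite quadratic form after all the integrations by parts --- conjugating a fourth-order operator generates far more cross terms than the second-order case --- while simultaneously keeping the $\Sigma$-boundary terms controllable under the Navier conditions $z=\Delta z=0$ (note $v=\theta z$ satisfies $v=0$ but \emph{not} $\Delta v=0$ on $\Sigma$) and arranging the It\^o corrections $\theta^2|Z|^2$ so that they carry a subleading weight. The duality set-up, the functional analysis behind $\mathcal J$, and the energy estimate are all routine by comparison.
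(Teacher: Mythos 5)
Your architecture is exactly the paper's: reduce Theorem \ref{thm.nullControllableOfForwardEquation} by duality to an observability estimate for the adjoint system \eqref{eq.adjointBackwardEquation} with observed quantities $z\chi_{G_0}$ and $a_3z+Z$ (this is Theorem \ref{thm.observabilityForBackwardEquation}), prove that estimate by a global Carleman inequality for the backward equation derived from a pointwise weighted identity for the conjugated stochastic fourth order operator, and close by localizing to $[T/4,3T/4]$ and running an It\^o energy estimate plus Gronwall back to $t=0$. Your Steps 1 and 3 are correct and coincide, up to the choice of weights ($t(T-t)$ with parameters $\lambda,\mu$ instead of the paper's $t^{1/2}(T-t)^{1/2}$ with $s,\lambda$, which only changes the $T$-dependence of the constant and is immaterial for null controllability at a fixed $T$), with what the paper does in Sections 1 and 4.

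The gap is Step 2, which is the entire substance of the result and is only asserted, and the mechanism you sketch for it would not go through as stated. Under the Navier conditions one has $\Delta w=2s\lambda\xi\,\frac{\partial\eta}{\partial\nu}\frac{\partial w}{\partial\nu}$ on $\Sigma$, and the divergence terms of the identity produce boundary integrals that are \emph{not} all of favorable sign: besides sign-indefinite cross terms, the decisive obstruction is a negative term proportional to $s^{3}\lambda^{3}\xi^{3}\bigl(\frac{\partial\eta}{\partial\nu}\bigr)^{3}\bigl|\frac{\partial^{2}w}{\partial\tau\partial\nu}\bigr|^{2}$, which the paper can only absorb through a trace/interpolation estimate in $H^{5/2}(\partial G)$ combined with a weighted $H^{4}(G)$ regularity estimate for the stochastic fourth order equation (Lemma \ref{lemma.auxiliaryEstimates}, itself resting on the Galerkin-type regularity Lemma in Appendix \ref{sec.lemmas}); nothing in your plan supplies this ingredient. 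Two further steps you pass over are also essential: the weighted identity only yields the $|\Delta h|^{2}$-level interior term, and the terms $s^{6}\lambda^{8}\xi^{6}|h|^{2}$, $s^{4}\lambda^{6}\xi^{4}|\nabla h|^{2}$ on the left are recovered by invoking a separate Carleman estimate for the Dirichlet Laplacian (Lemma \ref{lemma.LaplaceCarleman}); and to end up with only $|Z|^{2}$ (no derivatives of $Z$) on the right one must keep the positive It\^o correction containing $|\nabla^{2}g|^{2}$, interpolate the $|\nabla g|^{2}$ contribution between it and $|g|^{2}$, and conclude by a density argument because the $Z$ of \eqref{eq.adjointBackwardEquation} is merely $L^{2}$-valued. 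So the proposal is the right plan, identical in outline to the paper, but the decisive Carleman step (Theorems \ref{thm.fundamentalIndentity} and \ref{thm.carlemanBackward}) is missing, and the boundary-term argument as you describe it would fail.
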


\begin{remark}
Like in \cite{Tang2009}, due to the randomness, extra control is required in the diffusion term. Hence, the control of \eqref{eq.forwardEquation}  is  a pair of function $ (u, v) $.
It would be quite interesting to study the null controllability for system \eqref{eq.forwardEquation} with only one control in the drift.
\end{remark}

To prove the null controllability of \eqref{eq.forwardEquation}, we introduce the adjoint equation:
\begin{equation}\label{eq.adjointBackwardEquation}
    \left\{\begin{alignedat}{2}
        &d z-\Delta^2 z d t=-(a_{1} z+a_{2} Z) d t+Z d W(t) && \quad \text { in } Q, \\
        &z=\Delta z=0 && \quad \text { on } \Sigma, \\
        &z(T)=z_{T} && \quad \text { in } G.
    \end{alignedat}\right.
\end{equation}
By the classical wellposedness result for backward stochastic evolution equations, we know that  \eqref{eq.adjointBackwardEquation} admits a unique weak solution $(z,Z)\in \big(L^2_\dbF(\Omega;C([0,T];L^2(G))\times L^2_\dbF(0,T;H^2(G)\cap H_0^1(G))\big)\times L^2_\dbF(0,T;L^2(G))$ (e.g., \cite[Theorem 4.11]{Lue2021a}).

By a standard duality argument, Theorem \ref{thm.nullControllableOfForwardEquation} is implied by the following observability estimate (e.g., \cite[Theorem 7.17]{Lue2021a}).
\begin{theorem}\label{thm.observabilityForBackwardEquation}
    There exists a constant $ C>0 $ such that the solution $ (z,Z) $ to system \eqref{eq.adjointBackwardEquation} satisfies
    \begin{equation*} 
        |z(0)|_{L^{2}(G)} \leq C e^{C (1+r_{1}^{2})(T^{-1}+1)}
        (| z|_{L_{\mathbb{F}}^{2}(0, T ; L^{2}(G_{0}))}+|a_{3} z+Z|_{L_{\mathbb{F}}^{2}(0, T ; L^{2}(G))})
        , \;\, \forall  z_{T} \in L_{\mathcal{F}_{T}}^{2}(\Omega ; L^{2}(G)),
    \end{equation*}
    where $ r_{1} =
    |a_{1}|_{L^{\infty}_{\mathbb{F}}( 0,T;L^{\infty}(G) )}
    + |a_{2}|_{L^{\infty}_{\mathbb{F}}( 0,T;L^{\infty}(G) )}
    + |a_{3}|_{L^{\infty}_{\mathbb{F}}( 0,T;L^{\infty}(G) )} $.
\end{theorem}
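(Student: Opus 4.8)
The plan is to derive the observability estimate from a global Carleman estimate for the backward stochastic fourth order parabolic operator $dz - \Delta^2 z\, dt$. First I would fix a weight function: choose $\psi \in C^\infty(\overline G)$ with $\psi > 0$ in $G$, $\psi = 0$ on $\partial G$, and $|\nabla\psi| > 0$ on $\overline{G \setminus G_0'}$ for some open $G_0' \Subset G_0$ (such a $\psi$ exists by a classical construction, possibly after shrinking $G_0$). Then set, for large parameters $\lambda,\mu>0$,
\begin{equation*}
    \varphi(t,x) = \frac{e^{\mu\psi(x)}}{t(T-t)}, \qquad \theta = e^{\ell}, \qquad \ell = \lambda\varphi.
\end{equation*}
The heart of the argument is a pointwise ``fundamental identity'' for the operator $\theta(dz - \Delta^2 z\,dt)$ applied to $\theta^{-1}$ times a new variable; concretely, writing $v = \theta z$ one expands $\theta\Delta^2(\theta^{-1}v)$ and collects terms, producing an identity of the schematic form
\begin{equation*}
    \theta\big(dz - \Delta^2 z\,dt\big)\cdot(\text{first-order operator in }v) = d(\cdots) + \mathrm{div}(\cdots)\,dt + (\text{positive quadratic form in }v,\nabla v,\Delta v)\,dt + (\text{lower order})\,dt.
\end{equation*}
This is the fourth-order, stochastic analogue of the classical Carleman identity; deriving it and checking that the leading quadratic form is genuinely nonnegative (modulo boundary and lower-order terms absorbable for $\lambda,\mu$ large) is the technical core and, I expect, the main obstacle — the fourth-order spatial operator generates many more cross terms than the second-order case, and the Itô correction from $dz = \Delta^2 z\,dt - (a_1 z + a_2 Z)\,dt + Z\,dW$ contributes an extra term involving $|Z|^2$ and $\theta_t$ that must be controlled.

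Integrating this identity over $Q$ and taking expectations, the martingale term $d(\cdots)$ vanishes, the divergence terms yield boundary integrals over $\Sigma$ which vanish because of the boundary conditions $z = \Delta z = 0$ (one must check that the weight and its normal derivative behave correctly on $\partial G$, using $\psi = 0$ there), and one is left with
\begin{equation*}
    \dbE\int_Q \theta^2\Big(\lambda\mu^2\varphi|\nabla z|^2 + \lambda^3\mu^4\varphi^3|z|^2 + \cdots\Big)\,dx\,dt \le C\,\dbE\int_Q \theta^2|dz - \Delta^2 z\,dt|^2 + (\text{observation on }G_0).
\end{equation*}
Using the equation \eqref{eq.adjointBackwardEquation}, the right-hand source is $-(a_1 z + a_2 Z)\,dt + Z\,dW$, so after squaring, the drift part contributes $C r_1^2 \dbE\int_Q \theta^2(|z|^2 + |Z|^2)$, and the $|z|^2$ piece is absorbed by the left side for $\lambda$ large while the $|Z|^2$ piece is handled by rewriting $Z = (a_3 z + Z) - a_3 z$, so that $|Z|^2 \le 2|a_3 z + Z|^2 + 2r_1^2|z|^2$; the first term becomes part of the admissible observation $|a_3 z + Z|_{L^2_\dbF(0,T;L^2(G))}$ and the second is again absorbed. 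The stochastic integral $\dbE\int_Q \theta^2 Z\,dW$ needs the extra Itô term from expanding $d(\theta^2|z|^2)$-type quantities, which produces a controllable $\dbE\int_Q \theta^2|Z|^2$ contribution of the same form.

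Finally I would convert the weighted interior estimate into the stated bound on $|z(0)|_{L^2(G)}$ by a standard energy argument: from \eqref{eq.adjointBackwardEquation} the map $t \mapsto \dbE|z(t)|^2_{L^2(G)}$ satisfies a differential inequality (using $\langle \Delta^2 z, z\rangle = |\Delta z|^2 \ge 0$ and the bounds on $a_1,a_2$, plus the substitution for $Z$ as above) giving $\dbE|z(0)|^2 \le e^{Cr_1^2 T}\dbE|z(t)|^2$ for all $t$, hence $\dbE|z(0)|^2 \le \frac{C}{T}e^{Cr_1^2 T}\int_{T/4}^{3T/4}\dbE|z(t)|^2\,dt$; on the subinterval $[T/4,3T/4]$ the weight $\theta$ is bounded above and below by constants of the form $e^{\pm C\lambda(T^{-1}+1)}$, so this time-localized energy is dominated by the weighted left-hand side of the Carleman estimate, and tracking the accumulated constants $e^{C\lambda(\cdots)}$ together with the choice $\lambda \sim 1 + r_1^2$ (forced by the absorption steps) yields exactly the factor $Ce^{C(1+r_1^2)(T^{-1}+1)}$. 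Care with the blow-up of $\varphi$ as $t \to 0^+, T^-$ makes the boundary term at $t=0,T$ in the $d(\cdots)$ integration vanish, which is why no terminal data appears on the right.
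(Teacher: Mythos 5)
Your endgame matches the paper's Section \ref{section.ObserAndControl}: apply a Carleman estimate to the backward equation, absorb the $r_1^2$ zero-order terms by taking the large parameter of size $C(1+r_1^2)$, replace $Z$ by $(a_3z+Z)-a_3z$ so that only the admissible observation appears, and then combine a time-localized weighted estimate on $[T/4,3T/4]$ with an It\^o/Gronwall energy inequality to reach $|z(0)|_{L^2(G)}$. However, as a proof of the stated theorem the proposal has a genuine gap: the Carleman estimate itself is never established --- you explicitly defer ``deriving the identity and checking positivity'' as the main obstacle, but that is precisely the content the paper needs (the fundamental identity of Theorem \ref{thm.fundamentalIndentity} proved in Appendix \ref{App-id}, Proposition \ref{prop.estimatesOrder}, Lemmas \ref{lemma.LaplaceCarleman}--\ref{lemma.auxiliaryEstimates}, and all of Section \ref{section.carleman}). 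Moreover, two concrete steps in your sketch would fail. First, the boundary terms do \emph{not} vanish under $z=\Delta z=0$: for a fourth-order operator the divergence terms leave surface integrals in $\frac{\partial w}{\partial\nu}$, $\frac{\partial\Delta w}{\partial\nu}$, $\frac{\partial^2 w}{\partial\tau\partial\nu}$, and in $|\nabla^2 h|^2$, $|\nabla^3 h|^2$ on $\Sigma$; the paper must exploit the sign $\frac{\partial\eta}{\partial\nu}<0$, trace/interpolation inequalities, and the $H^4$-regularity estimate of Lemma \ref{lemma.auxiliaryEstimates} (Steps 1 and 4 of Section \ref{section.carleman}) to control them. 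Second, the It\^o corrections are not merely ``a controllable $\mathbb{E}\int\theta^2|Z|^2$'': since the identity differentiates $dh$ twice in space, terms in $\nabla Z$ and $\nabla^2Z$ appear, and they are handled only because the $|\nabla^2 g|^2$ correction has a favorable sign and the $|\nabla g|^2$ term can be interpolated away (cf. \eqref{eq.pfthm2.1.S13}--\eqref{eq.pfthm2.1.S18}); this is one of the genuinely new points compared with the deterministic proof you are implicitly mimicking.

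There is also a quantitative mismatch caused by your weight. With $\varphi=e^{\mu\psi}/(t(T-t))$, on $[T/4,3T/4]$ one has $\varphi\sim T^{-2}$, so $\theta$ is of size $e^{\pm C\lambda T^{-2}}$, not $e^{\pm C\lambda(T^{-1}+1)}$ as you assert; with the forced choice $\lambda\sim 1+r_1^2$ your argument yields an observability constant of the form $Ce^{C(1+r_1^2)(T^{-2}+1)}$, which is strictly weaker than the stated $Ce^{C(1+r_1^2)(T^{-1}+1)}$ for small $T$ and hence does not prove the theorem as formulated. The paper obtains the $T^{-1}$ dependence precisely because its weights \eqref{eq.alpha} carry the time factor $t^{1/2}(T-t)^{1/2}$ (with the parameter range $s\geq C(T^{1/2}+T)$), as emphasized in Remark 1.4; you would need to adopt this (or an equivalent) scaling for the final constant to come out right.
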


Here and in what follows, $C$ stands for a generic positive constant depending only on $ G $ and $ G_{0} $ whose value may vary from line to line.

\begin{remark}
    In \cite{Tang2009}, the authors obtain observability estimates for second order stochastic parabo\-lic equation.
    The constant  in their observability inequality is  $ Ce^{CT^{-4}} $ when $ T $ is small. In our case, it is $ Ce^{CT^{-1}} $.
\end{remark}

Controllability problems for deterministic
parabolic  equations are extensively studied.
Particularly, we refer readers to
\cite{Carreno2016,Cerpa2011,Cerpa2017,
Cerpa2015,Gao2016,Guerrero2019,Guzman2020,Zhou2012} for
controllability problems of fourth order
parabolic equations. On the other hand, although
controllability problems for stochastic PDEs are
not extensively studied, some progresses are
obtained in recent years (see
\cite{Gao2015,Liu2014, Lue2021b,
Lue2021a,Tang2009} and the rich references
therein). However, to the best of our knowledge,    
\cite{Gao2015} is the only published work addressing null controllability of  fourth order stochastic  parabolic equations, in which the authors establish the null controllability of \eqref{eq.forwardEquation} when $G$ is an interval.

We will use Carleman estimates to prove Theorem \ref{thm.observabilityForBackwardEquation}, which is deduced taking inspiration from the works \cite{Gao2015,Guerrero2019,Tang2009}.  
To state our Carleman estimate, we first introduce the weight functions:
\begin{equation}\label{eq.alpha}
	\alpha(x, t)=\frac{e^{\lambda(|2 \eta|_{C(\overline{G})}+\eta(x))}-e^{4 \lambda|\eta|_{C(\overline{G})}}}{t^{1/2}(T-t)^{1/2}}, \quad \xi(x, t)=\frac{e^{\lambda(|2 \eta|_{C(\overline{G})}+\eta(x))}}{t^{1/2}(T-t)^{1/2}},
\end{equation}
where $ \eta $ satisfies:
\begin{equation}\label{eq.eta}
	\eta \in C^{4}(\overline{G}),\quad  \eta >0 \text{ in } G, \quad \eta|_{{\partial G}}=0, \quad|\nabla \eta| \geq C>0 \text { in } \overline{ G \backslash G_{1}}
\end{equation}
with $ G_{1} \subset  \overline{G_{1}} \subset G_{0} $ an open set.
For the existence of $ \eta $, see \cite[Lemma 1.1]{Fursikov1996}.
We remark that our weight functions are the same as that in \cite{Guerrero2019}, except for a minus sign.

Let $ \theta=e^{\ell} $, $ \ell=s\alpha $. We have the following Carleman estimate  for \eqref{eq.adjointBackwardEquation}.

\begin{theorem}\label{thm.carlemanBackward}
	There exists a constant $C>0$  such that for all  $z_{T} \in L_{\mathcal{F}_{T}}^{2}(\Omega ; L^{2}(G))$, the weak solution $ (z(\cdot),Z (\cdot) )$ to \eqref{eq.adjointBackwardEquation} satisfies that
	\begin{align}
		\notag
		&\mathbb{E} \int_{Q}  (s^{6} \lambda^{8}  \xi^{6} \theta^{2}| z |^{2}+s^{4} \lambda^{6}  \xi^{4} \theta^{2}|\nabla z |^{2} +s^{3} \lambda^{4}  \xi^{3} \theta^{2}|\Delta z |^{2} + s^{2} \lambda^{4}  \xi^{2} \theta^{2}| \nabla^{2} z |^{2}   ) d x d t
		\\\label{eq.carlemanBackward}
		&      \leq
		C  \mathbb{E} \int_{Q_{0}} s^{7} \lambda^{8} \xi^{7}\theta^{2}| z |^{2} d x d t
		+C e^{C \lambda |\eta|_{C(\overline{G})}} \mathbb{E} \int_{Q} s^{4} \lambda^{4} \xi^{4} \theta^{2} | Z |^{2} dxdt
		+ C r_{1}^{2} \mathbb{E} \int_{Q}\theta^{2} ( | z |^{2} + |Z|^{2} ) d x d t
	\end{align}
	for $ \lambda \geq C $ and $ s\geq C(T^{1/2}+T) $.
\end{theorem}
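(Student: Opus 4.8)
The plan is to establish \eqref{eq.carlemanBackward} by the weighted pointwise identity method for parabolic Carleman estimates, adapted to the biharmonic principal part and to the backward stochastic structure; the deterministic skeleton is that of \cite{Guerrero2019} (whose weights we borrow up to a sign) and the treatment of the It\^o terms follows the philosophy of \cite{Tang2009,Gao2015}. Write the adjoint equation as $dz-\Delta^2z\,dt=F\,dt+Z\,dW(t)$ with $F=-(a_1z+a_2Z)$, fix $\theta=e^{\ell}$, $\ell=s\alpha$, and put $v=\theta z$, so that $dv=\ell_tv\,dt+\theta\,dz+\theta Z\,dW(t)$. Expanding $\theta\Delta^2(\theta^{-1}v)$ yields a polynomial in $v$ and its spatial derivatives up to order four with coefficients built from $\nabla^j\ell$; grouping this into a formally self-adjoint block and a formally skew-adjoint block (a decomposition that can be organized with the help of the factorization $\Delta^2=\Delta\circ\Delta$) and adjoining the contribution of $\ell_t v$, one derives a pointwise identity of the schematic form
\begin{equation*}
	\theta\big(dz-\Delta^2z\,dt\big)\,\cM v+\operatorname{div}V\,dt+dE=\big(\text{sum of nonnegative quadratic forms in }v\big)\,dt+\cR\,dt+dN,
\end{equation*}
where $\cM v$ is a suitable differential combination of $v$ and its spatial derivatives, $V$ and $E$ are ``boundary'' expressions, $N$ is a local martingale, and $\cR$ collects remainder terms of strictly lower order in $(s,\lambda,\xi)$. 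The crux is to tune the weights so that, after the spatial integrations by parts to come, the nonnegative forms become constant multiples of $s^6\lambda^8\xi^6|v|^2$, $s^4\lambda^6\xi^4|\nabla v|^2$, $s^3\lambda^4\xi^3|\Delta v|^2$ and $s^2\lambda^4\xi^2|\nabla^2v|^2$; constructing this identity with exactly these powers is the ``new fundamental identity'' announced in the abstract.

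Next I would integrate this identity over $Q$ and take mathematical expectation, having first proved the estimate for sufficiently regular data and then extending by a density argument. The $dN$ term then has zero expectation; the $dE$ term integrates in time to contributions at $t=0$ and $t=T$, which vanish because $\theta=e^{s\alpha}$ together with all the algebraic weights decay to $0$ as $t\to0^+$ and $t\to T^-$ (here $\alpha<0$ while $\xi>0$ and $\xi\to+\infty$ at both endpoints); and the divergence term integrates in space to a boundary integral over $\Sigma$ in which, after invoking $z=\Delta z=0$ on $\Sigma$, only the normal traces $\partial_\nu z$ and $\partial_\nu\Delta z$ remain. Since $\eta|_{\partial G}=0$ with $\eta>0$ inside forces $\partial_\nu\eta\le-C<0$ on $\partial G$, the top-order boundary term carries the favorable sign and is discarded, the remaining boundary contributions being of lower order. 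As for the It\^o corrections: the only exact It\^o differential that is formed is of $|v|^2$-type, whose quadratic variation is $\theta^2|Z|^2\,dt$ up to a constant, while every other occurrence of $dz$ is kept in split form ``drift$\,\cdot\,dt$ plus martingale$\,\cdot\,dW$'', the drift being processed purely by spatial integration by parts and the martingale having zero expectation. This organization is exactly what prevents any term in $\nabla Z$ (undefined for the solutions at hand) from arising, and it produces the right-hand side contribution $Ce^{C\lambda|\eta|_{C(\overline G)}}\mathbb E\int_Q s^4\lambda^4\xi^4\theta^2|Z|^2\,dx\,dt$.

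It then remains to dispose of the remainder $\mathbb E\int_Q\cR\,dt$ and to localize. The remainder splits into terms of lower polynomial degree in the large parameters, absorbed by first fixing $\lambda\ge C$ and then $s\ge C(T^{1/2}+T)$ — the $T$-powers entering through the bounds on $\alpha_t$ and $\alpha_{tt}$ in terms of $\xi$ and $T$, which is precisely why this threshold on $s$ is the relevant one — together with the zeroth-order terms involving $a_1,a_2,a_3$, which contribute exactly $Cr_1^2\mathbb E\int_Q\theta^2(|z|^2+|Z|^2)\,dx\,dt$. What survives on the left is then the sum of the four good quantities weighted by $|\nabla\eta|^2$, which \eqref{eq.eta} bounds below only on $\overline{G\setminus G_1}$; the leftover piece over $(0,T)\times G_1\subset Q_0$ must be paid for by a local term. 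Taking $\zeta\in C_0^\infty(G_0)$ with $0\le\zeta\le1$ and $\zeta\equiv1$ on $G_1$, one bounds the local higher-order quantities (in $|\nabla^2z|^2$, $|\Delta z|^2$ and $|\nabla z|^2$) over $(0,T)\times G_1$ by the corresponding integrals over $Q_0$ carrying the cutoff $\zeta$, integrates by parts to transfer the derivatives onto $\zeta$, $\theta$ and the weights, and uses the Cauchy--Schwarz inequality to reabsorb small multiples of the global $|\nabla z|^2$ and $|z|^2$ terms into the left-hand side; the iterated cutoff and integration-by-parts procedure raises the power of $s\lambda\xi$ by just the right amount to leave the single local term $C\mathbb E\int_{Q_0}s^7\lambda^8\xi^7\theta^2|z|^2\,dx\,dt$. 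Finally, passing back from $v$ to $z$ through the Leibniz rule $\nabla v=\theta(\nabla z+z\nabla\ell)$ and its higher-order analogues — so that the weighted $v$-norms and the weighted $z$-norms differ only by terms of lower order in $(s,\lambda,\xi)$, already absorbed — turns the resulting estimate into \eqref{eq.carlemanBackward}.

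The principal obstacle is the first step: producing the fourth-order weighted pointwise identity with the weights calibrated so that, after all the integrations by parts, every cross term lands among the four designated nonnegative quantities with exactly the stated powers of $(s,\lambda,\xi)$ or is demonstrably of lower order — a long and unforgiving computation with many competing terms, substantially heavier than in the classical second-order case. The two secondary difficulties are verifying that the top-order boundary integral on $\Sigma$ has the correct sign, and arranging the It\^o bookkeeping so that the quadratic-variation corrections are controlled by $\theta^2|Z|^2$ alone, with no spatial derivative of $Z$ ever appearing.
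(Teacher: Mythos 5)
Your sketch reproduces the general philosophy of the paper (weighted pointwise identity for the fourth order stochastic operator, integration over $Q$, expectation, sign analysis of boundary terms, absorption of lower order terms, cutoff localization), but the two places where you simply assert success are exactly the places where the real work lies, and one of your structural claims is not tenable as stated. You claim the It\^o bookkeeping can be organized so that ``the only exact It\^o differential that is formed is of $|v|^2$-type'' with quadratic variation $\theta^2|Z|^2$, so that no spatial derivative of $Z$ ever appears. For the biharmonic operator this cannot be arranged in the way you describe: the multiplier $I$ must contain second order pieces of the type $\Psi_2\Delta w$ and $\Psi_3^{ij}w_{x_ix_j}$ (they are needed to produce the positive $|\Delta w|^2$ and Hessian terms), and pairing them with $-dw$ forces one spatial derivative onto $dw$, which after It\^o's formula generates quadratic-variation terms of the form $-\theta^{2}\Psi_{2}|d\nabla h+\nabla\ell\,dh|^{2}$ and $-4s^{2}\lambda^{2}\xi^{2}\theta^{2}|\nabla\eta\, d\nabla h+\cdots|^{2}$, i.e. \emph{negatively signed} terms in $|\nabla Z|^2$ with weight $s^2\lambda^2\xi^2\theta^2$, together with a positively signed term in $|\nabla^2 Z|^2$. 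In the paper these are dealt with by first proving the estimate for diffusion terms $g\in L^2_{\mathbb F}(0,T;H^2(G))$, using the elementary inequality $\tfrac13a^2\le(a+b+c)^2+b^2+c^2$ and a spatial interpolation argument to absorb the bad $s^2\lambda^2\xi^2\theta^2|\nabla g|^2$ contribution into the good $\theta^2|\nabla^2 g|^2$ quadratic variation plus $s^4\lambda^4\xi^4\theta^2|g|^2$ — this interpolation is precisely the origin of the factor $e^{C\lambda|\eta|_{C(\overline G)}}$ in front of the $|Z|^2$ term — and then concluding by a density argument since $Z$ is only $L^2$-valued. Your scheme, if it worked as announced, would produce $s^4\lambda^4\xi^4\theta^2|Z|^2$ \emph{without} that exponential factor; the fact that you insert the factor anyway indicates the corrections have not been traced.

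The remaining gaps are of the same nature: you assert that the identity can be tuned so that all four target quantities $s^6\lambda^8\xi^6|v|^2$, $s^4\lambda^6\xi^4|\nabla v|^2$, $s^3\lambda^4\xi^3|\Delta v|^2$, $s^2\lambda^4\xi^2|\nabla^2v|^2$ appear directly as nonnegative forms, whereas in fact (already in the deterministic case of Guerrero--Kassab, and in this paper) the identity yields a good $|\Delta w|^2$-type term and $s^7\lambda^8\xi^7|\nabla\eta|^8w^2$ but also a negative term $-16s^5\lambda^6\xi^5|\nabla\eta|^6|\nabla w|^2$; the $|z|^2$ and $|\nabla z|^2$ blocks on the left are recovered \emph{afterwards} by invoking the elliptic Carleman estimate for the Laplacian (Lemma \ref{lemma.LaplaceCarleman} and Remark \ref{remark.LaplaceCarleman}) with $\Delta h$ as source, and the $|\nabla^2 z|^2$ block by elliptic regularity applied to $\tilde h=s\lambda^2\xi e^{s\alpha}h$. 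Likewise the boundary terms are not ``favorable or lower order'': they contain same-order mixed terms such as $-4s^3\lambda^3\xi^3(\partial_\nu\eta)^3\,\partial_\nu w\,\partial_\nu\Delta w$ and a tangential--normal second derivative $\bigl|\partial^2 w/\partial\tau\partial\nu\bigr|^2$, whose control requires a trace interpolation and the weighted $H^4$ estimate of Lemma \ref{lemma.auxiliaryEstimates} (itself proved by a Galerkin argument in Appendix \ref{sec.lemmas}); none of these auxiliary tools appears in your plan, and you yourself defer both the fundamental identity and the boundary sign analysis as ``obstacles''. As it stands, the proposal is a roadmap whose decisive steps — the identity, the treatment of the stochastic correction terms, the boundary estimate, and the recovery of the lower-order blocks — are missing or incorrectly anticipated.
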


\begin{remark}
Theorem \ref{thm.carlemanBackward} can be regarded as a generalization of \cite[Theorem 1.2]{Guerrero2019} for stochastic equations. Due to the appearance of the diffusion term, we cannot simply mimic the proof of \cite[Theorem 1.2]{Guerrero2019} to prove Theorem \ref{thm.carlemanBackward}.  
\end{remark}

\begin{remark}
	
	From the proof of Theorem \ref{thm.carlemanBackward}, it is not difficult to see that Carleman inequality \eqref{eq.carlemanBackward} is still valid when $ z $ satisfies the boundary condition
	\begin{equation}\label{eq.01BC}
		z=\frac{\partial z}{\partial \nu}=0 \  \text{on} \  \Sigma,
	\end{equation}
	where  $\nu(x)=(\nu^{1}(x), \cdots, \nu^{n}(x))$ stands for the unit outward normal vector (of $G$) at $x \in \partial G$. However, the situation for
	\begin{equation}\label{eq.otherBC}
		\frac{\partial z}{\partial \nu}=\frac{\partial \Delta z}{\partial \nu}=0 \  \text{on} \  \Sigma
	\end{equation}
	is more difficult.  As far as we know, it is still an open problem for the validity of Carleman estimates for deterministic fourth order parabolic operator under the boundary conditions \eqref{eq.otherBC} in the multi-dimensional case.
	
\end{remark}

The rest of this paper is organized as follows.
 In Section \ref{section.weightedIndentity}, we give some preliminaries.
The Carleman  estimate  \eqref{eq.carlemanBackward} is proved in Section \ref{section.carleman}. In Section \ref{section.ObserAndControl}, we prove Theorem \ref{thm.observabilityForBackwardEquation}.
Finally, we finished our article with appendices in which we prove the weighted identity and lemmas that are important in deriving the Carleman estimate.

\section{Some preliminary results}\label{section.weightedIndentity}

This section is devoted to presenting  some preliminary results for the proof of Theorem \ref{thm.carlemanBackward}. 

We first give
a pointwise weighted identity for the (formal) stochastic partial differential operator  $- dh+\Delta^{2} h dt$.

In what follows, for simplicity, we will use the notation $y_{x_{i}} \equiv y_{x_{i}}(x)=\partial y(x) / \partial x_{i}$, where $x_{i}$ is the $i$-th coordinate of a generic point $x=(x_{1}, \ldots, x_{n})$ in $\mathbb{R}^{n} $.

Recall that $ \theta=e^{\ell} $ and $ \ell=s\alpha $. Set $ w=\theta h $.
Put
\begin{equation}\label{eq.pfthmFI.3}
    I = \Delta^{2} w  + \Psi_{2}\Delta w + \sum_{i, j=1}^n  \Psi_{3} ^{ij} w_{x_{i}x_{j}} + \sum_{i=1}^n \Psi_{4}^{i}  w_{x_{i}}+ \sum_{i=1}^n \Psi_{5}^{i}  w_{x_{i}}  + \Psi_{6} w,\quad I_{1}= Idt,
\end{equation}
where for $ i,j=1,\cdots,n $,
\begin{align}
    \notag
    & \Psi_{2} = 2 s^{2}\lambda^{2}\xi^{2} |\nabla\eta|^{2},
    \quad
    \Psi_{3}^{ij} = 4 s^{2}\lambda^{2}\xi^{2} \eta_{x_{i}} \eta_{x_{j}},
    \quad
    \Psi_{4}^{i} = 12 s^{2}\lambda^{3}\xi^{2} |\nabla\eta|^{2} \eta_{x_{i}}  +4 s^{2} \lambda^{2} \xi^{2} \Delta \eta \eta_{x_{i}} ,
    \\ \label{eq.pfthmFI.3.1}
    &
    \Psi_{5}^{i} =  \sum_{j=1}^n 8 s^{2}\lambda^{2}\xi^{2} \eta_{x_{i}x_{j}} \eta_{x_{j}},
    \quad
    \Psi_{6} = s ^{4}\lambda ^{4}\xi^{4} |\nabla \eta|^{4}.
\end{align}
Let
\begin{equation}\label{eq.pfthmFI.4}
    I_{2}  =    \sum_{i=1}^n \Phi_{1}^{i} \Delta w_{x_{i}} dt + \Phi_{2} \Delta w dt + \sum_{i, j=1}^n \Phi_{3}^{ij}  w_{x_{i}x_{j}} dt + \sum_{i=1}^n \Phi_{4} ^{i}  w_{x_{i}} dt + \Phi_{5} w dt
        - dw,
\end{equation}
where for $ i,j=1,\cdots,n $,
\begin{align}
    \notag
& \Phi_{1}^{i} = -4 s \lambda \xi  \eta_{x_{i}},
\quad
\Phi_{2} =  -2 s\lambda^{2}\xi |\nabla \eta|^{2} - 2s\lambda\xi  \Delta \eta,
\quad
\Phi_{3}^{ij} = 4 s\lambda\xi  \eta_{x_{i}x_{j}} - 4 s\lambda^{2}\xi \eta_{x_{i}} \eta_{x_{j}},
\\ \label{eq.pfthmFI.4.1}
&
\Phi_{4}^{i} = -4 s^{3} \lambda^{3} \xi^{3} |\nabla\eta|^{2} \eta_{x_{i}} ,\quad \Phi_{5} =  - 6 s^{3} \lambda ^{4}\xi^{3} |\nabla\eta|^{4} - 12 s ^{3}\lambda ^{3}\xi^{3}  (\nabla^{2}\eta \nabla \eta \nabla \eta )  -2 s^{3} \lambda ^{3}\xi^{3} |\nabla\eta|^{2} \Delta \eta.
\end{align}
Put
\begin{align}
    \notag
    I_{3} & = -8 s \lambda \xi \sum\limits_{i,j=1}^{n} \eta_{x_{i}x_{j}} w_{x_{i}x_{j}}dt -4\nabla  \Delta \ell \cdot \nabla w  dt + 4  (\nabla \ell \cdot \nabla \Delta \ell ) w dt
        +2 |\nabla^{2} \ell|^{2} wdt
        \\\label{eq.pfthmFI.5}
        & \quad
        -\Delta^{2} \ell wdt+|\Delta \ell|^{2} wdt+ 8 s^{3} \lambda^{3} \xi^{3}(\nabla^{2} \eta \nabla \eta \nabla \eta) wdt
        + s \alpha_{t} w dt.
\end{align}

We have the following fundamental identity.

\begin{theorem}\label{thm.fundamentalIndentity}
    Let $ h $ be an $ H^{4}(G) $-valued It\^o process. Set $ \theta=e^{\ell} $, $ \ell=s\alpha $ and $ w=\theta h $, the function $ \alpha, \xi, $ and $ \eta $ being as in \eqref{eq.alpha} and \eqref{eq.eta}. Then, for any $ t\in[0,T] $ and a.e. $ (x,\omega) \in G \times \Omega $,
    \begin{align}
        \notag
        & 2 \theta I ( \!- dh \!+\! \Delta^{2} h dt )
        \!+\! 2  \sum_{i, j=1}^n (
        w_{x_{i}x_{i}x_{j}} dw \!-\! w_{x_{i}x_{j}}dw_{x_{i}}
        \!+\!\Psi_{2} w_{x_{i}} \delta_{ij} dw
        \!+\!\Psi_{3}^{ij} w_{x_{i}} dw
        )_{x_{j}}
        \!-\! 2 \operatorname{div} (V_{1}\!+\!V_{2}) dt
        \\ \label{eq.finalEquation}
        & =
        2 I^{2} dt
        + 2 II_{3}
        + 2 Mdt
        + 2 \sum_{i,j,k,p=1}^n \Lambda^{ijkp}_{1} w_{x_{i}x_{j}}w_{x_{k}x_{p}}dt
        + 2 \sum_{i, j=1}^n \Lambda_{2}^{ij} w_{x_{i}} w_{x_{j}}dt
        + 2 \Lambda_{3} w^{2}dt
        \\ \notag
        & \quad -  2 d \Big(
        \frac{1}{2} |\nabla^{2} w|^{2}
        -\frac{1}{2} \Psi_{2} |\nabla w |^{2}
        - 2 s^{2} \lambda^{2} \xi^{2} |\nabla \eta \nabla w|^{2}
        + \frac{1}{2} \Psi_{6} w^{2}
        \Big)
        - 4  s^{2}\lambda^{2}\xi^{2} \theta^{2} | \nabla \eta d \nabla h\! +\! \nabla \eta \nabla \ell dh|^{2}
        \\
        &\notag \quad
        + \theta^{2} \sum_{i, j=1}^n [  d h_{x_{i}x_{j}} + 2 \ell_{x_{j}} dh_{x_{i}} +   ( \ell_{x_{i}x_{j}} + \ell_{x_{i}} \ell_{x_{j}} ) dh ]^{2}
        -  \theta^{2}  \Psi_{2} | d \nabla h + \nabla \ell dh|^{2}
        +  \Psi_{6} \theta^{2} ( dh )^{2},
    \end{align}
    where
    \begin{align*}
        V_{1} & =[ V_{1}^{1}, \cdots,  V_{1}^{j},\cdots, V_{1}^{n} ],
        \quad
        V_{2} = [ V_{2}^{1}, \cdots,  V_{2}^{j},\cdots, V_{2}^{n} ],
        \\
        V_{1}^{j} & =  \sum_{i,k=1}^n\Big[ \sum_{p=1}^n \Phi_{1}^{p} w_{x_{k}x_{k}x_{p}} w_{x_{i}x_{i}x_{j}}  -  \frac{1}{2} \sum_{p=1}^n  \Phi_{1}^{j} w_{x_{k}x_{k}x_{p}} w_{x_{i}x_{i}x_{p}}
            + \frac{1}{2}  \Psi_{2} \Phi_{1}^{j}w_{x_{i}x_{i}}w_{x_{k}x_{k}}
          \\
          &  \quad \quad \quad \ \  
           + \sum_{p=1}^{n} \Psi_{3}^{ik}\Phi_{1}^{p} w_{x_{i}x_{k}}w_{x_{p}x_{j}}
           -  \frac{1}{2}  \sum_{p=1}^{n}\Psi_{3}^{ij}\Phi_{1}^{p} w_{x_{i}x_{k}}w_{x_{k}x_{p}}
            + \Phi_{4}^{k} w_{x_{i}x_{i}x_{j}} w_{x_{k}}
            - \Phi_{4}^{k} w_{x_{i}x_{j}} w_{x_{i}x_{k}}
            \\
            & \quad \quad \quad \ \  
            + \frac{1}{2} \Phi_{4}^{j}w_{x_{i}x_{k}}^{2}
            +\Big ( \Psi_{2} \Phi_{4}^{k} \delta_{ij}
            - \frac{1}{2} \Psi_{2} \Phi_{4}^{j} \delta_{ik}
            - \frac{1}{2} \Psi_{6} \Phi_{1}^{j} \delta_{ik}
            + \Psi_{3}^{ij}\Phi_{4}^{k} \Big ) w_{x_{i}}w_{x_{k}}\Big],
        \\
        V_{2}^{j}  &=
        \sum_{i,k,p,r,q=1}^n \Theta_{1}^{ijkprq} w_{x_{i}x_{k}x_{p}}w_{x_{r}x_{q}}
        + \sum_{i,k,p,r=1}^n \Theta_{2}^{ijkpr} w_{x_{i}x_{k}}w_{x_{p}x_{r}}
        + \sum_{i,k,p=1}^n \Theta_{3}^{ijkp} w_{x_{i}x_{k}}w_{x_{p}}
        \\
        & \quad
        + \sum_{i,k=1}^n \Theta_{4}^{ijk} w_{x_{i}}w_{x_{k}}
        + \sum_{i=1}^n \Theta_{5} w_{x_{i}x_{i}x_{j}}w
        + \sum_{i,k=1}^n \Theta_{6}^{ijk} w_{x_{i}x_{k}}w
        + \sum_{i=1}^n \Theta_{7}^{ij} w_{x_{i}}w
        +  \Theta_{8}^{j} w^{2},
    \end{align*}
    \begin{align*}
        M & =
        8 s \lambda^{2}\xi   | \nabla \Delta w \cdot \nabla \eta  | ^{2} 
        + 32 s^{3} \lambda^{4} \xi^{3}  | \nabla^{2} w  \nabla \eta \nabla \eta   |^{2} 
        + 48 s^{3} \lambda^{3} \xi^{3} \nabla^{2} \eta  (\nabla^{2} w \nabla \eta )  ( \nabla^{2} w \nabla \eta  ) 
        \\
        & \quad
        + 16 s^{3} \lambda^{3} \xi^{3}  ( \nabla^{2} w \nabla \eta \nabla \eta  ) \sum\limits_{i,j=1}^{n} \eta_{x_{i}x_{j}} w_{x_{i}x_{j}} 
        -16 s^{3} \lambda ^{4} \xi ^{3} |\nabla \eta|^{2}  | \nabla^{2} w \nabla \eta  | ^{2}       
        \\
        & \quad +6 s^{3} \lambda^{4} \xi ^{3} |\nabla \eta |^{4} | \nabla^{2} w|^{2} 
        + 4 s^{3} \lambda^{3} \xi ^{3}  ( \nabla^{2} \eta \nabla \eta \nabla \eta  ) |\nabla ^{2} w|^{2} 
        + 2  s^{3} \lambda^{3} \xi ^{3} |\nabla \eta|^{2} \Delta \eta |\nabla^{2} w|^{2}         
        \\
        & \quad+ 2 s^{3} \lambda^{4} \xi ^{3} |\nabla \eta |^{4} | \Delta w|^{2} 
        - 4 s^{3} \lambda^{3} \xi ^{3}  ( \nabla^{2} \eta \nabla \eta \nabla \eta  ) |\Delta w|^{2} 
        - 2  s^{3} \lambda^{3} \xi ^{3} |\nabla \eta|^{2} \Delta \eta |\Delta w|^{2}       
        \\
        & \quad+ 40 s^{5} \lambda^{6} \xi^{5} |\nabla \eta|^{4} | \nabla w \cdot \nabla \eta|^{2} 
        + 64 s^{5} \lambda ^{5} \xi^{5}  ( \nabla^{2} \eta \nabla \eta \nabla \eta  ) |\nabla w \cdot \nabla \eta|^{2}       
        \\
        & \quad- 16 s^{5} \lambda^{6} \xi^{5} |\nabla \eta|^{6} |\nabla  w|^{2} 
        + 8 s^{7} \lambda ^{8} \xi^{7} |\nabla \eta|^{8} w^{2} - \sum_{i,k,j=1}^n ( \Psi_{5}^{i} \Phi_{1}^{k} )_{x_{j}} w_{x_{i}} w_{x_{k}x_{j}},
    \end{align*}
    \begin{align*}
    & \Lambda_{1}^{ijkp} =
    \sum_{r=1}^{n} \Big (  \frac{1}{2} \Phi_{2x_{r}x_{r}} \delta_{ij}\delta_{kp}
    - \Phi_{3x_{r}x_{r}}^{kp} \delta_{ij}
    + \sum_{q=1}^{n}\frac{1}{2} \Phi_{3x_{r}x_{q}}^{rq} \delta_{ij}\delta_{kp}
    + 2\Phi_{3x_{i}x_{j}}^{kp}
    - 2 \Phi_{3x_{j}x_{r}}^{kr} \delta_{ip}
    + \Phi_{3x_{r}x_{r}}^{jk} \delta_{ip}\Big ),
    \\
   & \Lambda_{2}^{ij}
        =
    \begin{aligned}[t]
        \sum_{k=1}^n\!\Big [ & - \Phi_{4 x_{i}x_{k}x_{k}}^{j}
    \!+\! \frac{1}{2} \Phi_{4x_{i}x_{j}x_{k}}^{k}
    \!\!- 2\Phi_{5x_{i}x_{j}}
    \!\!+  (\Psi_{2}\Phi_{3}^{jk} )_{x_{i}x_{k}}
    \!\!- \frac{1}{2}  (\Psi_{2}\Phi_{3}^{ij} )_{x_{k}x_{k}}
    \!\!- \sum_{p=1}^{n} \frac{1}{2}   (\Psi_{2}\Phi_{3}^{kp} \delta_{ij} )_{x_{k}x_{p}}
    \\
    &
    +   (\Psi_{3}^{ik}\Phi_{2}  )_{x_{k}x_{j}}
    - \sum_{p=1}^{n} \frac{1}{2}  (\Psi_{3}^{kp}\Phi_{2} \delta_{ij} )_{x_{k}x_{p}}
    - \frac{1}{2}  (\Psi_{3}^{ij}\Phi_{2}  )_{x_{k}x_{k}}
    + \sum_{p=1}^{n}  (\Psi_{3}^{ik}\Phi_{3}^{jp} )_{x_{k}x_{p}}
    \\
    &
    - \sum_{p=1}^{n} \frac{1}{2}  (\Psi_{3}^{ij}\Phi_{3}^{kp} )_{x_{k}x_{p}}
    - \frac{1}{2}  (\Psi_{3}^{kp}\Phi_{3}^{ij} )_{x_{k}x_{p}}
    + \frac{1}{2}  (\Psi_{4}^{i}\Phi_{1}^{j} )_{x_{k}x_{k}}
    +  (\Psi_{4}^{i}\Phi_{2} )_{x_{j}}
    + \frac{1}{2}  (\Psi_{4}^{k}\Phi_{2}  \delta_{ij} )_{x_{k}}
    \\
    &
    -  (\Psi_{4}^{i}\Phi_{3}^{jk} )_{x_{k}}
    + \frac{1}{2}  (\Psi_{4}^{k}\Phi_{3}^{ij} )_{x_{k}}
    +  (\Psi_{5}^{i}\Phi_{2} )_{x_{j}}
    + \frac{1}{2}  (\Psi_{5}^{k}\Phi_{2}  \delta_{ij} )_{x_{k}}
    -  (\Psi_{5}^{i}\Phi_{3}^{jk} )_{x_{k}}
    + \frac{1}{2}  (\Psi_{5}^{k}\Phi_{3}^{ij} )_{x_{k}}
    \\
    &
    - \frac{1}{2}  \Psi_{2t} \delta_{ij}
    - 4  s^{2}\lambda^{2}\xi ( \xi \eta_{x_{j}} )_{t} \eta_{x_{i}}
    \Big ],
    \end{aligned}
    \\
      &   \Lambda_{3} =
        \sum_{i=1}^n \Big[  \sum_{j=1}^n \frac{1}{2} \Phi_{5x_{i}x_{i}x_{j}x_{j}}
        + \sum_{j=1}^n \frac{1}{2}   ( \Psi_{2} \Phi_{5}  )_{x_{j}x_{j}}
        + \sum_{j=1}^n \frac{1}{2} (\Psi_{3}^{ij} \Phi_{5})_{x_{i}x_{j}}
        - \frac{1}{2}   ( \Psi_{4} ^{i} \Phi_{5}  )_{x_{i}}
        - \frac{1}{2}   ( \Psi_{5} ^{i} \Phi_{5}  )_{x_{i}}
        \\
        &  \quad \quad \quad \quad
        - \sum_{j=1}^n \frac{1}{2} ( \Psi_{6} \Phi_{1}^{i}  )_{x_{i}x_{j}x_{j}}
        + \frac{1}{2}  ( \Psi_{6} \Phi_{2}  )_{x_{i}x_{i}}
        + \frac{1}{2}   \Psi_{6t}   \Big]
    \end{align*}
    and
    \begin{align*}
     &   \Theta_{1}^{ijkprq} =\Phi_{2} \delta_{ik}\delta_{pj}\delta_{rq} + \Phi_{3}^{rq}\delta_{ik}\delta_{pj}- \Phi_{3}^{ik}\delta_{rq}\delta_{pj}+\Phi_{3}^{ij}\delta_{kp}\delta_{rq},                                                                                        \\
     &   \Theta_{2}^{ijkpr} =
        -\frac{1}{2}\Phi_{2x_{j}} \delta_{ik}\delta_{pr}
        + \Phi_{3x_{j}}^{pr}\delta_{ik}
        -\frac{1}{2}\sum_{q=1}^{n} \Phi_{3x_{q}}^{jq}\delta_{ik}\delta_{pr}
        -2 \Phi_{3x_{i}}^{pr}\delta_{kj}
        + 2\Phi_{3x_{k}}^{rj}\delta_{ip}
        - \Phi_{3x_{j}}^{kr}\delta_{ip},
        \\
      &   \Theta_{3}^{ijkp} =\!
        - \Phi_{4x_{i}}^{p}\delta_{kj}\!
        - \Phi_{5}\delta_{ik}\delta_{pj}\!
        + \Psi_{2}\Phi_{3}^{ik}\delta_{pj}\!
        -\Psi_{2}\Phi_{3}^{ij}\delta_{kp}\!
        + \Psi_{3}\Phi_{2}\delta_{ik}\!
        -\Psi_{3}\Phi_{2}\delta_{ij}\!
        +\Psi_{3}^{ik}\Phi_{3}^{pj}\!
        -\Psi_{3}^{ij}\Phi_{3}^{kp}\!
        \\
        & \qquad\quad\;
        +\Psi_{4}^{p}\Phi_{1}^{k}\delta_{ij}
        +\Psi_{5}^{p}\Phi_{1}^{k}\delta_{ij},
        \\
       &   \Theta_{4}^{ijk}=
        \Phi_{4x_{i}x_{j}}^{k}\!
        - \frac{1}{2}\Phi_{4x_{i}x_{k}}^{j}\!
        + 2 \Phi_{5x_{i}}\delta_{kj}-\Phi_{5x_{j}}\delta_{ik}\!
        -  ( \Psi_{2}\Phi_{3}^{jk}  )_{x_{i}}\!
        +\frac{1}{2}  ( \Psi_{2}\Phi_{3}^{ik}  )_{x_{j}}\!
        + \sum_{p=1}^{n} \frac{1}{2}  ( \Psi_{2}\Phi_{3}^{jp}  )_{x_{p}} \delta_{ik}
        \\
        & \qquad\quad
        - \sum_{p=1}^{n}  ( \Psi_{3}^{ip}\Phi_{2}  )_{x_{p}}\delta_{kj}\!
        + \frac{1}{2} \sum_{p=1}^{n}  ( \Psi_{3}^{pj}\Phi_{2}  )_{x_{p}}\delta_{ik}\!
        + \frac{1}{2}   ( \Psi_{3}^{ik}\Phi_{2}  )_{x_{j}}\!
        - \sum_{p=1}^{n}  ( \Psi_{3}^{ij}\Phi_{3}^{kp}  )_{x_{p}}\!
        + \frac{1}{2}  \sum_{p=1}^{n}  ( \Psi_{3}^{ik}\Phi_{3}^{jp}  )_{x_{p}}
        \\
        & \qquad\quad
        +\! \frac{1}{2}  \sum_{p=1}^{n}  ( \Psi_{3}^{pj}\Phi_{3}^{ik}  )_{x_{p}}\!
        -\! \frac{1}{2}    ( \Psi_{4}^{i}\Phi_{1}^{k}  )_{x_{j}}\!
        +\!  \Psi_{4}^{i} \Phi_{2} \delta_{kj}\!
        -\! \frac{1}{2}\Psi_{4}^{j} \Phi_{2} \delta_{ik}\!
        +\! \Psi_{4}^{i}\Phi_{3}^{kj}\!
        -\!\frac{1}{2} \Psi_{4}^{j}\Phi_{3}^{ik}\!
        + \! \Psi_{5}^{i} \Phi_{2} \delta_{kj}\!
        \\
        & \qquad\quad
        -\! \frac{1}{2}\Psi_{5}^{j} \Phi_{2} \delta_{ik}
        + \Psi_{5}^{i}\Phi_{3}^{kj}
        -\frac{1}{2} \Psi_{5}^{j}\Phi_{3}^{ik},
    \\
      &   \Theta_{5}= \Phi_{5},
        \\
      &   \Theta_{6}^{ijk}= - \Phi_{5x_{j}}\delta_{ik}+ \Psi_{6}\Phi_{1}^{k}\delta_{ij},
        \\
      &   \Theta_{7}^{ij} = \sum_{k=1}^{n} \Phi_{5x_{k}x_{k}}\delta_{ij} + \Psi_{2} \Phi_{5} \delta_{ij} + \Psi_{3}^{ij}\Phi_{5} -  ( \Psi_{6}\Phi_{1}^{i}  )_{x_{j}} + \Psi_{6}\Phi_{2} \delta_{ij} + \Psi_{6}\Phi_{3}^{ij},
        \\
      &   \Theta_{8}^{j} = - \frac{1}{2} \sum_{i=1}^n \Phi_{5x_{i}x_{i}x_{j}}\! -\! \frac{1}{2} ( \Psi_{2}\Phi_{5}  )_{x_{j}}\! -\! \frac{1}{2} \sum_{i=1}^n  ( \Psi_{3}^{ij} \Phi_{5}  )_{x_{i}}\! +\! \frac{1}{2} \Psi_{4}^{j}\Phi_{5}\! +\! \frac{1}{2} \Psi_{5}^{j}\Phi_{5}
      \!+\!\frac{1}{2}\sum_{i=1}^n  ( \Psi_{6}\Phi_{1}^{j}  )_{x_{i}x_{i}}
      \\
      & \quad \quad \
        -\!\frac{1}{2}    (\Psi_{6}\Phi_{2} )_{x_{j}}
        - \frac{1}{2}\sum_{i=1}^n  ( \Psi_{6}\Phi_{3}^{ij}  )_{x_{i}}
        +\frac{1}{2} \Psi_{6} \Phi_{4}^{j}.
    \end{align*}
\end{theorem}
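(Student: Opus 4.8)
The plan is to establish \eqref{eq.finalEquation} by the pointwise weighted (Carleman-multiplier) method: conjugate the operator $-dh+\Delta^{2}h\,dt$ by $\theta=e^{\ell}$, split the conjugated operator into a formally self-adjoint part, a formally skew part, and a lower-order remainder, and then run the classical ``$2ab=a^{2}+b^{2}-(a-b)^{2}$ plus integration by parts'' argument for the biharmonic operator (as in the deterministic identity behind \cite[Theorem 1.2]{Guerrero2019}), with It\^o's formula used to handle the stochastic differential $-dh$. First I would conjugate. Since $\ell=s\alpha$ is deterministic, $\theta$ and $\theta^{-1}$ are of bounded variation in $t$, hence $\theta\,dh=\theta\,d(\theta^{-1}w)=dw-s\alpha_{t}w\,dt$; and, writing $L_{i}:=\partial_{x_{i}}-\ell_{x_{i}}$ (the $L_{i}$ commute because $\ell_{x_{i}x_{j}}=\ell_{x_{j}x_{i}}$), one has $\theta\Delta^{2}(\theta^{-1}w)=\big(\sum_{i=1}^{n}L_{i}^{2}\big)^{2}w=\big(\Delta-2\nabla\ell\cdot\nabla+|\nabla\ell|^{2}-\Delta\ell\big)^{2}w$. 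Using $\ell_{x_{i}}=s\lambda\xi\eta_{x_{i}}$, $\ell_{x_{i}x_{j}}=s\lambda\xi(\lambda\eta_{x_{i}}\eta_{x_{j}}+\eta_{x_{i}x_{j}})$ and expanding the square, I would sort the resulting monomials by the order of the derivative falling on $w$ and by the power of $s$; putting the formally self-adjoint block (together with the principal term $\Delta^{2}w$) into $I$, the formally skew block together with $-dw$ into $I_{2}$, and the rest together with $s\alpha_{t}w\,dt$ into $I_{3}$, a direct check gives $\theta(-dh+\Delta^{2}h\,dt)=I\,dt+I_{2}+I_{3}$ with $I,I_{2},I_{3}$ exactly as in \eqref{eq.pfthmFI.3}--\eqref{eq.pfthmFI.5}. (For example, the third-order terms of $\theta\Delta^{2}(\theta^{-1}w)$ reduce to $-4\nabla\ell\cdot\nabla\Delta w=\sum_{i}\Phi_{1}^{i}\Delta w_{x_{i}}$, and one has $\Psi_{2}=2|\nabla\ell|^{2}$, $\Psi_{3}^{ij}=4\ell_{x_{i}}\ell_{x_{j}}$, $\Psi_{6}=|\nabla\ell|^{4}$, $\Phi_{1}^{i}=-4\ell_{x_{i}}$, $\Phi_{4}^{i}=-4|\nabla\ell|^{2}\ell_{x_{i}}$.)

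Next I would multiply $\theta(-dh+\Delta^{2}h\,dt)=I\,dt+I_{2}+I_{3}$ by $2\theta I$ and use $I_{1}=I\,dt$, so that $2\theta I(-dh+\Delta^{2}h\,dt)=2I^{2}\,dt+2I\cdot I_{2}+2I\cdot I_{3}$; all the work then lies in rewriting $2I\cdot I_{2}$. The $dt$-part of $I_{2}$ is a combination of $\Delta w_{x_{i}},\Delta w,w_{x_{i}x_{j}},w_{x_{i}},w$ with smooth deterministic coefficients; multiplying it against the five blocks of $I$ and repeatedly applying the Leibniz rule in $x$ --- peeling off total $x$-derivatives and symmetrizing whenever a derivative can be shared between the two factors --- turns each product into a divergence $\operatorname{div}(\cdots)\,dt$ plus a pointwise quadratic form in $(\nabla^{2}w,\nabla w,w)$. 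Collecting the fluxes produces $2\operatorname{div}(V_{1}+V_{2})\,dt$ (the tensors $\Theta_{1}^{ijkprq},\dots,\Theta_{8}^{j}$ and the entries $V_{1}^{j}$ being precisely the accumulated flux), and collecting the pointwise remainders produces $2M\,dt+2\sum\Lambda_{1}^{ijkp}w_{x_{i}x_{j}}w_{x_{k}x_{p}}\,dt+2\sum\Lambda_{2}^{ij}w_{x_{i}}w_{x_{j}}\,dt+2\Lambda_{3}w^{2}\,dt$.

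It remains to treat the stochastic contribution $-2I\,dw$. I would first integrate by parts in $x$ the products of $-dw$ against $\Delta^{2}w$, $\Psi_{2}\Delta w$ and $\sum\Psi_{3}^{ij}w_{x_{i}x_{j}}$; this produces the spatial-boundary terms $2\sum_{i,j}(w_{x_{i}x_{i}x_{j}}dw-w_{x_{i}x_{j}}dw_{x_{i}}+\Psi_{2}w_{x_{i}}\delta_{ij}dw+\Psi_{3}^{ij}w_{x_{i}}dw)_{x_{j}}$ and reduces everything to expressions $a\,dw$, $a\,dw_{x_{i}}$, $a\,(\nabla\eta\cdot d\nabla w)$ with deterministic $a$. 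To each of these I would apply It\^o's formula in the form $2a\,dv=d(2av)-2a_{t}v\,dt-2\,da\,dv$: the exact-differential parts assemble into $-2d\big(\tfrac12|\nabla^{2}w|^{2}-\tfrac12\Psi_{2}|\nabla w|^{2}-2s^{2}\lambda^{2}\xi^{2}|\nabla\eta\nabla w|^{2}+\tfrac12\Psi_{6}w^{2}\big)$; the $a_{t}$-parts feed the $\Psi_{2t},\Psi_{6t},(\xi\eta_{x_{j}})_{t}$ contributions already present in $\Lambda_{2},\Lambda_{3}$; and the quadratic-variation parts $-2\,da\,dv$, using $dw=\theta\,dh+(\cdots)dt$, $dw_{x_{i}}=\theta(dh_{x_{i}}+\ell_{x_{i}}dh)+(\cdots)dt$, $\nabla\ell=s\lambda\xi\nabla\eta$ and $\Psi_{2}=2|\nabla\ell|^{2}$, combine exactly into the four terms $-4s^{2}\lambda^{2}\xi^{2}\theta^{2}|\nabla\eta d\nabla h+\nabla\eta\nabla\ell dh|^{2}+\theta^{2}\sum_{i,j}[dh_{x_{i}x_{j}}+2\ell_{x_{j}}dh_{x_{i}}+(\ell_{x_{i}x_{j}}+\ell_{x_{i}}\ell_{x_{j}})dh]^{2}-\theta^{2}\Psi_{2}|d\nabla h+\nabla\ell dh|^{2}+\Psi_{6}\theta^{2}(dh)^{2}$ at the end of \eqref{eq.finalEquation}. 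Adding the deterministic and stochastic contributions and moving the two divergences and the exact differential to the left-hand side then yields \eqref{eq.finalEquation}.

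All the substance is in these integrations by parts, and the step I expect to be the main obstacle is the sheer volume of bookkeeping: because $\Delta^{2}$ is fourth order and $G\subset\dbR^{n}$, the product $2I\cdot I_{2}$ expands into several hundred indexed monomials, and the long list $V_{1}^{j},V_{2}^{j},M,\Lambda_{1}^{ijkp},\Lambda_{2}^{ij},\Lambda_{3},\Theta_{1}^{ijkprq},\dots,\Theta_{8}^{j}$ is nothing but what survives after them; I would therefore state the decomposition $\theta(-dh+\Delta^{2}h\,dt)=I\,dt+I_{2}+I_{3}$ in the body and defer the term-by-term matching to the appendix. The one point that goes genuinely beyond the deterministic identity behind \cite[Theorem 1.2]{Guerrero2019} is tracking the It\^o corrections in the stochastic part: one must keep --- rather than discard --- every cross-variation term $dw_{x_{i}}dw_{x_{j}}$, $(dw)^{2}$, and use that the martingale part of $dw_{x_{\beta}}$ equals $\theta$ times that of the corresponding It\^o differential of $h$, so that these terms coalesce into the four quadratic expressions displayed at the end of \eqref{eq.finalEquation} instead of being lost.
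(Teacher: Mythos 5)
Your proposal follows essentially the same route as the paper's own proof: the decomposition $\theta(-dh+\Delta^{2}h\,dt)=I\,dt+I_{2}+I_{3}$ with $I,I_{2},I_{3}$ as in \eqref{eq.pfthmFI.3}--\eqref{eq.pfthmFI.5}, multiplication by $2\theta I$, term-by-term Leibniz-rule manipulation of $I\cdot I_{2}$ to extract the divergences $V_{1},V_{2}$ and the quadratic forms $M,\Lambda_{1},\Lambda_{2},\Lambda_{3}$, and It\^o's formula on the $-dw$ column with the quadratic-variation terms retained and rewritten through $w=\theta h$ — exactly the paper's computation of the $I_{ij}$ in Appendix \ref{App-id}. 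The only differences are presentational (you motivate the splitting via the conjugated factorization $(\Delta-2\nabla\ell\cdot\nabla+|\nabla\ell|^{2}-\Delta\ell)^{2}$ instead of the direct expansion \eqref{eq.space}, and your passing phrase ``with deterministic $a$'' is loose, since the surviving factors such as $w_{x_{i}x_{j}}$ are random, though your subsequent handling of the It\^o corrections is the correct one), so the proposal is sound and matches the paper.
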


\begin{remark}
    The weighted identity \eqref{eq.finalEquation} can be used to derive Carleman estimates for different boundary conditions, such as   \eqref{eq.01BC}  and \eqref{eq.otherBC}. Although the form of the identity is complex, many terms can merge or vanish under different boundary conditions.
\end{remark}

Proof of Theorem \ref{thm.fundamentalIndentity} is put in Appendix \ref{App-id}.

In what follows, for a nonnegative integer $m$, we denote by $O(\lambda^{m})$ a function of order $\lambda^{m}$ for large $\lambda$ (which is independent of $s$).
Likewise, we use the notation $O\bigl(e^{\lambda|\eta|_{C(\bar{G})}}\bigr)$ and so on.
It is easy to check that
\begin{equation}\label{eq.estimatesOrder}
    |\xi_{t}|  \leq \frac{T}{2} \xi^{3}, \quad |\alpha_{t}|  \leq \frac{T}{2} \xi^{3}, \quad \xi^{-1} \leq \frac{T}{2}.
\end{equation}
\begin{proposition}\label{prop.estimatesOrder}
    When $ s $ and $ \lambda $ are large enough, it holds that
    \begin{equation}\label{eq.estimatesOrderInequalities}
        \begin{cases}
            \begin{alignedat}{2}
                &\sum_{i,j,k,p=1}^{n} \Lambda^{ijkp}_{1}  \zeta _{ij} \zeta_{kp} \geq  s  \xi O(\lambda^{4}) |\zeta|^{2}, && \quad \forall \zeta=( \zeta_{ij} ) \in \mathbb{R}^{n\times n},
                \\
                &\sum_{i, j=1}^n \Lambda_{2}^{ij} \zeta_{i} \zeta_{j} \geq \big[s^{3}\xi^{3}O(\lambda^{6}) + Ts^{2}\xi^{4}O(\lambda^{2}) \big] |\zeta|^{2}  , && \quad \forall \zeta= ( \zeta_{1},\dots,\zeta_{n} ) \in \mathbb{R}^{n},
                \\
                &\Lambda_{3} \geq s ^{5} \xi^{5} O(\lambda^{8})  + Ts^{4}\xi^{6}O(\lambda^{4}) . &&
            \end{alignedat}
        \end{cases}
    \end{equation}
\end{proposition}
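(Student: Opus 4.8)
The plan is to prove \eqref{eq.estimatesOrderInequalities} by substituting the explicit weight coefficients \eqref{eq.pfthmFI.3.1} and \eqref{eq.pfthmFI.4.1} into the definitions of $\Lambda_1^{ijkp}$, $\Lambda_2^{ij}$ and $\Lambda_3$, and then estimating term by term. Two elementary facts drive everything. First, since $\xi$ and $\alpha$ depend on $x$ only through $e^{\lambda(|2\eta|_{C(\overline G)}+\eta(x))}$, one has $\partial_{x_i}\xi=\lambda\eta_{x_i}\xi$ and $\partial_{x_i}\alpha=\lambda\eta_{x_i}\xi$; hence applying a spatial derivative to a monomial $s^{a}\lambda^{b}\xi^{c}$ times a product of derivatives of $\eta$ (all bounded because $\eta\in C^{4}(\overline G)$) preserves the powers $a$ and $c$ and raises $b$ by exactly one, up to a term of the same $(s,\xi)$-order but lower $\lambda$-order. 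Second, by \eqref{eq.estimatesOrder} we have $|\xi_{t}|,|\alpha_{t}|\le\tfrac T2\xi^{3}$ and $\xi^{-1}\le\tfrac T2$, so a single time derivative introduces a factor $T$, raises the power of $\xi$ by two, and leaves the powers of $s$ and $\lambda$ untouched.

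First I would treat $\Lambda_1$: every term in its definition is a second-order $x$-derivative of $\Phi_2$ or $\Phi_3^{kp}$, each of which is of order $s\lambda^{\le2}\xi$ by \eqref{eq.pfthmFI.4.1}, and no time derivatives occur; so every entry $\Lambda_1^{ijkp}$ is $s\xi$ times a bounded function times a polynomial in $\lambda$ of degree at most $4$. Using $\lambda\ge C$ to absorb the lower $\lambda$-powers gives $|\Lambda_1^{ijkp}|\le Cs\lambda^{4}\xi$, hence $\big|\sum_{i,j,k,p}\Lambda_1^{ijkp}\zeta_{ij}\zeta_{kp}\big|\le Cs\lambda^{4}\xi|\zeta|^{2}$, which is the first line of \eqref{eq.estimatesOrderInequalities}. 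For $\Lambda_2$ I would split the terms into those containing no time derivative — products and $x$-derivatives of the $\Psi$'s and $\Phi$'s, which a direct inspection of the $s$-, $\xi$- and $\lambda$-degrees shows are all of order $s^{3}\lambda^{\le6}\xi^{3}$ — and the two time-derivative terms $-\tfrac12\Psi_{2t}\delta_{ij}$ and $-4s^{2}\lambda^{2}\xi(\xi\eta_{x_j})_t\eta_{x_i}$, which by the second fact above are of order $Ts^{2}\lambda^{2}\xi^{4}$; a triangle-inequality/Cauchy–Schwarz estimate on the quadratic form then yields the second line. $\Lambda_3$ is handled the same way: its time-derivative-free terms are all of order at most $s^{5}\lambda^{8}\xi^{5}$ — here one additionally uses $s\xi\ge c>0$, which follows from $s\ge C(T^{1/2}+T)$ together with $\xi\ge 2/T$ (since $e^{\lambda(|2\eta|_{C(\overline G)}+\eta)}\ge1$ and $t^{1/2}(T-t)^{1/2}\le T/2$), to absorb the lone genuinely lower term $\tfrac12\Phi_{5x_ix_ix_jx_j}=O(s^{3}\lambda^{8}\xi^{3})$ into $O(s^{5}\lambda^{8}\xi^{5})$ — while the single time-derivative term $\tfrac12\Psi_{6t}$ is of order $Ts^{4}\lambda^{4}\xi^{6}$, giving the third line.

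The whole argument is elementary; the only real obstacle is the bookkeeping. One must run through the numerous terms defining $\Lambda_2$ and $\Lambda_3$ and check that none has an anomalously large power of $s$, $\lambda$ or $\xi$ — in particular that extra powers of $\xi$ are produced only by time derivatives and never by spatial ones, and that the products of the $\Psi$'s with the $\Phi$'s do not push the $s$-degree above the advertised value. Once this routine verification is complete, the stated lower bounds follow at once, with the implicit constants in the $O(\cdot)$ symbols depending only on $G$ and $G_0$ (through $\eta$ and the geometry), as required.
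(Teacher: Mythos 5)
Your proposal is correct: the proposition is exactly the routine order-bookkeeping you describe (the paper states it without proof as a direct computation), and your two driving facts $\nabla\xi=\lambda\xi\nabla\eta$ and $|\xi_t|\le\tfrac T2\xi^3$, together with reading the $O(\cdot)$ bounds as sign-free magnitude estimates on the coefficients, reproduce the intended argument. You also correctly isolate the only delicate points — the time-derivative terms $\Psi_{2t}$, $4s^2\lambda^2\xi(\xi\eta_{x_j})_t\eta_{x_i}$, $\Psi_{6t}$ giving the $T$-weighted contributions, and the absorption of $\tfrac12\Phi_{5x_ix_ix_jx_j}=O(s^3\lambda^8\xi^3)$ into $s^5\xi^5O(\lambda^8)$ via $s\xi\ge c>0$ for $s\ge C(T^{1/2}+T)$ — so nothing essential is missing.
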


The following is the Carleman inequality for the Laplace operator with homogeneous Dirichlet boundary conditions.

\begin{lemma}\label{lemma.LaplaceCarleman}\cite[Lemma 2.4]{Guerrero2019}
	Let $ q \in H^{2}(G)\cap H_0^1(G) $ and $ \eta $ be given by \eqref{eq.eta}. Then there exists a constant $\mathrm{C}>0$ independent of $\mathrm{s}$ and $\lambda$, and parameters $\widehat{\lambda}>1$ and $\widehat{\mathrm{s}}>1$ such that for all $\lambda \geq \widehat{\lambda}$ and for all $\tilde{s} \geq \widehat{s}$
	\begin{align}
		\notag
		&\tilde{s}^{6} \lambda^{8} \int_{G} e^{6 \lambda \eta(x)} e^{2 \tilde{s} e^{\lambda \eta(x)}}|q|^{2} d x +\tilde{s}^{4} \lambda^{6} \int_{G} e^{4 \lambda \eta(x)} e^{2 \tilde{s} e^{\lambda \eta(x)}}|\nabla q|^{2} d x
		\\
		\label{eq.LaplaceCarleman}
		&
		\leq C \Big (\tilde{s}^{6} \lambda^{8} \int_{G_{1}} e^{6 \lambda \eta(x)} e^{2 \tilde{s} e^{\lambda \eta(x)}}|q|^{2} d x +\tilde{s}^{3} \lambda^{4} \int_{G} e^{3 \lambda \eta(x)} e^{2 \tilde{s} e^{2 \tilde{s} \lambda \eta(x)}}|\triangle q|^{2} d x \Big).
	\end{align}
\end{lemma}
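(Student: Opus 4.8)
The estimate is the classical Fursikov--Imanuvilov elliptic Carleman inequality for $-\Delta$ under homogeneous Dirichlet data, so one may invoke \cite[Lemma 2.4]{Guerrero2019} directly; for the record, the plan is to prove it by the conjugation--commutator method. Writing $\varphi:=e^{\lambda\eta}$ (so that $\nabla\varphi=\lambda\varphi\nabla\eta$ and $\Delta\varphi=\lambda^{2}\varphi|\nabla\eta|^{2}+\lambda\varphi\Delta\eta$), I would introduce the weight $\theta_{0}:=e^{\tilde s\varphi}$ and conjugate. With $w:=\theta_{0}q$ a direct computation gives
\[
\theta_{0}\,\Delta q=\theta_{0}\,\Delta(\theta_{0}^{-1}w)
=\underbrace{\big(\Delta w+\tilde s^{2}|\nabla\varphi|^{2}w\big)}_{=:P_{1}w}
+\underbrace{\big(-2\tilde s\,\nabla\varphi\cdot\nabla w-\tilde s(\Delta\varphi)\,w\big)}_{=:P_{2}w},
\]
where $P_{1}$ is formally self-adjoint and $P_{2}$ formally skew-adjoint on $L^{2}(G)$.

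Next I would square and integrate, using $\|\theta_{0}\Delta q\|_{L^{2}(G)}^{2}=\|P_{1}w\|^{2}+\|P_{2}w\|^{2}+2(P_{1}w,P_{2}w)$ and treating the cross term through the commutator formula $2(P_{1}w,P_{2}w)_{L^{2}(G)}=([P_{1},P_{2}]w,w)_{L^{2}(G)}+\mathcal{B}$, with $\mathcal{B}$ collecting boundary contributions. Computing the leading part of $[P_{1},P_{2}]$ by means of the identities for $\nabla\varphi$ and $\Delta\varphi$, the dominant contribution comes from $[\tilde s^{2}|\nabla\varphi|^{2},-2\tilde s\nabla\varphi\cdot\nabla]$ and yields the sign-definite lower bound
\[
([P_{1},P_{2}]w,w)\ \ge\ c\,\tilde s^{3}\lambda^{4}\int_{G}\varphi^{3}|\nabla\eta|^{4}w^{2}\,dx
+c\,\tilde s\lambda^{2}\int_{G}\varphi|\nabla\eta|^{2}|\nabla w|^{2}\,dx-(\text{lower order}),
\]
where the remaining terms carry strictly fewer powers of $\tilde s$ or $\lambda$ and are absorbed once $\tilde s\ge\widehat s$ and $\lambda\ge\widehat\lambda$.

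The hard part is the positivity of the commutator, which requires $|\nabla\eta|$ bounded below. This is exactly guaranteed by \eqref{eq.eta}, where $|\nabla\eta|\ge C>0$ on $\overline{G\setminus G_{1}}$, so the displayed lower bound is genuinely coercive off $G_{1}$; on $G_{1}$ the coefficient may degenerate, and one pays for this with the observation integral $\int_{G_{1}}$ on the right-hand side, extracted via a cutoff supported in $G_{0}\supset\overline{G_{1}}$ together with Cauchy--Schwarz. The boundary terms $\mathcal{B}$ are controlled by the homogeneous data: since $q|_{\partial G}=0$ we have $w|_{\partial G}=0$ and $\nabla w=(\partial_{\nu}w)\nu$ on $\partial G$, and together with $\eta|_{\partial G}=0$ the term $\mathcal{B}$ reduces to a multiple of $-\tilde s\lambda\int_{\partial G}\varphi\,(\partial_{\nu}\eta)|\partial_{\nu}w|^{2}$; because $\eta$ attains its minimum $0$ on $\partial G$ we have $\partial_{\nu}\eta\le 0$, hence $\mathcal{B}\ge 0$ and may be discarded.

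Finally I would undo the conjugation. From $e^{\tilde s\varphi}\nabla q=\nabla w-\tilde s\lambda\varphi(\nabla\eta)w$ one recovers $|q|^{2}$ and $|\nabla q|^{2}$ weighted by $\tilde s^{3}\lambda^{4}e^{3\lambda\eta}e^{2\tilde s\varphi}$ and $\tilde s\lambda^{2}e^{\lambda\eta}e^{2\tilde s\varphi}$ respectively, the auxiliary $\tilde s^{2}\lambda^{2}\varphi^{2}w^{2}$ term being absorbed into $\tilde s^{3}\lambda^{4}\varphi^{3}w^{2}$ for large parameters (legitimate since $\varphi\ge 1$). The precise powers $\tilde s^{6}\lambda^{8}$, $\tilde s^{4}\lambda^{6}$, $\tilde s^{3}\lambda^{4}$ and the weighted right-hand side $\tilde s^{3}\lambda^{4}e^{3\lambda\eta}|\Delta q|^{2}$ appearing in \eqref{eq.LaplaceCarleman} are then obtained by carrying the common factor $\tilde s^{3}\lambda^{4}e^{3\lambda\eta}$ through the pointwise Carleman identity underlying the estimate before integrating; multiplying that pointwise identity by this factor produces only additional divergence corrections of the form $V\cdot\nabla(\tilde s^{3}\lambda^{4}\varphi^{3})$, which are of lower order relative to the boosted coercive terms and are absorbed in the same way. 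This yields \eqref{eq.LaplaceCarleman} with a constant $C$ independent of $\tilde s$ and $\lambda$.
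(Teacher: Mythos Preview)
The paper does not give its own proof of this lemma; it is simply quoted from \cite[Lemma~2.4]{Guerrero2019}. Your sketch reproduces the standard Fursikov--Imanuvilov argument that underlies that reference, so there is nothing to compare against here and your outline is adequate for the purpose.

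One small point worth tightening: the identity $2(P_{1}w,P_{2}w)=([P_{1},P_{2}]w,w)+\mathcal{B}$ you invoke is a convenient heuristic, but in practice the cross term is computed by direct integration by parts of each product (e.g.\ $(\Delta w,\,-2\tilde s\nabla\varphi\cdot\nabla w)$, $(\tilde s^{2}|\nabla\varphi|^{2}w,\,-2\tilde s\nabla\varphi\cdot\nabla w)$, etc.) rather than by an abstract commutator formula, since $P_{2}$ is only skew-adjoint up to zero-order terms and boundary contributions. The outcome---the coercive terms $c\,\tilde s^{3}\lambda^{4}\!\int\varphi^{3}|\nabla\eta|^{4}w^{2}$ and $c\,\tilde s\lambda^{2}\!\int\varphi|\nabla\eta|^{2}|\nabla w|^{2}$, positivity of the boundary contribution via $\partial_{\nu}\eta\le 0$, and the local term on $G_{1}$ compensating for the possible vanishing of $|\nabla\eta|$ there---is exactly as you describe. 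Your final paragraph, in which you upgrade the powers to $(\tilde s^{6}\lambda^{8},\tilde s^{4}\lambda^{6},\tilde s^{3}\lambda^{4})$ by threading an extra factor $\tilde s^{3}\lambda^{4}\varphi^{3}$ through the pointwise identity, is also correct; alternatively one may first prove the estimate with powers $(\tilde s^{3}\lambda^{4},\tilde s\lambda^{2},1)$ and then multiply by $\tilde s^{3}\lambda^{4}\varphi^{3}$, absorbing the commutator of this weight with $\Delta$ into the left-hand side for large $\tilde s,\lambda$.
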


\begin{remark}\label{remark.LaplaceCarleman}
	By taking $\tilde{s} =s \frac{\exp ( \lambda |2 \eta|_{C(\overline{G})})}{t^{1/2}(T-t)^{1/2}}$ in \eqref{eq.LaplaceCarleman},  multiplying \eqref{eq.LaplaceCarleman} by
	$ \operatorname{exp} \Big(2 s \frac{- \exp (4 \lambda |\eta|_{C(\overline{G})} )}{t^{1/2}(T-t)^{1/2}}\Big) $,
	and integrating it in $(0, T)$, we obtain that for $\lambda \geq \hat{\lambda}$ and $s \geq \hat{s} T$,
	\begin{equation*}\label{eq.ELaplaceCarleman}
		\int_{Q}\!\! s^{6} \lambda^{8} \xi^{6}\theta^{2} |q|^{2} d x d t\!+\!  \int_{Q}\!\!  s^{4} \lambda^{6}  \xi^{4}\theta^{2} |\nabla q|^{2} d x d t
		\leq  C  \Big(  \int_{(0, T)\times G_{1}}\!\!\! s^{6} \lambda^{8} \xi^{6}\theta^{2} |q|^{2} d x d t+   \int_{Q}\!\!  s^{3} \lambda^{4}  \xi^{3}\theta^{2} |\Delta q|^{2} d x d t \Big
		).
	\end{equation*}
\end{remark}

Denote
\begin{equation}
    \label{eq.alphastar}
\alpha_{\star}(t)=\frac{e^{\lambda|2 \eta|_{C(\overline{G})}}-e^{4 \lambda|\eta|_{C(\overline{G})}}}{t^{1/2}(T-t)^{1/2}}, \quad \quad
\xi_{\star}(t)=\frac{e^{\lambda|2 \eta|_{C(\overline{G})}}}{t^{1/2}(T-t)^{1/2}}.
\end{equation}

\begin{lemma}\label{lemma.auxiliaryEstimates}
	There exists a constant $ C>0 $, such that for all
	$ h \in  L_{\mathbb{F}}^{2}(0, T ; H^{4}(G)) , f, g \in L_{\mathbb{F}}^{2}(0, T ; L^{2}(G))$, satisfying
	\begin{equation*}
		\left\{
		\begin{alignedat}{2}
			& - d h + \Delta^2 h d t=    f d t+g d W(t) && \quad \text { in } Q, \\
			& h=\Delta h=0 && \quad \text { on } \Sigma,
		\end{alignedat}
		\right.
	\end{equation*}
	we have
	\begin{align*}
		\notag
		&\mathbb{E} \int_{\Sigma} \Big[
		s^{\frac{9}{4}} \lambda^{3} \xi^{\frac{9}{4}} \theta^{2} |\nabla^{2} h|^{2}
		+ s^{\frac{3}{4}} \lambda     \xi^{\frac{3}{4}} \theta^{2} |\nabla^{3} h|^{2}
		\Big]d\sigma dt
		+ \mathbb{E} \int_{0}^{T} e^{2s\alpha_{\star}} |h|^{2}_{H^{4}(G)} dt
		\\
		&
		\leq
		C \mathbb{E} \int_{Q}  s^{6} \lambda^{8} \xi^{6} \theta^{2} |h|^{2} dxdt
		+ C  \mathbb{E} \int_{Q}   \theta^{2} |f|^{2} dxdt,\qquad \forall s\geq C T^{1/2}.
	\end{align*}
\end{lemma}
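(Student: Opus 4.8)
The plan is to estimate the three pieces of the left-hand side separately, using two elementary facts about the weights: since $\eta\ge 0$ on $\overline G$ one has $\xi\ge\xi_\star$ and $\theta\ge\theta_\star:=e^{s\alpha_\star}$ throughout $\overline Q$, and since $\eta|_{\partial G}=0$ these become equalities on $\Sigma$, so that $\xi|_\Sigma=\xi_\star$ and $\theta|_\Sigma=\theta_\star$ are functions of $t$ alone. I will use \eqref{eq.estimatesOrder} freely, also with $(\alpha,\xi)$ replaced by $(\alpha_\star,\xi_\star)$.

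\emph{The interior $H^4$ term.} Put $p:=e^{s\alpha_\star}h$. Since $e^{s\alpha_\star}$ depends on $t$ only and vanishes at $t=0$ and $t=T$, the pair $(p,\,e^{s\alpha_\star}g)$ solves $-dp+\Delta^2p\,dt=F\,dt+(e^{s\alpha_\star}g)\,dW(t)$ in $Q$ with $p=\Delta p=0$ on $\Sigma$, zero terminal datum $p(T)=0$, and $F=-(e^{s\alpha_\star})_t\,h+e^{s\alpha_\star}f$. I will invoke the $L^2(0,T;H^4(G))$-maximal regularity for backward stochastic fourth order parabolic equations with vanishing terminal datum, $\mathbb E\int_0^T|p|^2_{H^4(G)}\,dt\le C\,\mathbb E\int_0^T|F|^2_{L^2(G)}\,dt$ with $C$ \emph{independent of $T$} --- the backward counterpart of stochastic parabolic maximal regularity, resting on the fact that $\Delta^2$ under the Navier conditions $p=\Delta p=0$ is an isomorphism of $H^4(G)\cap(\text{Navier BC})$ onto $L^2(G)$, so that no Poincar\'e-in-time loss occurs; this is a variant of the well-posedness/regularity theory underlying \cite[Theorem 4.11]{Lue2021a}. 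Since $|(e^{s\alpha_\star})_t|=s|(\alpha_\star)_t|e^{s\alpha_\star}\le\tfrac{sT}{2}\xi_\star^3\theta_\star$, this gives $\mathbb E\int_Q\theta_\star^2|h|^2_{H^4}\le C\,\mathbb E\int_Q s^2T^2\xi_\star^6\theta_\star^2|h|^2+C\,\mathbb E\int_Q\theta_\star^2|f|^2$, and since $\xi_\star\le\xi$, $\theta_\star\le\theta$ and $s^2T^2\le s^6\lambda^8$ for $s\ge CT^{1/2}$, the right-hand side is $\le C\,\mathbb E\int_Q s^6\lambda^8\xi^6\theta^2|h|^2+C\,\mathbb E\int_Q\theta^2|f|^2$, which in particular settles the $H^4$ part of the claim.

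\emph{The boundary traces.} On $\Sigma$ the boundary integral equals $\mathbb E\int_0^T\big(s^{9/4}\lambda^3\xi_\star^{9/4}\theta_\star^2|\nabla^2h|^2_{L^2(\partial G)}+s^{3/4}\lambda\,\xi_\star^{3/4}\theta_\star^2|\nabla^3h|^2_{L^2(\partial G)}\big)\,dt$, the weights being $x$-independent there. Applying the multiplicative trace inequality $|v|^2_{L^2(\partial G)}\le C|v|_{L^2(G)}|v|_{H^1(G)}$ (with $v=\nabla^2h$, then $v=\nabla^3h$) and the interpolation bound $|h|^2_{H^3}\le C|h|_{H^2}|h|_{H^4}$, one gets $|\nabla^2h|^2_{L^2(\partial G)}\le C|h|^{3/2}_{H^2}|h|^{1/2}_{H^4}$ and $|\nabla^3h|^2_{L^2(\partial G)}\le C|h|^{1/2}_{H^2}|h|^{3/2}_{H^4}$. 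Writing the two integrands as $\big(\Phi|h|^2_{H^2}\big)^{3/4}\big(\theta_\star^2|h|^2_{H^4}\big)^{1/4}$ and $\big(\Phi|h|^2_{H^2}\big)^{1/4}\big(\theta_\star^2|h|^2_{H^4}\big)^{3/4}$, where $\Phi(t):=s^3\lambda^4\xi_\star^3\theta_\star^2$ (note $(s^3\lambda^4\xi_\star^3)^{3/4}=s^{9/4}\lambda^3\xi_\star^{9/4}$ and $(s^3\lambda^4\xi_\star^3)^{1/4}=s^{3/4}\lambda\,\xi_\star^{3/4}$), Young's inequality bounds the whole boundary integral by $C\,\mathbb E\int_0^T\Phi|h|^2_{H^2}\,dt+C\,\mathbb E\int_0^T\theta_\star^2|h|^2_{H^4}\,dt$; the second term is controlled by the previous step, so it remains to bound $\mathbb E\int_0^T\Phi|h|^2_{H^2}\,dt$.

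\emph{The weighted $H^2$ estimate, and the main obstacle.} By elliptic regularity $|h(t)|^2_{H^2(G)}\le C|\Delta h(t)|^2_{L^2(G)}$ for $h(t)\in H^2(G)\cap H^1_0(G)$, so it suffices to bound $\mathbb E\int_Q\Phi|\Delta h|^2$. I would apply It\^o's formula to $t\mapsto\Phi(t)|h(t)|^2_{L^2(G)}$; integrating over $[0,T]$ (both endpoint contributions vanish since $\Phi(0)=\Phi(T)=0$), taking expectation (the $dW$-integral has zero mean, after the usual localization), and using $\int_G h\,\Delta^2h\,dx=|\Delta h|^2_{L^2(G)}$ (valid precisely because $h=\Delta h=0$ on $\partial G$), one obtains the identity $2\,\mathbb E\int_Q\Phi|\Delta h|^2+\mathbb E\int_Q\Phi|g|^2=-\mathbb E\int_Q\Phi'|h|^2+2\,\mathbb E\int_Q\Phi\,hf$. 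Discarding the nonnegative term $\mathbb E\int_Q\Phi|g|^2$, estimating $2\Phi\,hf\le\tfrac{\Phi^2}{\theta^2}h^2+\theta^2f^2$, and checking the pointwise bound $|\Phi'|+\tfrac{\Phi^2}{\theta^2}\le C\,s^6\lambda^8\xi^6\theta^2$ (from \eqref{eq.estimatesOrder}, the relations $\xi_\star\le\xi$, $\theta_\star\le\theta$, and $s\ge CT^{1/2}$; note in particular $\tfrac{\Phi^2}{\theta^2}=s^6\lambda^8\xi_\star^6\theta_\star^4\theta^{-2}\le s^6\lambda^8\xi^6\theta^2$), one concludes $\mathbb E\int_Q\Phi|\Delta h|^2\le C\,\mathbb E\int_Q s^6\lambda^8\xi^6\theta^2|h|^2+C\,\mathbb E\int_Q\theta^2|f|^2$; combining the three steps gives the lemma. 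The step I expect to be the main obstacle is the first one: securing the backward stochastic $L^2(0,T;H^4)$-maximal regularity with a $T$-uniform constant, which is exactly where the boundary conditions $h=\Delta h=0$ enter essentially (through the invertibility of $\Delta^2$ under the Navier conditions), consistent with the remark following Theorem \ref{thm.carlemanBackward} on the difficulty of $\partial_\nu h=\partial_\nu\Delta h=0$; a secondary point of care is the weight bookkeeping in the last step, where the exponents $\tfrac94,\tfrac34$ and the restriction on $s$ must be reconciled. The trace and interpolation inequalities, Young's inequality, the one-line It\^o computation, and elliptic $H^2$-regularity are all routine, as is justifying the It\^o step on the open interval by working on $[\varepsilon,T-\varepsilon]$ and letting $\varepsilon\to0$.
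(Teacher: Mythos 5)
Your overall architecture is essentially the paper's: conjugate by the purely time-dependent weight $e^{s\alpha_\star}$ and use a $T$-uniform maximal regularity estimate for the backward equation with zero terminal datum to control $\mathbb{E}\int_0^T e^{2s\alpha_\star}|h|^2_{H^4(G)}dt$, then handle the boundary traces by trace, interpolation and Young. One caveat on the first ingredient: the inequality $\mathbb{E}\int_0^T|p|^2_{H^4(G)}dt\le C\,\mathbb{E}\int_0^T|F|^2_{L^2(G)}dt$ with $C$ independent of $T$ is not contained in \cite[Theorem 4.11]{Lue2021a} (which gives well-posedness in the energy space only); the paper proves it separately as Lemma \ref{lemma.AdjustBackwardEstimates}, via a Galerkin reduction and It\^o's formula applied to $\int_G h^m\Delta^2 h^m\,dx$, which is exactly where the Navier conditions and the zero terminal datum enter. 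You correctly identify this as the key missing piece and the right mechanism, but as written it is an unproven auxiliary lemma rather than a citation.

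The concrete flaw is in your last step. Because you interpolate the traces only down to $H^2(G)$, you must control $\mathbb{E}\int_0^T\Phi|h|^2_{H^2(G)}dt$ with $\Phi=s^3\lambda^4\xi_\star^3\theta_\star^2$, and your It\^o identity then produces the term $\mathbb{E}\int_Q|\Phi'|h^2$. The claimed pointwise bound $|\Phi'|\le C s^6\lambda^8\xi^6\theta^2$ is false in the stated range $s\ge CT^{1/2}$: $\Phi'$ contains $3s^3\lambda^4\xi_\star^2\xi_{\star t}\theta_\star^2$, which by the analogue of \eqref{eq.estimatesOrder} is only of size $s^3\lambda^4T\xi_\star^5\theta_\star^2$, and comparing with $s^6\lambda^8\xi^6\theta^2$ requires $T\le Cs^3\lambda^4\xi$; for $t$ of order $T/2$ and $x$ near $\partial G$ one has $\xi\approx c\,e^{2\lambda|\eta|_{C(\overline G)}}/T$ and $\theta\approx\theta_\star$, so with $s$ of order $T^{1/2}$ this amounts to $T^{1/2}\le C\lambda^4e^{2\lambda|\eta|_{C(\overline G)}}$, which fails for large $T$ at fixed $\lambda$. (Your companion bound $\Phi^2/\theta^2\le s^6\lambda^8\xi^6\theta^2$ is fine.) Thus your route proves the lemma only under the stronger restriction $s\ge C(T^{1/2}+T)$ --- sufficient for the application in Section \ref{section.carleman}, but weaker than the statement. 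The repair, and what the paper actually does in \eqref{eq.lemma2E2}--\eqref{eq.lemma2E3}, is to interpolate $H^{5/2}$ and $H^{7/2}$ between $L^2(G)$ and $H^4(G)$, i.e. $|\nabla^2h|^2_{L^2(\partial G)}\le C|h|^{3/4}_{L^2(G)}|h|^{5/4}_{H^4(G)}$ and $|\nabla^3h|^2_{L^2(\partial G)}\le C|h|^{1/4}_{L^2(G)}|h|^{7/4}_{H^4(G)}$, so that Young's inequality (with exponents $8/3,8/5$ and $8,8/7$) lands directly on $s^6\lambda^8\xi^6\theta^2|h|^2+e^{2s\alpha_\star}|h|^2_{H^4(G)}$; the whole weighted-$H^2$/It\^o step, and with it the problematic $\Phi'$ term, then becomes unnecessary and no condition beyond $s\ge CT^{1/2}$ is used.
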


Lemma \ref{lemma.auxiliaryEstimates} is a generalization of Lemma 2.3  in \cite{Guerrero2019} to stochastic equations. Its proof is put in Appendix \ref{sec.lemmas}.

\section{Proof of the Carleman estimate for the backward equation}\label{section.carleman}

\begin{proof}[Proof of Theorem \ref{thm.carlemanBackward}]

    Let
    $ h \in  L_{\mathbb{F}}^{2}(0, T ; H^{4}(G)),   f \in L_{\mathbb{F}}^{2}(0, T ; L^{2}(G)),  g \in L_{\mathbb{F}}^{2}(0, T ; H^{2}(G)) $
    satisfying
    \begin{equation}\label{eq.pfthm2.1.S1}
        \left\{
        \begin{alignedat}{2}
            &-d h+\Delta^2 h d t=f d t+g d W(t) && \quad \text { in } Q, \\
            &h=\Delta h=0 && \quad \text { on } \Sigma.
        \end{alignedat}
        \right.
    \end{equation}

    In order to simplify the formulae used in the sequel, we define
    \begin{align}
        \notag
        &\mathcal{A}_{1} = s^{5}  \mathbb{E}\int_{Q} \xi^{5}[O(\lambda^{8})+s  \xi O(\lambda^{7})]|w|^{2} d x d t ,
        & &
         \mathcal{A}_{2} = s^{3}  \mathbb{E}\int_{Q} \xi^{3}[O( \lambda^{6} )+s \xi O( \lambda^{5} )]|\nabla w|^{2} d x d t,
        \\ \label{eq.pfthm2.1.S8}
        &\mathcal{A}_{3} = s  \mathbb{E}\int_{Q} \xi[O( \lambda^{4} )+s \xi O( \lambda^{3} )]|\nabla^{2} w|^{2} d x d t,
        & &
        \mathcal{B}_{1} = s^{4}  \mathbb{E}\int_{\Sigma} \xi^{4}[ O ( \lambda^{5} ) +s \xi
         O ( \lambda^{4} ) ]\Big(\frac{\partial w}{\partial \nu}\Big)^{2} d \sigma d t,\\ \notag
        & \mathcal{B}_{2} =  s^{2} \mathbb{E} \int_{\Sigma} \xi^{2} O ( \lambda^{3} ) \theta^{2}|\nabla^{2} h|^{2} d \sigma d t,
        & &
        \mathcal{B}_{3} =  s^{1/2}  \mathbb{E} \int_{\Sigma} \xi^{1/2} O ( \lambda ) \theta^{2}|\nabla^{3} h|^{2} d \sigma d t,
    \end{align}
    and
    \begin{equation*}
        \mathcal{A}=\mathcal{A}_{1}+\mathcal{A}_{2}+\mathcal{A}_{3},
        \quad \quad
        \mathcal{B}=\mathcal{B}_{1}+\mathcal{B}_{2}+\mathcal{B}_{3}.
    \end{equation*}

    Integrating the equality \eqref{eq.finalEquation} on $Q$, taking mathematical expectation in both sides, and noting \eqref{eq.estimatesOrder}, \eqref{eq.estimatesOrderInequalities}, we conclude that, for $ s \geq C T^{1/2} $
    \begin{align}
        \notag
        & 2 \mathbb{E}\int_{Q} \theta I ( - dh + \Delta^{2} h dt ) dx
         + 2 \mathbb{E}\int_{Q} \sum_{i, j=1}^n [
       w_{x_{i}x_{i}x_{j}} dw - w_{x_{i}x_{j}}dw_{x_{i}}
       +\Psi_{2} w_{x_{i}} \delta_{ij} dw
       +\Psi_{3}^{ij} w_{x_{i}} dw
       ]_{x_{j}} dx
       \\ \label{eq.thm31S1}
        &
        - 2  \mathbb{E}\int_{Q} \operatorname{div} V_{1} dxdt
       - 2 \mathbb{E}\int_{Q} \operatorname{div} V_{2} dx dt
       \\ \notag
        & \geq
       2 \mathbb{E}\!\int_{Q}\! I^{2} dxdt
       \!+\! 2 \mathbb{E}\!\int_{Q}\! II_{3} dx
       \!+\! 2 \mathbb{E}\!\int_{Q}\! M dxdt
       \!+\! \mathbb{E}\int_{Q} \theta^{2}\! \sum_{i, j=1}^n\!\big[  d h_{x_{i}x_{j}}\! +\! 2 \ell_{x_{j}} dh_{x_{i}}\! +\!   ( \ell_{x_{i}x_{j}} + \ell_{x_{i}} \ell_{x_{j}} ) dh \big]^{2} dx
        \\ \notag
        & \quad
        \!-\! \mathbb{E}\int_{Q} \theta^{2}  \Psi_{2} | d \nabla h
        \!+\! \nabla \ell dh|^{2} dx
        \!-\! 4 \mathbb{E}\int_{Q}  s^{2}\lambda^{2}\xi^{2} \theta^{2} | \nabla \eta d \nabla h
        \!+\! \nabla \eta \nabla \ell dh|^{2} dx
        \!+\!  \mathbb{E}\int_{Q}  \Psi_{6} \theta^{2} ( dh )^{2} dx + \mathcal{A}
       .
    \end{align}

    We will estimate the divergence terms, the interior terms, the stochastic terms, and the boundary terms in the following Steps 1--4.

    {\bf Step 1}. In this step, we deal with the divergence term.

    Using the boundary conditions satisfied by $h$, on the boundary $ \Sigma $, we get that
    \begin{equation}\label{eq.thm31S2}
        w=0,\quad \nabla w = \frac{\partial w}{\partial \nu} \nu ,\quad \Delta w=2 s \lambda \xi \frac{\partial \eta}{\partial \nu} \frac{\partial w}{\partial \nu} ,
        \quad \frac{\partial^{2} w}{\partial \nu^{2}}=2 s \lambda \xi \frac{\partial \eta}{\partial \nu} \frac{\partial w}{\partial \nu} -\operatorname{div}(\nu)\frac{\partial w}{\partial \nu} 
    \end{equation}
and
\begin{equation}\label{eq.thm31BoundaryEstimates1}
\begin{aligned}
&|\nabla^{2} w|^{2}=
         s^{2} \xi^{2} O(\lambda^{2}) \Big( \frac{\partial w}{\partial \nu} \Big)^{2}
        + O(1) \theta ^{2} | \nabla^{2} h|^{2},
            \\
            &|\nabla^{3} w|^{2}=
          s^{4} \xi^{4} O(\lambda^{4}) \Big ( \frac{\partial w}{\partial \nu} \Big)^{2}
        + s^{2} \xi^{2} O(\lambda^{2}) \theta ^{2} | \nabla^{2} h|^{2}
        + O(1) \theta ^{2} | \nabla^{3} h|^{2},
        \end{aligned} \quad \mbox{for } s \geq C T.
    \end{equation}

Combining \eqref{eq.estimatesOrder} and \eqref{eq.thm31S2},  for $ s\geq C T $, we obtain that
\begin{align}\label{eq.thm31S4}
    &   -2  \mathbb{E}  \int_{Q} \operatorname{div} V_{1} dxdt\nonumber
    \\
    &   \leq
    \!- \!2
     \mathbb{E}\! \int_{\Sigma}\!  \Big[
    \!-\! 4 s \lambda  \xi \frac{\partial \eta}{\partial \nu} \!\Big(  \frac{\partial \Delta w}{\partial \nu}  \Big)^{2}\!
    \!+\! 2 s \lambda  \xi \frac{\partial \eta}{\partial \nu} | \nabla \Delta w |^{2}
    \!\!-\! 90 s^{5} \lambda^{5}  \xi^{5} \!\Big( \frac{\partial \eta}{\partial \nu} \Big) ^{5} \!\Big(  \frac{\partial w}{\partial \nu}  \Big)^{2}
    \!+\! 12 s^{3} \lambda^{3}  \xi^{3}  \!\Big( \frac{\partial \eta}{\partial \nu} \Big) ^{3} | \nabla^{2} w \!\cdot\! \nu |^{2}\nonumber
    \\
    &\qquad\qquad \ \
    - 4 s^{3} \lambda^{3}  \xi^{3}  \Big( \frac{\partial \eta}{\partial \nu} \Big) ^{3}  \frac{\partial w}{\partial \nu}   \frac{\partial \Delta w}{\partial \nu}
    - 2 s^{3} \lambda^{3}  \xi^{3}  \Big( \frac{\partial \eta}{\partial \nu} \Big) ^{3} | \nabla^{2} w |^{2}  \Big] d\sigma dt
    +\mathcal{B}_{1}.
\end{align}

Combining \eqref{eq.pfthmFI.3.1}, \eqref{eq.pfthmFI.4.1}, \eqref{eq.estimatesOrder}, \eqref{eq.thm31S2} and \eqref{eq.thm31BoundaryEstimates1}, for $ s\geq C T $, we have
\begin{align}
    \label{eq.thm31S5}
        - 2 \mathbb{E}  \int_{Q} \operatorname{div} V_{2} dx dt
            \leq \mathcal{B}_{1}+ \mathcal{B}_{2} + \mathcal{B}_{3}.
\end{align}

    {\bf Step 2.} In this step, we simplify $ 2 \mathbb{E}\int_{Q} M dxdt $  via integration by parts.

    Thanks to \eqref{eq.pfthmFI.3.1}, \eqref{eq.pfthmFI.4.1} and  \eqref{eq.pfthm2.1.S8}, we obtain
    \begin{equation}\label{eq.MK0}
        2 \mathbb{E} \int_{Q} - \sum_{i,k,j} ( \Psi_{5}^{i} \Phi_{1}^{k} )_{x_{j}} w_{x_{i}} w_{x_{k}x_{j}}   dxdt \geq - (\mathcal{A}_{2}+\mathcal{A}_{3}).
    \end{equation}
    Combining \eqref{eq.estimatesOrder}, \eqref{eq.pfthm2.1.S8} and \eqref{eq.thm31S2}, using integration by parts, for $ s\geq CT $, we have
    \begin{align}
        \notag &
         - \mathbb{E}\int_{Q}  32 s^{3} \lambda ^{4} \xi ^{3} |\nabla \eta|^{2}  | \nabla^{2} w \nabla \eta  | ^{2} dxdt
       \\ &\geq
        -8 \mathbb{E}\int_{Q}  s \lambda^{2} \xi |\nabla \Delta w \nabla \eta|^{2} dxdt
        -32 \mathbb{E}\int_{Q}  s^{5} \lambda^{6} \xi^{5} |\nabla \eta|^{4} |\nabla  w \nabla \eta|^{2} dxdt
        - (\mathcal{A}_{2}+\mathcal{B}_{1}). \label{eq.MK1}
    \end{align}

    Thanks to \eqref{eq.eta}, we deduce that for $\lambda \geq C$,
    \begin{align}
        \notag
        &2 \mathbb{E}  \int_{Q}  64 s^{5} \lambda ^{5} \xi^{5}  ( \nabla^{2} \eta \nabla \eta \nabla \eta  ) |\nabla w \cdot \nabla \eta|^{2} dxdt
        \\\label{eq.MK2}
        &
        \geq  - C  \mathbb{E}  \int_{(0, T) \times G_{1}}s^{5} \lambda ^{5} \xi^{5} |\nabla w|^{2} dxdt -  \mathbb{E}  \int_{Q} s^{5} \lambda^{6} \xi^{5} |\nabla \eta|^{4} | \nabla w \cdot \nabla \eta|^{2} dxdt.
    \end{align}

From \eqref{eq.estimatesOrder} and  \eqref{eq.pfthm2.1.S8}, we see that for $ s\geq CT $,
    \begin{align}
        2 \mathbb{E}  \int_{Q} ( 4 s^{3} \lambda^{3} \xi ^{3}  ( \nabla^{2} \eta \nabla \eta \nabla \eta  ) +2  s^{3} \lambda^{3} \xi ^{3} |\nabla \eta|^{2} \Delta \eta) ( |\nabla ^{2} w|^{2}-|\Delta w|^{2} )
        \geq - (\mathcal{A}_{2}+\mathcal{B}_{1}).
    \end{align}

    Using the argument similar to \eqref{eq.MK1}, we get that for $ \lambda \geq C $ and $ s\geq CT $,
    \begin{align}
        \notag
        &\mathbb{E}  \int_{Q}
        32 s^{3} \lambda^{3} \xi^{3}  ( \nabla^{2} w \nabla \eta \nabla \eta  ) \sum\limits_{i,j=1}^{n} \eta_{x_{i}x_{j}} w_{x_{i}x_{j}} dxdt
        \\
        &
        \geq  \mathbb{E} \int_{Q} 32  s^{3} \lambda^{3} \xi^{3} \nabla^{2} \eta  (\nabla^{2} w \nabla \eta )  ( \nabla^{2} w \nabla \eta  ) dxdt - ( \mathcal{A}_{2}+\mathcal{A}_{3}+\mathcal{B}_{1}+\mathcal{B}_{2} )
    \end{align}
    and
    \begin{align}
        \notag
        & 2 \mathbb{E}  \int_{Q} 64   s^{3} \lambda^{3} \xi^{3} \nabla^{2} \eta   (\nabla^{2} w \nabla \eta )   ( \nabla^{2} w \nabla \eta  ) dxdt
        \\ \notag
        &\geq  - C \sum_{i,j,k=1}^{n} \mathbb{E}  \int_{Q}  s^{3} \lambda^{3} \xi^{3} \eta_{x_{i}}\eta_{x_{j}} w_{x_{i}x_{k}}w_{x_{j}x_{k}}  dxdt
            \\\label{eq.MK4p}
            & \geq - C \mathbb{E}\int_{Q}   s^{3} \lambda ^{3} \xi ^{3} \sum_{i,j,k=1}^{n}  \eta_{x_{i}}^{2} \eta_{x_{i}} \eta_{x_{j}} w_{x_{i}x_{k}x_{k}} w_{x_{j}} dxdt -( \mathcal{A}_{2} + \mathcal{B}_{1} )
            \\\notag
            &\geq - \mathbb{E}\int_{Q}  s \lambda^{2} \xi |\nabla \Delta w \nabla \eta|^{2} dxdt
            - \mathbb{E}\int_{Q}  s^{5} \lambda^{6} \xi^{5} |\nabla \eta|^{4} |\nabla  w \nabla \eta|^{2} dxdt
            -( \mathcal{A}_{2} + \mathcal{B}_{1} )
    \end{align}

     {\bf Step 3}. In this step, we estimate  the stochastic terms.

    Using Divergence Theorem and noting that $ w=0 $ on $ \Sigma $, we have
    \begin{equation}\label{eq.thm31Sto1}
        2 \mathbb{E}\int_{Q} \sum_{i, j=1}^n \big(
            w_{x_{i}x_{i}x_{j}} dw
            +\Psi_{2} w_{x_{i}} \delta_{ij} dw
            +\Psi_{3}^{ij} w_{x_{i}} dw
            \big)_{x_{j}} dx
            = 0.
    \end{equation}

    Thanks to \eqref{eq.thm31S2}, noting that
    \begin{equation*}
        \lim _{t \rightarrow 0^{+}} \theta(t, \cdot)=\lim _{t \rightarrow T^{-}} \theta(t, \cdot) \equiv 0,
    \end{equation*}
we deduce that for $ s\geq C T $,
\begin{eqnarray}\label{eq.thm31Sto2}
&&- 2 \mathbb{E}\int_{Q} \sum_{i, j=1}^n (
w_{x_{i}x_{j}}dw_{x_{i}}
)_{x_{j}} dx \nonumber\\
&&=-2\mathbb{E} \int_{\Sigma}  \Big( 2 s \lambda \xi \frac{\partial \eta}{\partial \nu}  -\operatorname{div}(\nu)  \Big) \frac{\partial w}{\partial \nu} d \frac{\partial w}{\partial \nu} d\sigma  \nonumber  \\
&& =-\mathbb{E}\Big\{\! \int_{\Sigma} d \Big[  \Big( 2 s \lambda \xi \frac{\partial \eta}{\partial \nu}\!  -\operatorname{div}(\nu)  \Big)  \Big(\frac{\partial w}{\partial \nu} \Big)^{2}  \Big] d\sigma + \int_{\Sigma} \Big( 2 s \lambda \xi \frac{\partial \eta}{\partial \nu} -\operatorname{div}(\nu)\Big)_{t}  \Big(\frac{\partial w}{\partial \nu} \Big)^{2}  d\sigma dt  \\
&& \qquad\quad + \int_{\Sigma}  \Big( 2 s \lambda \xi \frac{\partial \eta}{\partial \nu}-\operatorname{div}(\nu)\Big)  \Big(d \frac{\partial w}{\partial \nu} \Big)^{2}  d\sigma\Big\}
\nonumber  \\
&&\leq \mathbb{E} \int_{\Sigma}    s^{3}  \xi^{4} O( \lambda )  \Big(\frac{\partial w}{\partial \nu} \Big)^{2}  d\sigma dt \leq   \mathcal{B}_{1}.\nonumber
\end{eqnarray}

    From \eqref{eq.pfthm2.1.S1}, and noting that $ \frac{1}{3} a^{2} \leq ( a+b+c )^{2} + b^{2}+c^{2} $ for any $a,b,c\in\dbR$, it follows that
\begin{align}
\notag &\mathbb{E}\int_{Q} \theta^{2} \sum_{i,
j=1}^n [  d h_{x_{i}x_{j}} + 2 \ell_{x_{j}}
dh_{x_{i}} +   ( \ell_{x_{i}x_{j}} +
\ell_{x_{i}} \ell_{x_{j}} ) dh ]^{2} dx dt
\\\label{eq.pfthm2.1.S13}
&\geq \frac{1}{3} \mathbb{E} \int_{Q} \theta^{2}
|\nabla^{2}g|^{2} dx - C \mathbb{E} \int_{Q}
s^{2} \lambda^{2} \xi^{2} \theta^{2} |\nabla
g|^{2} dxdt -C \mathbb{E} \int_{Q} s^{4}
\lambda^{4} \xi^{4} \theta^{2} |g|^{2} dxdt.
\end{align}

From \eqref{eq.pfthm2.1.S1} again,  we get that
    \begin{align}
        \notag
        &- \mathbb{E}\int_{Q} \theta^{2}  \Psi_{2} | d \nabla h + \nabla \ell dh|^{2} dx- 4 \mathbb{E}\int_{Q}  s^{2}\lambda^{2}\xi^{2} \theta^{2} | \nabla \eta d \nabla h + \nabla \eta \nabla \ell dh|^{2} dx
        \\\label{eq.thm31Sto3}
        &
        \geq - C \mathbb{E} \int_{Q} s^{2} \lambda^{2} \xi^{2} \theta^{2} |\nabla g|^{2} dxdt
        -C \mathbb{E} \int_{Q} s^{4} \lambda^{4} \xi^{4} \theta^{2} |g|^{2} dxdt.
    \end{align}

    For any $ t\in (0,T) $ and $ \varepsilon>0 $, we use the interpolation inequality (e.g., \cite[Theorem
    9.1]{Lions1972a}) and Young's inequality to obtain
    \begin{equation*}
        \int_{G}|\nabla(\theta g)|^{2}dx \leq \varepsilon \int_{G}|\nabla^{2}(\theta g)|^{2}dx+\frac{C}{\varepsilon} \int_{G} \theta^{2}|g|^{2}dx.
    \end{equation*}
  Take
    \begin{equation*}
        \varepsilon = \varepsilon_{2} \Big(\frac{s \lambda}{t^{1/2}(T-t)^{1/2}}\Big)^{-2},
    \end{equation*}
    where $ \varepsilon_{2} $ will be fixed later. Using
    \begin{equation*}\label{eq.pfthm2.1.S141}
        [t^{1/2}(T-t)^{1/2}]^{-1}\leq \xi \leq e^{3 \lambda |\eta|_{C(\overline{G})}}[t^{1/2}(T-t)^{1/2}]^{-1},
    \end{equation*}
    we obtain
    \begin{align*}
        \notag
         \int_{G} \theta^{2} |\nabla g|^{2}dx
         & \leq \varepsilon \int_{G}|\nabla^{2}(\theta g)|^{2}dx+\frac{C}{\varepsilon} \int_{G} \theta^{2}|g|^{2}dx+2\int_{G}|\nabla \theta|^{2} g^{2}dx
        \\
        & \leq \varepsilon_{2} s^{-2} \lambda^{-2} \!\int_{G}\! t(T\!-\!t)|\nabla^{2}(\theta g)|^{2}dx
        +C \varepsilon_{2}^{-1} s^{2} \lambda^{2} e^{C \lambda |\eta|_{C(\overline{G})}} \!\int_{G} t^{-1}(T\!-\!t)^{-1} \theta^{2} g^{2}dx,
    \end{align*}
    from which it follows that for $ s\geq CT $,
    \begin{align*}
        \int_{G} \theta^{2} |\nabla g|^{2}dx & \leq \varepsilon_{2} s^{-2} \lambda^{-2} \int_{G} t(T-t)\theta^{2}  |\nabla^{2} g|^{2}dx
        +C \varepsilon_{2}^{-1} s^{2} \lambda^{2} e^{C \lambda |\eta|_{C(\overline{G})}} \int_{G} t^{-1}(T-t)^{-1} \theta^{2} g^{2}dx.
    \end{align*}
 Hence,
    \begin{equation}\label{eq.pfthm2.1.S17}
        \int_{G} s^{2} \lambda^{2} \xi^{2} \theta^{2} |\nabla g|^{2}dx  \leq \varepsilon_{2} e^{C \lambda |\eta|_{C(\overline{G})}}  \int_{G} \theta^{2}  |\nabla^{2} g|^{2}dx
        +C \varepsilon_{2}^{-1} s^{4} \lambda^{4} e^{C \lambda |\eta|_{C(\overline{G})}} \int_{G} \xi^{4} \theta^{2} g^{2}dx.
    \end{equation}
    From \eqref{eq.pfthm2.1.S13}, \eqref{eq.thm31Sto3}, and \eqref{eq.pfthm2.1.S17},  letting $ \varepsilon_{2}>0 $ sufficiently small, it follows that     for $ s\geq CT $,
    \begin{align}
        \notag
        &\mathbb{E}\int_{Q} \theta^{2} \sum_{i, j=1}^n [  d h_{x_{i}x_{j}} + 2 \ell_{x_{j}} dh_{x_{i}} +   ( \ell_{x_{i}x_{j}} + \ell_{x_{i}} \ell_{x_{j}} ) dh ]^{2} dx
            - \mathbb{E}\int_{Q} \theta^{2}  \Psi_{2} | d \nabla h + \nabla \ell dh|^{2} dx
            \\\label{eq.pfthm2.1.S18}
             &- 4 \mathbb{E}\int_{Q}  s^{2}\lambda^{2}\xi^{2} \theta^{2} | \nabla \eta d \nabla h + \nabla \eta \nabla \ell dh|^{2} dx
            +  \mathbb{E}\int_{Q}  \Psi_{6} \theta^{2} ( dh )^{2} dx
            \\
            \notag
            & \geq
        -C e^{C \lambda |\eta|_{C(\overline{G})}} \mathbb{E} \int_{Q} s^{4} \lambda^{4} \xi^{4} \theta^{2} |g|^{2} dxdt.
    \end{align}

Combining \eqref{eq.thm31S1},
\eqref{eq.thm31S4}--\eqref{eq.thm31Sto2}, and
\eqref{eq.pfthm2.1.S18},     for $ s\geq CT $,
we obtain that
\begin{align}
\notag & 2 \mathbb{E}\int_{Q} \theta I ( - dh +
\Delta^{2} h dt ) dx
\\ \notag
& \geq 2 \mathbb{E}\int_{Q} I^{2} dxdt \!+\! 2
\mathbb{E}\int_{Q} II_{3} dx \!-\! C  \mathbb{E}
\int_{(0, T) \times G_{1}}s^{5} \lambda ^{5}
\xi^{5} |\nabla w|^{2} dxdt \!-\!C e^{C \lambda
|\eta|_{C(\overline{G})}} \mathbb{E} \int_{Q}
s^{4} \lambda^{4} \xi^{4} \theta^{2} |g|^{2}
dxdt
\\ \label{eq.thm31S6}
& \quad \!\! +  2 \mathbb{E}\int_{Q} [  3 s
\lambda^{2}\xi | \nabla \Delta w \cdot \nabla
\eta  | ^{2} + 32 s^{3} \lambda^{4} \xi^{3}  |
\nabla^{2} w \nabla \eta \nabla \eta |^{2} + 8
s^{3} \lambda^{4} \xi ^{3} |\nabla \eta |^{4} |
\Delta w|^{2}
\\  \notag
& \qquad\qquad + 22 s^{5} \lambda^{6} \xi^{5}
|\nabla \eta|^{4} | \nabla w \cdot \nabla
\eta|^{2} - 16 s^{5} \lambda^{6} \xi^{5} |\nabla
\eta|^{6} |\nabla  w|^{2} + 8 s^{7} \lambda ^{8}
\xi^{7} |\nabla \eta|^{8} w^{2} ] dxdt
\\ \notag
& \quad \!\!+\! 2 \mathbb{E}\! \int_{\Sigma}
\!\Big[ - 4 s \lambda  \xi \frac{\partial
\eta}{\partial \nu} \! \Big(  \frac{\partial
\Delta w}{\partial \nu}  \Big)^{2} \!+\! 2 s
\lambda  \xi \frac{\partial \eta}{\partial \nu}
| \nabla \Delta w |^{2}\! -\! 90 s^{5}
\lambda^{5}  \xi^{5} \! \Big( \frac{\partial
\eta}{\partial \nu} \Big) ^{5} \! \Big(
\frac{\partial w}{\partial \nu}  \Big)^{2} \!+\!
12 s^{3} \lambda^{3}  \xi^{3} \! \Big (
\frac{\partial \eta}{\partial \nu} \Big) ^{3} |
\nabla^{2} w\! \cdot\! \nu |^{2}
\\ \notag
&\quad  \quad\quad\quad \ \ - 4 s^{3}
\lambda^{3}  \xi^{3}  \Big( \frac{\partial
\eta}{\partial \nu} \Big) ^{3}  \frac{\partial
w}{\partial \nu}   \frac{\partial \Delta
w}{\partial \nu} - 2 s^{3} \lambda^{3}  \xi^{3}
\Big( \frac{\partial \eta}{\partial \nu} \Big)
^{3} | \nabla^{2} w |^{2}  \Big] d\sigma dt -
(\mathcal{A} +  \mathcal{B}).
\end{align}

 {\bf Step 4}. In this step, we deal with the boundary terms.

Denote
    \begin{align}
        \notag
        \overline{B} =& \mathbb{E}\!\int_{\Sigma}\!  \Big[
            \!-\! 4 s \lambda  \xi \frac{\partial \eta}{\partial \nu} \Big(  \frac{\partial \Delta w}{\partial \nu}  \Big)^{2}\!\!
            +\! 2 s \lambda  \xi \frac{\partial \eta}{\partial \nu} | \nabla \Delta w |^{2}
            \!-\! 90 s^{5} \lambda^{5}  \xi^{5} \Big( \frac{\partial \eta}{\partial \nu} \Big) ^{5} \Big(  \frac{\partial w}{\partial \nu}  \Big)^{2}
            \!+\! 12 s^{3} \lambda^{3}  \xi^{3}  \Big( \frac{\partial \eta}{\partial \nu} \Big) ^{3} | \nabla^{2} w \nu |^{2}
            \\
            \label{eq.thm31B}
            & \quad \quad \
        \!-\! 4 s^{3} \lambda^{3}  \xi^{3}  \Big( \frac{\partial \eta}{\partial \nu} \Big) ^{3}  \frac{\partial w}{\partial \nu}   \frac{\partial \Delta w}{\partial \nu}
        \!-\! 2 s^{3} \lambda^{3}  \xi^{3}  \Big( \frac{\partial \eta}{\partial \nu} \Big) ^{3} | \nabla^{2} w |^{2}  \Big] d\sigma dt .
    \end{align}
    From \eqref{eq.estimatesOrder} and  \eqref{eq.thm31S2},     for $ s\geq CT $, we have
    \begin{align}
        \notag
        &\mathbb{E}   \int_{\Sigma} 12 s^{3} \lambda^{3}  \xi^{3}  \Big( \frac{\partial \eta}{\partial \nu} \Big) ^{3} | \nabla^{2} w   \nu |^{2} d \sigma dt
        \\ \notag
            & =  \mathbb{E} \int_{\Sigma}\! \Big[
                   48 s^{5} \lambda^{5}  \xi^{5}  \Big( \frac{\partial \eta}{\partial \nu} \Big) ^{5} \Big( \frac{\partial w}{\partial \nu} \Big)^{2}\!
                 -\!48 s^{4} \lambda^{4}  \xi^{4}  \Big( \frac{\partial \eta}{\partial \nu} \Big) ^{4} \operatorname{div}(\nu) \Big( \frac{\partial w}{\partial \nu} \Big)^{2}                 
                 \\ \label{eq.thm31BE1}
                & \quad \quad \quad \ \ \!+\! 12 s^{3} \lambda^{3}  \xi^{3}  \Big( \frac{\partial \eta}{\partial \nu} \Big) ^{3} \operatorname{div}(\nu)^2 \Big( \frac{\partial w}{\partial \nu} \Big)^{2}
                 + 12 s^{3} \lambda^{3}  \xi^{3}  \Big( \frac{\partial \eta}{\partial \nu} \Big) ^{3} \Big| \frac{\partial^{2} w}{\partial \tau \partial \nu} \Big|^{2}
                \Big] d \sigma dt
            \\ \notag
            & \geq \mathbb{E} \int_{\Sigma} \Big[
                48 s^{5} \lambda^{5}  \xi^{5}  \Big( \frac{\partial \eta}{\partial \nu} \Big) ^{5} \Big( \frac{\partial w}{\partial \nu} \Big)^{2}
                + 12 s^{3} \lambda^{3}  \xi^{3}  \Big( \frac{\partial \eta}{\partial \nu} \Big) ^{3} \Big| \frac{\partial^{2} w}{\partial \tau \partial \nu} \Big|^{2}
            \Big] d \sigma dt-\mathcal{B}_{1},
    \end{align}
    where
    \begin{equation*}
        \tau= \frac{\nabla^{2}w \nu - ( \nabla^{2} w \nu \cdot \nu  ) \nu}{|\nabla^{2}w \nu - ( \nabla^{2} w \nu \cdot \nu  ) \nu|}.
    \end{equation*}

    By  \eqref{eq.estimatesOrder} and  \eqref{eq.thm31S2}, using the same argument as \eqref{eq.thm31BE1}, for $ s\geq CT $, we have
    \begin{align}
        \notag
        &\mathbb{E}   \int_{\Sigma} - 2 s^{3} \lambda^{3}  \xi^{3}  \Big( \frac{\partial \eta}{\partial \nu} \Big) ^{3} | \nabla^{2} w |^{2} d \sigma dt\\\notag
            & =  \mathbb{E}\! \int_{\Sigma}\! \Big[
                 - 8 s^{5} \lambda^{5}  \xi^{5}  \Big( \frac{\partial \eta}{\partial \nu} \Big) ^{5}\! \Big( \frac{\partial w}{\partial \nu} \Big)^{2}
                 \!+ 8 s^{4} \lambda^{4}  \xi^{4}  \Big( \frac{\partial \eta}{\partial \nu} \Big) ^{4} \operatorname{div}(\nu)  \Big( \frac{\partial w}{\partial \nu} \Big)^{2}
                 \!- 2 s^{3} \lambda^{3}  \xi^{3}  \Big( \frac{\partial \eta}{\partial \nu} \Big) ^{3}\! \operatorname{div}(\nu)^2 \Big( \frac{\partial w}{\partial \nu} \Big)^{2}
                 \\\label{eq.thm31BE2}
                 & \quad  \quad \quad \quad \!
                 - 4 s^{3} \lambda^{3}  \xi^{3}  \Big( \frac{\partial \eta}{\partial \nu} \Big) ^{3} \Big| \frac{\partial^{2} w}{\partial \tau \partial \nu} \Big|^{2}
                 -2  \Big( \frac{\partial w}{\partial \nu} \Big)^{2} \sum_{i, j=1}^n |\tau^{i}_{x_{j}} \nu^{i}\nu^{j}|^{2}
                 \Big] d \sigma dt
            \\ \notag
            & \geq \mathbb{E} \int_{\Sigma} \Big[
                - 8 s^{5} \lambda^{5}  \xi^{5}  \Big( \frac{\partial \eta}{\partial \nu} \Big) ^{5} \Big( \frac{\partial w}{\partial \nu} \Big)^{2}
                - 4 s^{3} \lambda^{3}  \xi^{3}  \Big( \frac{\partial \eta}{\partial \nu} \Big) ^{3} \Big| \frac{\partial^{2} w}{\partial \tau \partial \nu} \Big|^{2}
            \Big] d \sigma dt-\mathcal{B}_{1}.
    \end{align}

By  \eqref{eq.estimatesOrder} , for $ s\geq CT $, direct computation gives
    \begin{align}
        \notag &
    \mathbb{E}   \int_{\Sigma} 2 s \lambda  \xi \frac{\partial \eta}{\partial \nu} | \nabla \Delta w |^{2} d \sigma dt\\
     \notag  & =  \mathbb{E} \int_{\Sigma} \Big[
                 2 s \lambda  \xi  \frac{\partial \eta}{\partial \nu} \Big( \frac{\partial \Delta w}{\partial \nu} \Big)^{2}
                 + 8 s^{3} \lambda^{3}  \xi^{3}  \Big( \frac{\partial \eta}{\partial \nu} \Big) ^{3}  \Big| \frac{\partial^{2} w}{\partial \tau \partial \nu} \Big|^{2}
                 \\ \label{eq.thm31BE3}
                 & \quad \quad \quad \quad
                 + 16 s^{3} \lambda^{3}  \xi^{3}  \Big( \frac{\partial \eta}{\partial \nu} \Big) ^{2} \frac{\partial w}{\partial \nu} \frac{\partial^{2} \eta}{\partial \tau \partial \nu}  \frac{\partial^{2} w}{\partial \tau \partial \nu}
                 + 8 s^{3} \lambda^{3}  \xi^{3}  \frac{\partial \eta}{\partial \nu}  \Big(  \frac{\partial w}{\partial \nu}  \Big)^{2}  \Big| \frac{\partial^{2} \eta}{\partial \tau \partial \nu} \Big|^{2}
                \Big] d \sigma dt
            \\ \notag 
            & \geq \mathbb{E} \int_{\Sigma} \Big[
                2 s \lambda  \xi  \frac{\partial \eta}{\partial \nu} \Big( \frac{\partial \Delta w}{\partial \nu} \Big)^{2}
                + 7 s^{3} \lambda^{3}  \xi^{3}  \Big( \frac{\partial \eta}{\partial \nu} \Big) ^{3}  \Big| \frac{\partial^{2} w}{\partial \tau \partial \nu} \Big|^{2}
            \Big] d \sigma dt-\mathcal{B}_{1}.
    \end{align}
Noting that $\frac{\partial \eta}{\partial \nu}<0$ on $\Sigma$, we have
    \begin{align}
        \label{eq.thm31BE4}
        &\mathbb{E}   \int_{\Sigma} - 4 s^{3} \lambda^{3}  \xi^{3}  \Big( \frac{\partial \eta}{\partial \nu} \Big) ^{3}  \frac{\partial w}{\partial \nu}   \frac{\partial \Delta w}{\partial \nu} d \sigma dt
        \geq \mathbb{E} \int_{\Sigma} \Big[
            16 s^{5} \lambda^{5}  \xi^{5}  \Big( \frac{\partial \eta}{\partial \nu} \Big) ^{5} \Big( \frac{\partial w}{\partial \nu} \Big)^{2}
            + s \lambda  \xi \frac{\partial \eta}{\partial \nu} \Big(  \frac{\partial \Delta w}{\partial \nu}  \Big)^{2}
        \Big] d \sigma dt.
    \end{align}
    Combining \eqref{eq.thm31B}--\eqref{eq.thm31BE4}, for $ s\geq CT $, we have
    \begin{align}
        \label{eq.thm31BE5}
        \overline{B} \geq & \mathbb{E} \int_{\Sigma}  \Big[
            -  s \lambda  \xi \frac{\partial \eta}{\partial \nu} \Big(  \frac{\partial \Delta w}{\partial \nu}  \Big)^{2}
            -34 s^{5} \lambda^{5}  \xi^{5}  \Big( \frac{\partial \eta}{\partial \nu} \Big) ^{5} \Big( \frac{\partial w}{\partial \nu} \Big)^{2}
        + 15 s^{3} \lambda^{3}  \xi^{3}  \Big( \frac{\partial \eta}{\partial \nu} \Big) ^{3}  \Big| \frac{\partial^{2} w}{\partial \tau \partial \nu} \Big|^{2}
        \Big] d\sigma dt
        - \mathcal{B}_{1}.
    \end{align}
We now estimate the last term. Thanks to the
interpolation inequality(e.g., \cite[Theorem
9.1]{Lions1972a}), \eqref{eq.eta}, \eqref{eq.alphastar}, 
Young's inequality, and noting that $\frac{\partial \eta}{\partial \nu}<0$ on $\Sigma$,  we find
\begin{align*}
\notag &\mathbb{E}  \int_{\Sigma} 15 s^{3}
\lambda^{3}  \xi^{3}  \Big( \frac{\partial
\eta}{\partial \nu} \Big) ^{3}  \Big|
\frac{\partial^{2} w}{\partial \tau \partial
\nu} \Big|^{2} d\sigma dt\\ \notag  & \geq  - C
\mathbb{E} \int_{0}^{T} s^{3} \lambda^{3}
\xi^{3} \Big| \frac{\partial w}{\partial \nu}
\Big|^{2}_{H^{1}(\partial G)}  dt
\\ 
& \geq  - C \mathbb{E} \int_{0}^{T} s^{3}
\lambda^{3} \xi^{3} \Big| \frac{\partial
w}{\partial \nu} \Big|^{6/5}_{L^{2}(\partial G)}
\Big| \frac{\partial w}{\partial \nu}
\Big|^{4/5}_{H^{5/2}(\partial G)}  dt
\\\notag
& \geq \mathbb{E} \int_{\Sigma} s^{5}
\lambda^{5}  \xi^{5}  \Big( \frac{\partial
\eta}{\partial \nu} \Big) ^{5} \Big(
\frac{\partial w}{\partial \nu} \Big)^{2} d
\sigma dt - C \mathbb{E} \int_{0}^{T} e^{2s
\alpha_{\star}} |h|^{2}_{H^{4}(G)} dt.
\end{align*}
This, together with \eqref{eq.thm31BE5}, implies that  for $ s\geq CT $,
\begin{align}
        \label{eq.thm31BE}
        \overline{B} \geq & \mathbb{E} \int_{\Sigma}  \Big[
            -  s \lambda  \xi \frac{\partial \eta}{\partial \nu} \Big(  \frac{\partial \Delta w}{\partial \nu}  \Big)^{2}
            -33 s^{5} \lambda^{5}  \xi^{5}  \Big( \frac{\partial \eta}{\partial \nu} \Big) ^{5} \Big( \frac{\partial w}{\partial \nu} \Big)^{2}
        \Big] d\sigma dt
        - C \mathbb{E} \int_{0}^{T} e^{2s \alpha_{\star}}  |h|^{2}_{H^{4}(G)} dt
        - \mathcal{B}_{1}.
    \end{align}

    By \eqref{eq.eta} and \eqref{eq.estimatesOrder},  for $ \lambda \geq C $ and  $ s\geq CT $, we have
    \begin{equation}\label{eq.thm31B1}
        -\mathcal{B}_{1} \geq \mathbb{E} \int_{\Sigma}  s^{5} \lambda^{5}  \xi^{5}  \Big( \frac{\partial \eta}{\partial \nu} \Big) ^{5} \Big( \frac{\partial w}{\partial \nu} \Big)^{2} d\sigma dt.
    \end{equation}

    Combining \eqref{eq.thm31S6}, \eqref{eq.thm31B}, \eqref{eq.thm31BE}, and  \eqref{eq.thm31B1},     for $ \lambda \geq C $ and  $ s\geq CT $,
     we get
    \begin{align}
        \notag
        & 2 \mathbb{E}\int_{Q} \theta I ( - dh + \Delta^{2} h dt ) dx
       \\ \notag
        & \geq
       2 \mathbb{E}\!\int_{Q}\! I^{2} dxdt
       + 2 \mathbb{E}\!\int_{Q}\! II_{3} dx
       -C e^{C \lambda |\eta|_{C(\overline{G})}} \mathbb{E}\! \int_{Q}\! s^{4} \lambda^{4} \xi^{4} \theta^{2} |g|^{2} dxdt
       - C \mathbb{E} \int_{0}^{T}\! e^{2s \alpha_{\star}} |h|^{2}_{H^{4}(G)} dt
       \\ \label{eq.thm31M2}
       & \quad
       + 2 \mathbb{E}\int_{Q} \big[
           3 s \lambda^{2}\xi   | \nabla \Delta w \cdot \nabla \eta  | ^{2}
           + 32 s^{3} \lambda^{4} \xi^{3}  | \nabla^{2} w  \nabla \eta \nabla \eta   |^{2}
           + 8 s^{3} \lambda^{4} \xi ^{3} |\nabla \eta |^{4} | \Delta w|^{2}
           \\ \notag
           & \qquad\qquad
           + 22 s^{5} \lambda^{6} \xi^{5} |\nabla \eta|^{4} | \nabla w \cdot \nabla \eta|^{2}
           - 16 s^{5} \lambda^{6} \xi^{5} |\nabla \eta|^{6} |\nabla  w|^{2}
           + 8 s^{7} \lambda ^{8} \xi^{7} |\nabla \eta|^{8} w^{2}\big] dxdt
       \\ \notag
       & \quad
       - \mathbb{E} \int_{\Sigma}  s^{5} \lambda^{5}  \xi^{5}  \Big( \frac{\partial \eta}{\partial \nu} \Big) ^{5} \Big( \frac{\partial w}{\partial \nu} \Big)^{2} d\sigma dt
       - (\mathcal{A} +  \mathcal{B}_{2} + \mathcal{B}_{3}  )
       - C  \mathbb{E}  \int_{(0, T) \times G_{1}}s^{5} \lambda ^{5} \xi^{5} |\nabla w|^{2} dxdt.
   \end{align}

 {\bf Step 5}. In this step,   we sort out the results of the previous steps and get the estimate of $ h $.

First, we have
    \begin{align}
        \notag
        &  \mathbb{E}\int_{Q} \big[
                8 s^{3} \lambda^{4} \xi ^{3} |\nabla \eta |^{4} | \Delta w|^{2}
                - 16 s^{5} \lambda^{6} \xi^{5} |\nabla \eta|^{6} |\nabla  w|^{2}
                + 8 s^{7} \lambda ^{8} \xi^{7} |\nabla \eta|^{8} w^{2}
            \big] dxdt
            \\ \label{eq.thm31S5E1}
            &= 2 \mathbb{E} \int_{Q} \big[8 s^{-1} \xi^{-1} (
                s^{4} \lambda^{4} \xi ^{4} |\nabla \eta |^{4} | w|
                +   s^{2} \lambda^{2} \xi ^{2} |\nabla \eta |^{2} | \Delta w|
             ) ^{2}
             - 16 s^{5} \lambda^{6} \xi ^{5} |\nabla \eta |^{6} w \Delta w
            \\ \notag
            & \quad \quad \quad \quad \
             - 16 s^{5} \lambda^{6} \xi ^{5} |\nabla \eta |^{6} |\nabla  w|^{2}
             \big] dxdt
            \\\notag
            &\geq 2 \mathbb{E} \int_{Q} \big[  8 s^{-1} \xi^{-1} (
                s^{4} \lambda^{4} \xi ^{4} |\nabla \eta |^{4} | w|
                +   s^{2} \lambda^{2} \xi ^{2} |\nabla \eta |^{2} | \Delta w|
             )^{2}\big]- \mathcal{A}_{1}.
    \end{align}
   Noting that $ ( a + b + c )^{2}  \leq 8 a^{2} + 2b^{2} + 3c^{2} $ for $a,b,c\in\dbR$, we find
    \begin{align}
        \notag
        &2 \mathbb{E}  \int_{Q}  s^{3} \lambda^{4} \xi ^{3} \theta^{2} |\nabla \eta |^{4} | \Delta h|^{2}  dxdt
        \\\notag
        &=
        2 \mathbb{E} \int_{Q}    s^{-1} \xi^{-1} \big(
                s^{2} \lambda^{2} \xi ^{2} |\nabla \eta |^{2}  \Delta w
                - 2 s^{3} \lambda^{3} \xi ^{3} |\nabla \eta |^{2} \nabla \eta \nabla w
                + s^{4} \lambda^{4} \xi ^{4} |\nabla \eta |^{4} w
                - s^{3} \lambda^{4} \xi ^{3} |\nabla \eta |^{4} w
                \\\label{eq.thm31S5E2}
                &
                \quad  \quad  \quad  \quad  \quad \quad \quad \quad
                - s^{3} \lambda^{3} \xi ^{3} |\nabla \eta |^{2} \Delta \eta w
            \big)^{2} dxdt
        \\\notag
        & \leq
        \begin{aligned}[t]
            2 \mathbb{E} \int_{Q}  [ & 8 s^{-1} \xi^{-1}  (
            s^{4} \lambda^{4} \xi ^{4} |\nabla \eta |^{4} | w|
            +   s^{2} \lambda^{2} \xi ^{2} |\nabla \eta |^{2} | \Delta w|
         ) ^{2}
         + 8 s^{5} \lambda^{6} \xi^{5} |\nabla \eta|^{4} | \nabla w \cdot \nabla \eta|^{2}
          ] dxdt
         + \mathcal{A}_{1}.
        \end{aligned}
    \end{align}

    Using \eqref{eq.pfthmFI.5} and \eqref{eq.estimatesOrder},  we obtain that  for $ s\geq C(T^{1/2}+T) $,
    \begin{align}
        \label{eq.thm31S5E3}
        2 \mathbb{E} \int_{Q} II_{3}dx  \geq  -  \mathbb{E} \int_{Q} I^{2} dx dt  - \mathcal{A}.
    \end{align}

    Thanks to \eqref{eq.eta}, we get
    \begin{equation}\label{eq.thm31S5E6}
        \mathbb{E}  \int_{(0, T) \times G_{1}} s^{5} \lambda ^{5} \xi^{5} |\nabla w|^{2} dxdt
        \leq C    \mathbb{E}  \int_{(0, T) \times G_{1}}   (s^{7} \lambda^{7} \xi^{7}\theta^{2}| h|^{2}+s^{5} \lambda^{5} \xi^{5}\theta^{2}|\nabla h|^{2}  )dx dt.
    \end{equation}

    For the left hand side of \eqref{eq.thm31M2}, we have
    \begin{equation}\label{eq.thm31S5E7}
         2 \mathbb{E}\int_{Q} \theta I ( - dh + \Delta^{2} h dt ) dx
        \leq \mathbb{E} \int_{Q} I^{2} dxdt + \mathbb{E} \int_{Q} \theta^{2} f^{2} dx dt.
    \end{equation}

    From \eqref{eq.thm31M2}--\eqref{eq.thm31S5E7},  for $ \lambda \geq C $ and $ s\geq C(T^{1/2}+T) $, we obtain that
    \begin{align}
        \notag
        &\mathbb{E} \int_{Q} \theta^{2} f^{2} dx dt  + C e^{C \lambda |\eta|_{C(\overline{G})}} \mathbb{E} \int_{Q} s^{4} \lambda^{4} \xi^{4} \theta^{2} |g|^{2} dxdt
        +  C \Big(\mathcal{A}+\mathcal{B}_{2}+\mathcal{B}_{3}
        +  |e^{s \alpha_{\star}} h|_{L_{\mathbb{F}}^{2}(0, T ; H^{4}(G))}^{2}  \Big)
            \\\label{eq.thm31S5E8}
            & +C\Big[ \mathbb{E} \int_{(0, T) \times G_{1}} \big (s^{7} \lambda^{7} \xi^{7}\theta^{2}| h|^{2}
            +s^{5} \lambda^{5} \xi^{5}\theta^{2}|\nabla h|^{2}
            +s^{3} \lambda^{4} \xi^{3}\theta^{2}|\Delta h|^{2}  \big )d x d t\Big]   \\
            \notag
            & \geq  s^{3} \lambda^{4} \mathbb{E} \int_{Q} \xi^{3} \theta^{2}|\Delta h|^{2} d x d t.
    \end{align}
 Thanks to Remark \ref{remark.LaplaceCarleman}, for $ \lambda \geq  C $ and $ s\geq C(T^{1/2}+T) $, we have
\begin{align}
    \notag
    &s^{4}  \lambda^{6} \mathbb{E} \int_{Q}  \xi^{4}  \theta^{2} (
            s^{2}  \lambda^{2} \xi^{2}|h|^{2}
            +| \nabla h|^{2}
        )dx dt
    \\\label{eq.thm31S5E8_1}
    &
    \leq
    C \Big(
            \mathbb{E} \int_{(0,T)\times G_{1}} s^{6}  \lambda^{8} \xi^{6} \theta^{2} |h|^{2} dx dt
            + \mathbb{E} \int_{Q} s^{3}  \lambda^{4} \xi^{3} \theta^{2} |\Delta h|^{2} dx dt
            \Big).
\end{align}
Hence, combining \eqref{eq.estimatesOrder}, \eqref{eq.pfthm2.1.S8}, \eqref{eq.thm31S5E8} and \eqref{eq.thm31S5E8_1},   for $ \lambda \geq  C $ and $ s\geq C(T^{1/2}+T) $, we obtain
    \begin{align}
        \notag
        & \mathbb{E} \int_{Q} \theta^{2} f^{2} dx dt  + C e^{C \lambda |\eta|_{C(\overline{G})}} \mathbb{E} \int_{Q} s^{4} \lambda^{4} \xi^{4} \theta^{2} |g|^{2} dxdt
        +C \Big(\mathcal{A}_{3}+\mathcal{B}_{2}+\mathcal{B}_{3}+|e^{s \alpha_{\star}} h|_{L_{\mathbb{F}}^{2}(0, T ; H^{4}(G))}^{2} \Big)
        \\\label{eq.thm31S5E8_2}
        & +C\Big[ \mathbb{E} \int_{(0, T) \times G_{1}}  (s^{7} \lambda^{8} \xi^{7}\theta^{2}| h|^{2}+s^{5} \lambda^{5} \xi^{5}\theta^{2}|\nabla h|^{2} +s^{3} \lambda^{4} \xi^{3}\theta^{2}|\Delta h|^{2}   )d x d t\Big]   \\
        \notag
        & \geq
         \mathbb{E} \int_{Q}  (s^{6} \lambda^{8}  \xi^{6} \theta^{2}| h|^{2}+s^{4} \lambda^{6}  \xi^{4} \theta^{2}|\nabla h|^{2} +s^{3} \lambda^{4}  \xi^{3} \theta^{2}|\Delta h|^{2}   ) d x d t.
    \end{align}

    With the help of Lemma \ref{lemma.auxiliaryEstimates} and \eqref{eq.estimatesOrder}, we can absorb $ \mathcal{B}_{2}, \mathcal{B}_{3} $, and
    $ |e^{s \alpha_{\star}} h|_{L_{\mathbb{F}}^{2}(0, T ; H^{4}(G))}^{2} $.

    Thanks to \eqref{eq.thm31S5E8_2}, for $ \lambda \geq  C $ and $ s\geq C(T^{1/2}+T) $, we have
    \begin{align}
        \notag
        & C  \mathbb{E} \int_{Q} \theta^{2} f^{2} dx dt  + C e^{C \lambda |\eta|_{C(\overline{G})}} \mathbb{E} \int_{Q} s^{4} \lambda^{4} \xi^{4} \theta^{2} |g|^{2} dxdt+C \mathcal{A}_{3}
         \\\label{eq.thm31S5E10_1}
            &
            +C\Big[ \mathbb{E} \int_{(0, T) \times G_{1}}  (s^{7} \lambda^{8} \xi^{7}\theta^{2}| h|^{2}+s^{5} \lambda^{5} \xi^{5}\theta^{2}|\nabla h|^{2} +s^{3} \lambda^{4} \xi^{3}\theta^{2}|\Delta h|^{2}   )d x d t\Big]
            \\\notag
            &\geq
             \mathbb{E} \int_{Q}  (s^{6} \lambda^{8}  \xi^{6} \theta^{2}| h|^{2}+s^{4} \lambda^{6}  \xi^{4} \theta^{2}|\nabla h|^{2} +s^{3} \lambda^{4}  \xi^{3} \theta^{2}|\Delta h|^{2}   ) d x d t.
    \end{align}

    Next, we estimate $ \mathcal{A}_{3} $.
 Letting $ \tilde{h} = s \lambda^{2} \xi e^{s \alpha} h $, we have
    \begin{equation}\label{eq.thm31S5E11}
        \mathbb{E} \int_{Q} | \Delta \tilde{h} |^{2} dxdt
        \leq C \mathbb{E} \int_{Q}  (s^{6} \lambda^{8}  \xi^{6} \theta^{2}| h|^{2}+s^{4} \lambda^{6}  \xi^{4} \theta^{2}|\nabla h|^{2} + s^{2} \lambda^{4}  \xi^{2} \theta^{2}| \Delta h|^{2} )dxdt.
    \end{equation}
    Since $ \tilde{h} =0 $ on $ \Sigma $, it follows that
    \begin{equation}\label{eq.thm31S5E12}
         |\tilde{h}|^{2}_{L^{2}_{\mathbb{F}}( 0,T; H^{2}(G) )} \leq C |\Delta \tilde{h}|^{2}_{L^{2}_{\mathbb{F}}( 0,T; L^{2}(G) )}.
    \end{equation}

    Noting that
    \begin{align*}
        \mathbb{E} \int_{Q}  s^{2} \lambda^{4}  \xi^{2} \theta^{2}| \nabla^{2} h|^{2}  dxdt
        \leq
        C |\tilde{h}|^{2}_{L^{2}_{\mathbb{F}}( 0,T; H^{2}(G) )}
        + C \mathbb{E} \int_{Q}  (
            s^{6} \lambda^{8}  \xi^{6} \theta^{2}| h|^{2}
        +   s^{4} \lambda^{6}  \xi^{4} \theta^{2}|\nabla h|^{2}  )dxdt,
    \end{align*}
    by \eqref{eq.estimatesOrder}, \eqref{eq.pfthm2.1.S8}, \eqref{eq.thm31S5E10_1}--\eqref{eq.thm31S5E12}, for $ \lambda \geq  C $ and $ s\geq C(T^{1/2}+T) $, we obtain
    \begin{align}
        \notag
        &C \mathbb{E} \int_{Q} \theta^{2} f^{2} dx dt  + C e^{C \lambda |\eta|_{C(\overline{G})}} \mathbb{E} \int_{Q} s^{4} \lambda^{4} \xi^{4} \theta^{2} |g|^{2} dxdt\\\label{eq.thm31S5E13}
            & +C\Big[ \mathbb{E} \int_{(0, T) \times G_{1}}  (s^{7} \lambda^{8} \xi^{7}\theta^{2}| h|^{2}+s^{5} \lambda^{5} \xi^{5}\theta^{2}|\nabla h|^{2} +s^{3} \lambda^{4} \xi^{3}\theta^{2}|\Delta h|^{2}   )d x d t\Big]   \\\notag
            & \geq
             \mathbb{E} \int_{Q} \Big (s^{6} \lambda^{8}  \xi^{6} \theta^{2}| h|^{2}+s^{4} \lambda^{6}  \xi^{4} \theta^{2}|\nabla h|^{2} +s^{3} \lambda^{4}  \xi^{3} \theta^{2}|\Delta h|^{2}
             + s^{2} \lambda^{4}  \xi^{2} \theta^{2}| \nabla^{2} h|^{2}  \Big ) d x d t.
    \end{align}

    We are now ready to estimate the local terms on $ \nabla h $ and $ \Delta h $. Let us introduce a cut-off function $ \psi $ such that
    \begin{equation}\label{eq.pfthm2.1.S31}
        \psi \in C_{0}^{\infty}(G_{0}), \quad \psi=1 \text { in } G_{1},  \quad 0 \leq \psi \leq 1.
    \end{equation}
    Using It\^o's formula and \eqref{eq.pfthm2.1.S1}, we get
    \begin{align*}
        d ( \psi^{4} \xi^{3} \theta^{2}h^{2}  )
         = ( 2s\psi^{4} \xi^{3} \alpha_{t}+3\psi^{4}\xi^{2}\xi_{t}  )\theta^{2}h^{2} dt+  \psi^{4} \xi^{3}  \theta^{2}g^{2}dt
              +2\psi^{4} \xi^{3}  \theta^{2} hdh.
    \end{align*}
    Hence,
    \begin{align}
        \notag
        0=&\mathbb{E} \int_{Q_{0}}  ( 2s\psi^{4} \xi^{3} \alpha_{t}+3\psi^{4}\xi^{2}\xi_{t}  )\theta^{2}h^{2} dxdt
        +\mathbb{E} \int_{Q_{0}} \psi^{4} \xi^{3} \theta^{2}g^{2}dxdt
        + \mathbb{E} \int_{Q_{0}}  2\psi^{4} \xi^{3} \theta^{2} h\Delta^{2} h dxdt
        \\ \label{eq.pfthm2.1.S33}
        &
        - \mathbb{E} \int_{Q_{0}}  2\psi^{4} \xi^{3} \theta^{2} h f dxdt.
    \end{align}
    From \eqref{eq.pfthm2.1.S31} and integration by parts, it follows that
\begin{align}
\notag &\mathbb{E} \int_{Q_{0}}
s^{3}\lambda^{4}\xi^{3} \psi^{4} \theta^{2}
|\Delta h|^{2} dx dt\\
& \leq \mathbb{E} \int_{Q_{0}}  s^{3}
\lambda^{4} \xi^{3} \psi^{4}\theta^{2}
h\Delta^{2} h dxdt + C \mathbb{E} \int_{Q_{0}}
s^{5}\lambda^{6}\xi^{5} \psi^{4} \theta^{2}
|\nabla h|^{2} dxdt
\\ \label{eq.pfthm2.1.S34}
\notag &\quad + C \mathbb{E} \int_{Q_{0}}
s^{7}\lambda^{8} \xi^{7} \theta^{2} h^{2} dxdt
        + C \mathbb{E} \int_{Q}  s^{3}\lambda^{4}\xi^{3}  \theta^{2} ( |h|^{2} + |\nabla h|^{2} ) dxdt.
    \end{align}
    For $ s\geq CT $, we also have
    \begin{equation}\label{eq.pfthm2.1.S35}
         \Big| \mathbb{E} \int_{Q_{0}}  2 s^{3}\lambda^{4}\xi^{3} \psi^{4}  \theta^{2} h f dxdt  \Big|
        \leq
        C \mathbb{E} \int_{Q_{0}} s^{6}\lambda^{8} \xi^{6} \theta^{2} h^{2} dxdt
        +C\mathbb{E} \int_{Q}  \theta^{2} f^{2} dxdt 
    \end{equation}
    and
    \begin{equation}\label{eq.pfthm2.1.S36}
         \Big| \mathbb{E} \int_{Q_{0}}  s^{3}\lambda^{4}  ( 2s\psi^{4} \xi^{3} \alpha_{t}+3\psi^{4}\xi^{2}\xi_{t}  )\theta^{2}h^{2} dxdt  \Big|
        \leq
        C   \mathbb{E} \int_{Q_{0}} s^{7}\lambda^{8} \xi^{7} \theta^{2} h^{2} dxdt.
    \end{equation}

    Using integration by parts, we obtain
    \begin{align}
        \label{eq.pfthm2.1.S38}
        \mathbb{E} \int_{Q_{0}}  s^{5}\lambda^{6}\xi^{5} \psi^{4} \theta^{2} |\nabla h|^{2} dxdt
        \leq \varepsilon \mathbb{E} \int_{Q_{0}}  s^{3}\lambda^{4}\xi^{3} \psi^{4} \theta^{2} |\Delta h|^{2} dx dt +    C\varepsilon^{-1}\mathbb{E} \int_{Q_{0}} s^{7}\lambda^{8} \xi^{7} \theta^{2} h^{2} dxdt.
    \end{align}

    From \eqref{eq.pfthm2.1.S31}--\eqref{eq.pfthm2.1.S38}, letting $ \varepsilon>0 $ small enough,  for $ \lambda \geq  C $ and $ s\geq CT $, it follows that
    \begin{align}
        \notag
        &\mathbb{E}  \int_{(0,T)\times G_{1}}   (s^{5} \lambda^{5} \xi^{5}\theta^{2}|\nabla h|^{2} +s^{3} \lambda^{4} \xi^{3}\theta^{2}|\Delta h|^{2} )dxdt
        \\\label{eq.pfthm2.1.S39}
        & \leq
        C \Big[\mathbb{E} \int_{Q}  \theta^{2} f^{2} dxdt
        + \mathbb{E} \int_{Q_{0}} s^{7}\lambda^{8} \xi^{7} \theta^{2} h^{2} dxdt
        +  \mathbb{E} \int_{Q}  s^{3}\lambda^{4}\xi^{3}  \theta^{2} ( |h|^{2} + |\nabla h|^{2} ) dxdt
        \\ \notag
        & \quad
        +  e^{C \lambda |\eta|_{C(\overline{G})}} \mathbb{E} \int_{Q} s^{4} \lambda^{4} \xi^{4} \theta^{2} |g|^{2} dxdt\Big].
    \end{align}

    Combining \eqref{eq.thm31S5E13}  and \eqref{eq.pfthm2.1.S39}, for $ \lambda \geq  C $ and $ s\geq C(T^{1/2}+T) $, we deduce
    \begin{align}
        \notag
        &
        C \Big[\mathbb{E} \int_{Q} \theta^{2} f^{2} dx dt  +  e^{C \lambda |\eta|_{C(\overline{G})}} \mathbb{E} \int_{Q} s^{4} \lambda^{4} \xi^{4} \theta^{2} |g|^{2} dxdt + \mathbb{E} \int_{Q_{0}} s^{7} \lambda^{8} \xi^{7}\theta^{2}| h|^{2} d x d t\Big] \\\notag
            & \geq
             \mathbb{E} \int_{Q} \big (s^{6} \lambda^{8}  \xi^{6} \theta^{2}| h|^{2}+s^{4} \lambda^{6}  \xi^{4} \theta^{2}|\nabla h|^{2} +s^{3} \lambda^{4}  \xi^{3} \theta^{2}|\Delta h|^{2}
             + s^{2} \lambda^{4}  \xi^{2} \theta^{2}| \nabla^{2} h|^{2}  \big ) d x d t.
    \end{align}
  By a density argument, we obtain \eqref{eq.carlemanBackward}.
\end{proof}

\section{Proof of Theorem \ref{thm.observabilityForBackwardEquation}}\label{section.ObserAndControl}

In this section, we use the Carleman estimate \eqref{eq.carlemanBackward} to prove Theorem \ref{thm.observabilityForBackwardEquation}.

\begin{proof}[Proof of Theorem \ref{thm.observabilityForBackwardEquation}]
	We first consider  the case $ T \in (0,1] $.
	Thanks to \eqref{eq.carlemanBackward}, and choosing  $ \lambda=C $ and $ s= C(1+r_{1}^{2})$, we deduce that
	\begin{align}
		\label{eq.pfThm1.2S1}
		\mathbb{E} \int_{Q} \xi^{6} \theta^{2}| z |^{2}  d x d t
		\leq
		C (1+r_{1}^{2})\Big(   \mathbb{E} \int_{Q_{0}}  \xi^{7}\theta^{2}| z |^{2} d x d t
		+  \mathbb{E} \int_{Q} \xi^{4} \theta^{2} |a_{3} z + Z |^{2} dxdt \Big)
		.
	\end{align}
	Recalling \eqref{eq.alpha}, it follows from \eqref{eq.pfThm1.2S1} that
	\begin{align*}
		\notag
		&\inf\limits_{x \in G}   (e^{2s\alpha(T/4,x)} \xi^{6}(T/2,x)    )  \mathbb{E} \int_{T/4}^{3T/4} \int_{G} z^{2} d x d t  \leq \mathbb{E} \int_{Q} e^{2s\alpha} \xi^{6} z^{2} d x d t
		\\
		& \leq C \Big[ \mathbb{E} \int_{Q_{0}} e^{2s\alpha} \xi^{7} z^{2} d x d t+ \mathbb{E} \int_{Q} e^{2s\alpha} \xi^{4}(a_{3}z+Z)^{2} d x d t \Big]
		\\
		\notag
		& \leq \sup\limits_{(t,x)\in Q} \big[e^{2s\alpha(t,x)} (\xi^{7}(t,x) +  \xi^{4}(t,x) )\big]
		\cdot C(1+r_{1}^{2}) \Big [ \mathbb{E} \int_{Q_{0}} z^{2} d x d t+ \mathbb{E} \int_{Q} (a_{3}z+Z)^{2} d x d t \Big].
	\end{align*}
	Therefore,
	\begin{align}\label{eq.pfThm1.2S3}
		\mathbb{E} \int_{T/4}^{3T/4} \int_{G} z^{2} d x d t \leq C e^{C (1+r_{1}^{2})T^{-1}} \Big [ \mathbb{E} \int_{Q_{0}} z^{2} d x d t+ \mathbb{E} \int_{Q} (a_{3}z+Z)^{2} d x d t \Big ].
	\end{align}
	
	Thanks to It\^o's formula, we find that for $ 0 \leq t_{1}\leq t_{2} \leq T $,
	\begin{equation*}
		\begin{aligned}
			\mathbb{E} \int_{G} z^{2}(t_{2}) dx
			-\mathbb{E} \int_{G} z^{2}(t_{1}) dx
			&=
			\mathbb{E} \int_{t_{1}}^{t_{2}} \int_{G} d( z^2 )dx
			\geq
			- C (1+r_{1}^{2}) \mathbb{E} \int_{t_{1}}^{t_{2}} \int_{G} z^{2} dxdt
		\end{aligned}
	\end{equation*}
	Thus, in terms of the Gronwall inequality,  for $ 0 \leq t_{1}\leq t_{2} \leq T $,
	\begin{equation}\label{eq.pfThm1_2S3_2}
		\mathbb{E} \int_{G} z^{2}(t_{1}) dx \leq C e^{C T (1+r_{1}^{2})} \mathbb{E} \int_{G} z^{2}(t_{2}) dx.
	\end{equation}
	From \eqref{eq.pfThm1.2S3} and \eqref{eq.pfThm1_2S3_2}, we deduce
	\begin{equation*}
		\mathbb{E} \int_{G} z^{2}(0) d x
		\leq C e^{C  (1+r_{1}^{2}) T^{-1}}
		\Big[ \mathbb{E} \int_{Q_{0}} z^{2} d x d t+ \mathbb{E} \int_{Q} (a_{3}z+Z)^{2} d x d t \Big ].
	\end{equation*}
	
	Next, we deal with the case $ T >1 $. By the result for $ T = 1 $ shown above, we have
	\begin{align}
		\notag
		\mathbb{E} \int_{G} z^{2}(0) d x
		&
		\leq C e^{C (1+r_{1}^{2})} \bigl( | z|_{L_{\mathbb{F}}^{2}(0, 1 ; L^{2}(G_{0}))}+|a_{3} z+Z|_{L_{\mathbb{F}}^{2}(0, 1 ; L^{2}(G))} \bigr)\\
		\notag
		&
		\leq
		C e^{C (1+r_{1}^{2}) (T^{-1}+1)}  \bigl(| z|_{L_{\mathbb{F}}^{2}(0, T ; L^{2}(G_{0}))}^2+|a_{3} z+Z|_{L_{\mathbb{F}}^{2}(0, T ; L^{2}(G))}^2\bigr).
	\end{align}
	This completes the proof of Theorem \ref{thm.observabilityForBackwardEquation}.
\end{proof}

\appendix

\section{Proof of the weighted identity}\label{App-id}

\begin{proof}[Proof of Theorem \ref{thm.fundamentalIndentity}]

    It is clear that \vspace{-3mm}
    \begin{equation}\label{eq.pfthmFI.s1}
        \theta dh= dw -  s \alpha_{t} w dt,
    \end{equation}
    and \vspace{-3mm}
    \begin{align}
        \notag
    \theta \Delta^{2} h 
        & =
        \Delta^{2} w
        -4 s\lambda\xi \nabla \eta \cdot \nabla \Delta w 
        -4 s\lambda^{2}\xi  (\nabla^{2}w\nabla\eta\nabla\eta ) 
        -4 s\lambda\xi \sum\limits_{i,j=1}^{n} \eta_{x_{i}x_{j}} w_{x_{i}x_{j}} 
        + 2 s^{2}\lambda^{2}\xi^{2} |\nabla\eta|^{2}\Delta w 
        \\ \notag
        & \quad
        -2 s\lambda^{2}\xi |\nabla \eta|^{2} \Delta w  - 2s\lambda\xi  \Delta \eta \Delta w 
        + 4 s^{2}\lambda^{2}\xi^{2} ( \nabla^{2}w\nabla\eta\nabla\eta  ) 
        -4 \nabla \Delta \ell \cdot \nabla w 
        \\ \label{eq.space}
        & \quad
        +12 s^{2}\lambda^{3}\xi^{2} |\nabla\eta|^{2} \nabla\eta \nabla w 
        +8 s^{2}\lambda^{2}\xi^{2}  ( \nabla^{2}\eta \nabla \eta \nabla w  ) 
        -4 s^{3} \lambda^{3} \xi^{3} |\nabla\eta|^{2} \nabla\eta \nabla w 
        \\ \notag
        & \quad
        +4 s^{2} \lambda^{2} \xi^{2} \Delta \eta \nabla \eta \nabla w 
        + 4  (\nabla \ell \cdot \nabla \Delta \ell ) w 
        +2 |\nabla^{2} \ell|^{2} w -\Delta^{2} \ell w 
        - 6 s^{3} \lambda ^{4}\xi^{3} |\nabla\eta|^{4} w 
        \\ \notag
        & \quad
        - 4 s ^{3}\lambda ^{3}\xi^{3}  (\nabla^{2}\eta \nabla \eta \nabla \eta ) w 
        + s ^{4}\lambda ^{4}\xi^{4} |\nabla \eta|^{4} w 
        - 2 s^{3} \lambda ^{3}\xi^{3} |\nabla\eta|^{2} \Delta \eta w 
        + |\Delta \ell|^{2} w.
    \end{align}

    From \eqref{eq.pfthmFI.3}--\eqref{eq.pfthmFI.5}, \eqref{eq.pfthmFI.s1}, and \eqref{eq.space}, it follows that
    \begin{equation*}
        2\theta I (- dh+\Delta^{2} h dt)=2I ( I_{1}+I_{2}+I_{3}  ).
    \end{equation*}

    We will compute $ I I_{2} $ under the form $\sum\limits_{i=1}^{6} \sum\limits_{j=1}^{6} I_{i j}$, where $I_{i j}$ is the product of the $i$-th term of $I$ with the $j$-th term of $I_{2}$.

    By direct computations, we get the following equalities:
    \begin{align*}
        I_{11}
         &= \sum_{i,j,k,p=1}^n  \Big(  \Phi_{1}^{p} w_{x_{k}x_{k}x_{p}} w_{x_{i}x_{i}x_{j}} - \frac{1}{2} \Phi_{1}^{j} w_{x_{k}x_{k}x_{p}} w_{x_{i}x_{i}x_{p}}  \Big)_{x_{j}} dt
         - \sum_{i,j,k,p=1}^n \Phi_{1x_{j}}^{p}  w_{x_{k}x_{k}x_{p}} w_{x_{i}x_{i}x_{j}}dt
        \\
        & \quad
        + \frac{1}{2} \sum_{i,j,k,p=1}^n \Phi_{1x_{p}}^{p} w_{x_{k}x_{k}x_{j}} w_{x_{i}x_{i}x_{j}}dt,
        \\
        I_{12}
        &=\sum_{i,j,p=1}^n \Big( \Phi_{2} w_{x_{i}x_{i}x_{j}} w_{x_{p}x_{p}} 
        - \frac{1}{2} \Phi_{2x_{j}} w_{x_{p}x_{p}} w_{x_{i}x_{i}}  \Big)_{x_{j}}dt 
        - \Phi_{2}| \nabla \Delta w|^{2} dt
        + \frac{1}{2} \Delta \Phi_{2} |\Delta w|^{2} dt,
    \\
    I_{13}
        & =
        \sum_{i,j,k,p=1}^n  \Big ( \Phi_{3}^{kp}  w_{x_{k}x_{p}} w_{x_{i}x_{i}x_{j}}
        - \Phi_{3}^{kp}  w_{x_{k}x_{p}x_{j}} w_{x_{i}x_{i}}
        + \Phi_{3x_{j}}^{kp}  w_{x_{k}x_{p}} w_{x_{i}x_{i}}
        + \Phi_{3}^{kj}  w_{x_{k}x_{p}x_{p}} w_{x_{i}x_{i}}
        \\
        & \quad \quad \quad \quad \quad
        - \frac{1}{2}  \Phi_{3x_{p}}^{jp}  w_{x_{k}x_{k}} w_{x_{i}x_{i}}
        - 2  \Phi_{3x_{i}}^{kp}  w_{x_{k}x_{p}} w_{x_{i}x_{j}}
        +2  \Phi_{3x_{p}}^{kj}  w_{x_{i}x_{k}} w_{x_{i}x_{p}}
        -  \Phi_{3x_{j}}^{kp}  w_{x_{i}x_{k}} w_{x_{i}x_{p}} \Big ) _{x_{j}} dt
        \\
        & \quad
        - \sum_{i,j,k,p=1}^n  \Phi_{3}^{kp}  w_{x_{j}x_{j}x_{k}} w_{x_{i}x_{i}x_{p}}dt
        - \sum_{i,j,k,p=1}^n  \Phi_{3x_{j}x_{j}}^{kp}  w_{x_{k}x_{p}} w_{x_{i}x_{i}} dt
        +    \frac{1}{2} \sum_{k,p=1}^{n} \Phi_{3x_{k}x_{p}}^{kp}  |\Delta w|^{2}dt
        \\
        & \quad
        + 2 \sum_{i,j,k,p=1}^n  \Phi_{3x_{i}x_{j}}^{kp}  w_{x_{k}x_{p}} w_{x_{i}x_{j}}dt
        - 2 \sum_{i,j,k,p=1}^n  \Phi_{3x_{j}x_{p}}^{kp}  w_{x_{i}x_{j}} w_{x_{i}x_{k}}dt
        + \sum_{i,j,k,p=1}^n  \Phi_{3x_{j}x_{j}}^{kp}  w_{x_{i}x_{p}} w_{x_{i}x_{k}}dt.
    \\
    I_{14}   & = \sum_{i,j,k=1}^n  \Big ( \Phi_{4}^{k} w_{x_{i}x_{i}x_{j}} w_{x_{k}}
    - \Phi_{4x_{i}}^{k} w_{x_{i}x_{j}}w_{x_{k}}
    - \Phi_{4}^{k} w_{x_{i}x_{j}} w_{x_{i}x_{k}}
    + \Phi_{4x_{i}x_{j}}^{k}w_{x_{i}}w_{x_{k}}
    - \frac{1}{2} \Phi_{4x_{i}x_{k}}^{j} w_{x_{i}}w_{x_{k}}
    \\
    & \quad \quad \quad \quad  \
    + \frac{1}{2} \Phi_{4}^{j}w_{x_{i}x_{k}}^{2}\Big ) _{x_{j}} dt
    - \sum_{i,j,k=1}^n\Phi_{4x_{i}x_{j}x_{j}}^{k} w_{x_{i}}w_{x_{k}} dt
    + \sum_{i,j,k=1}^n\frac{1}{2} \Phi_{4x_{i}x_{j}x_{k}}^{k}w_{x_{i}}w_{x_{j}}dt
    \\
    & \quad
    + 2 \sum_{i,j,k=1}^{n} \Phi_{4x_{j}}^{k} w_{x_{i}x_{j}}w_{x_{i}x_{k}} dt
    - \frac{1}{2} \sum_{i,j,k=1}^{n} \Phi_{4x_{k}}^{k} w_{x_{i}x_{j}}^{2}dt,
    \\
    I_{15}
    & = \sum_{i, j=1}^n \Big( \Phi_{5} w_{x_{i}x_{i}x_{j}} w
    - \Phi_{5x_{j}} w_{x_{i}x_{i}}w + \Phi_{5x_{i}x_{i}} w_{x_{j}}w
    - \frac{1}{2} \Phi_{5x_{i}x_{i}x_{j}} w^{2}
    -\Phi_{5}w_{x_{i}x_{i}}w_{x_{j}}
    +2\Phi_{5x_{i}}w_{x_{i}}w_{x_{j}}
    \\
    & \quad \quad \quad \ \
    - \Phi_{5x_{j}}w_{x_{i}}^{2} \Big ) _{x_{j}}dt
    + \frac{1}{2} \sum_{i, j=1}^n  \Phi_{5x_{i}x_{i}x_{j}x_{j}} w^{2} dt
    -2 \sum_{i, j=1}^n \Phi_{5x_{i}x_{j}}w_{x_{i}}w_{x_{j}}dt
    +  \Phi_{5} |\Delta w|^{2}dt;
    \\
            I_{21}
            & = \frac{1}{2} \sum_{i,j,k=1}^n  ( \Psi_{2} \Phi_{1}^{j}w_{x_{i}x_{i}}w_{x_{k}x_{k}}  ) _{x_{j}}dt - \frac{1}{2} \sum_{k=1}^{n}  ( \Psi_{2} \Phi_{1}^{k}  ) _{x_{k}} |\Delta w|^{2}dt,
            \quad
            I_{22} =\Phi_{2}\Psi_{2} |\Delta w|^{2} dt,
            \\
            I_{23}
            & = \sum_{j,k,p=1}^n\Big [ \Psi_{2} \Phi_{3}^{kp} w_{x_{j}}w_{x_{k}x_{p}}
                   -  (\Psi_{2} \Phi_{3}^{jp} )_{x_{k}} w_{x_{k}} w_{x_{p}}
                   +\frac{1}{2}  (\Psi_{2} \Phi_{3}^{kp} )_{x_{j}} w_{x_{k}} w_{x_{p}}
                   - \Psi_{2} \Phi_{3}^{kj} w_{x_{p}}w_{x_{k}x_{p}}
                   \\
                   & \quad \quad \quad \ \
                   + \frac{1}{2} (\Psi_{2} \Phi_{3}^{jp} )_{x_{p}} w_{x_{k}}^{2}\Big]_{x_{j}}dt
                   + \sum_{j,k,p=1}^n  (\Psi_{2} \Phi_{3}^{kp} )_{x_{j}x_{k}} w_{x_{j}} w_{x_{p}}dt
                   - \frac{1}{2} \sum_{j,k,p=1}^n (\Psi_{2} \Phi_{3}^{kp} )_{x_{j}x_{j}} w_{x_{k}} w_{x_{p}}dt
                   \\
                   & \quad
                   -\frac{1}{2} \sum_{j,k,p=1}^n  (\Psi_{2} \Phi_{3}^{kp} )_{x_{k}x_{p}} w_{x_{j}}^{2}dt
                   + \sum_{j,k,p=1}^{n} \Psi_{2} \Phi_{3}^{kp}  w_{x_{j}x_{k}} w_{x_{j}x_{p}}dt,
        \\
        I_{24}
            & =\sum_{j,k=1}^n  \Big( \Psi_{2} \Phi_{4}^{k} w_{x_{j}}w_{x_{k}} - \frac{1}{2} \Psi_{2} \Phi_{4}^{j} w_{x_{k}}^{2}  \Big) _{x_{j}}dt- \sum_{j,k=1}^n   ( \Psi_{2} \Phi_{4}^{k}  )_{x_{j}} w_{x_{j}}w_{x_{k}}dt
            + \frac{1}{2} \sum_{j,k=1}^n   ( \Psi_{2} \Phi_{4}^{k}  )_{x_{k}} w_{x_{j}}^{2}dt,\\
            I_{25}
            &= \sum_{j=1}^n \Big[ \Psi_{2} \Phi_{5} w_{x_{j}}w - \frac{1}{2}  (\Psi_{2} \Phi_{5} )_{x_{j}} w^{2}  \Big] _{x_{j}}dt + \frac{1}{2}\sum_{j=1}^n  (\Psi_{2} \Phi_{5} )_{x_{j}x_{j}} w^{2}dt
            - \Psi_{2} \Phi_{5} |\nabla w|^{2}dt;
    \\
        I_{31}
            & =\sum_{i,j,k,p=1}^n  \Big( \Psi_{3}^{ik}\Phi_{1}^{p} w_{x_{i}x_{k}}w_{x_{p}x_{j}}
            - \frac{1}{2} \Psi_{3}^{ij}\Phi_{1}^{p} w_{x_{i}x_{k}}w_{x_{k}x_{p}}  \Big)_{x_{j}}dt
            - \sum_{i,j,k,p=1}^n ( \Psi_{3}^{ij}\Phi_{1}^{p} )_{x_{k}} w_{x_{i}x_{j}}w_{x_{k}x_{p}} dt
            \\
            & \quad
            + \frac{1}{2}\sum_{i,j,k,p=1}^n ( \Psi_{3}^{ij}\Phi_{1}^{p} )_{x_{i}} w_{x_{j}x_{k}}w_{x_{k}x_{p}} dt,
        \\
        I_{32}
        & =\sum_{i,j,p=1}^n \Big[  \Psi_{3}^{ij}\Phi_{2} w_{x_{i}}w_{x_{p}x_{p}}
        - ( \Psi_{3}^{ip}\Phi_{2}  )_{x_{p}} w_{x_{i}}w_{x_{j}}
        + \frac{1}{2}  ( \Psi_{3}^{ij}\Phi_{2}  )_{x_{i}} w_{x_{p}}^{2}
        -\Psi_{3}^{ip}\Phi_{2} w_{x_{i}}w_{x_{j}x_{p}}
        \\
        & \quad\quad\quad\quad
        + \frac{1}{2}  ( \Psi_{3}^{ip}\Phi_{2}  )_{x_{j}} w_{x_{i}} w_{x_{p}} \Big]_{x_{j}}  dt
        +\sum_{i,j,p=1}^n  ( \Psi_{3}^{ij}\Phi_{2}  )_{x_{j}x_{p}} w_{x_{i}}w_{x_{p}} dt
        - \frac{1}{2} \sum_{i,j,p=1}^n  ( \Psi_{3}^{ij}\Phi_{2}  )_{x_{i}x_{j}} w_{x_{p}}^{2} dt
        \\
        & \quad
        - \frac{1}{2} \sum_{i,j,p=1}^n  ( \Psi_{3}^{ij}\Phi_{2}  )_{x_{p}x_{p}} w_{x_{i}}w_{x_{j}} dt
        + \sum_{i,j,p=1}^n\Psi_{3}^{ij}\Phi_{2} w_{x_{i}x_{p}}w_{x_{j}x_{p}} dt,
        \\
        I_{33}
            & =\sum_{i,j,k,p=1}^n\Big[ \Psi_{3}^{ip}\Phi_{3}^{kj} w_{x_{i}x_{p}}w_{x_{k}}
            -  (\Psi_{3}^{ij}\Phi_{3}^{kp}  )_{x_{p}} w_{x_{i}}w_{x_{k}}
            + \frac{1}{2} (\Psi_{3}^{ik}\Phi_{3}^{jp}  )_{x_{p}} w_{x_{i}}w_{x_{k}}
            - \Psi_{3}^{ij}\Phi_{3}^{kp} w_{x_{i}x_{p}}w_{x_{k}}
            \\\notag
            & \quad
            +  \frac{1}{2} (\Psi_{3}^{ij}\Phi_{3}^{kp}  )_{x_{i}} w_{x_{p}}w_{x_{k}} \Big]_{x_{j}}dt
            + \sum_{i,j,k,p=1}^n  (\Psi_{3}^{ij}\Phi_{3}^{kp}  )_{x_{j}x_{p}} w_{x_{i}}w_{x_{k}} dt
            -\frac{1}{2} \sum_{i,j,k,p=1}^n  (\Psi_{3}^{ij}\Phi_{3}^{kp}  )_{x_{k}x_{p}} w_{x_{i}}w_{x_{j}} dt
            \\
            & \quad
            - \frac{1}{2} \sum_{i,j,k,p=1}^n  (\Psi_{3}^{ij}\Phi_{3}^{kp}  )_{x_{i}x_{j}} w_{x_{p}}w_{x_{k}} dt
            + \sum_{i,j,k,p=1}^n \Psi_{3}^{ij}\Phi_{3}^{kp} w_{x_{i}x_{p}}w_{x_{k}x_{j}}dt,
            \\
            I_{34}
            & = \frac{1}{2}\sum_{i,j,p=1}^n  ( \Psi_{3}^{ij}\Phi_{4}^{p} w_{x_{i}}w_{x_{p}}  )_{x_{j}}dt
            - \frac{1}{2} \sum_{i,j,p=1}^{n}  (\Psi_{3}^{ij}\Phi_{4}^{p} )_{x_{j}}w_{x_{i}}w_{x_{p}} dt,
            \\
            I_{35}
            & = \sum_{i, j=1}^n  \Big [\Psi_{3}^{ij}\Phi_{5} w_{x_{i}}w - \frac{1}{2}  ( \Psi_{3}^{ij}\Phi_{5}  )_{x_{i}} w^{2}   \Big ]_{x_{j}}dt + \frac{1}{2} \sum_{i, j=1}^n  ( \Psi_{3}^{ij}\Phi_{5}  )_{x_{i}x_{j}} w^{2} dt
            - \sum_{i, j=1}^n \Psi_{3}^{ij}\Phi_{5} w_{x_{i}}w_{x_{j}} dt;
            \\
        I_{41}
        & = \sum_{i,k,j=1}^n  \Big[ \Psi_{4}^{i} \Phi_{1}^{k} w_{x_{i}} w_{x_{k}x_{j}} - \frac{1}{2}  ( \Psi_{4}^{i} \Phi_{1}^{k}  )_{x_{j}} w_{x_{i}} w_{x_{k}}  \Big]_{x_{j}}   dt
        + \frac{1}{2}  \sum_{i,k,j=1}^n  ( \Psi_{4}^{i} \Phi_{1}^{k}  )_{x_{j}x_{j}}w_{x_{i}} w_{x_{k}} dt
        \\
        & \quad
        -  \sum_{i,k,j=1}^n \Psi_{4}^{i} \Phi_{1}^{k} w_{x_{i}x_{j}} w_{x_{k}x_{j}} dt,
        \\
        I_{42}
                & = \sum_{i, j=1}^n   \Big( \Psi_{4}^{i}\Phi_{2} w_{x_{i}} w_{x_{j}}
                - \frac{1}{2} \Psi_{4}^{j}\Phi_{2} w_{x_{i}}^{2} \Big)_{x_{j}}dt
                -\sum_{i, j=1}^n   (\Psi_{4}^{i}\Phi_{2}  )_{x_{j}} w_{x_{i}} w_{x_{j}}dt
                + \frac{1}{2}  \sum_{i, j=1}^n   (\Psi_{4}^{j}\Phi_{2}  )_{x_{j}} w_{x_{i}}^{2}dt,
        \\
        I_{43}
                & = \sum_{i,j,k=1}^n   \Big( \Psi_{4}^{i}\Phi_{3}^{jk}  w_{x_{i}} w_{x_{k}} - \frac{1}{2} \Psi_{4}^{j}\Phi_{3}^{ik}  w_{x_{i}} w_{x_{k}} \Big)_{x_{j}}dt-\sum_{i,j,k=1}^{n}   (\Psi_{4}^{i}\Phi_{3}^{jk}   )_{x_{k}} w_{x_{i}} w_{x_{j}}dt
                \\
        & \quad
                 + \frac{1}{2}  \sum_{i,j,k=1}^n  (\Psi_{4}^{i}\Phi_{3}^{jk}   )_{x_{i}} w_{x_{j}} w_{x_{k}}dt,
        \\
        I_{44}&=\sum_{i, j=1}^n \Psi_{4}^{i}\Phi_{4}^{j} w_{x_{i}}w_{x_{j}}dt,
             \quad
             I_{45}  = \frac{1}{2}\sum_{j=1}^n  (  \Psi_{4}^{j}\Phi_{5} w^{2}  )_{x_{j}} dt - \frac{1}{2}\sum_{j=1}^n  ( \Psi_{4}^{j}\Phi_{5}   )_{x_{j}} w^{2}dt;
    \\
        I_{51}
        & = \sum_{i,k,j=1}^n  ( \Psi_{5}^{i} \Phi_{1}^{k} w_{x_{i}} w_{x_{k}x_{j}}  ) _{x_{j}}dt
                   - \sum_{i,k,j=1}^n ( \Psi_{5}^{i} \Phi_{1}^{k} )_{x_{j}} w_{x_{i}} w_{x_{k}x_{j}}  dt
                   -  \sum_{i,k,j=1}^n \Psi_{5}^{i} \Phi_{1}^{k} w_{x_{i}x_{j}} w_{x_{k}x_{j}} dt,
        \\
        I_{52} & = \sum_{i, j=1}^n   \Big( \Psi_{5}^{i}\Phi_{2} w_{x_{i}} w_{x_{j}}
                   - \frac{1}{2} \Psi_{5}^{j}\Phi_{2} w_{x_{i}}^{2} \Big)_{x_{j}}dt
                   -\sum_{i, j=1}^n   (\Psi_{5}^{i}\Phi_{2}  )_{x_{j}} w_{x_{i}} w_{x_{j}}dt
                   + \frac{1}{2}  \sum_{i, j=1}^n   (\Psi_{5}^{j}\Phi_{2}  )_{x_{j}} w_{x_{i}}^{2}dt,
        \\
        I_{53}  & = \sum_{i,j,k=1}^n   \Big( \Psi_{5}^{i}\Phi_{3}^{jk}  w_{x_{i}} w_{x_{k}}
                   - \frac{1}{2} \Psi_{5}^{j}\Phi_{3}^{ik}  w_{x_{i}} w_{x_{k}} \Big)_{x_{j}}dt-\sum_{i,j,k=1}^{n}   (\Psi_{5}^{i}\Phi_{3}^{jk}   )_{x_{k}} w_{x_{i}} w_{x_{j}}dt
                   \\
        & \quad
                   + \frac{1}{2}  \sum_{i, j=1}^n   (\Psi_{5}^{i}\Phi_{3}^{jk}   )_{x_{i}} w_{x_{j}} w_{x_{k}}dt,
        \\
         I_{54}&=\sum_{i, j=1}^n \Psi_{5}^{i}\Phi_{5}^{j} w_{x_{i}}w_{x_{j}}dt,
         \quad
         I_{55} = \frac{1}{2}\sum_{j=1}^n  (  \Psi_{5}^{j}\Phi_{5} w^{2} ) _{x_{j}} dt - \frac{1}{2}\sum_{j=1}^n  ( \Psi_{5}^{j}\Phi_{5}   )_{x_{j}} w^{2}dt;
    \intertext{and}
        I_{61}
               & =\sum_{i, j=1}^n  \Big[  \Psi_{6} \Phi_{1}^{i} w_{x_{i}x_{j}} w
               -  ( \Psi_{6} \Phi_{1}^{i}  )_{x_{j}} w_{x_{i}} w
               + \frac{1}{2}  ( \Psi_{6} \Phi_{1}^{j}  )_{x_{i}x_{i}} w^{2}
               - \frac{1}{2} \Psi_{6} \Phi_{1}^{j} w_{x_{i}}^{2} \Big]_{x_{j}}dt
               \\
               & \quad
               - \frac{1}{2}\sum_{i, j=1}^n  ( \Psi_{6} \Phi_{1}^{i}  )_{x_{i}x_{j}x_{j}} w^{2}dt
               + \sum_{i, j=1}^n  ( \Psi_{6} \Phi_{1}^{i}  )_{x_{j}} w_{x_{i}} w_{x_{j}}dt
               +\frac{1}{2}\sum_{i, j=1}^n  ( \Psi_{6} \Phi_{1}^{i}  )_{x_{i}} w_{x_{j}}^{2}dt
               \\
               I_{62} & = \sum_{j=1}^n \Big [ \Psi_{6}\Phi_{2}w w_{x_{j}}
                       - \frac{1}{2}  ( \Psi_{6}\Phi_{2}  )_{x_{j}}w^{2} \Big ]_{x_{j}}dt
                       +\frac{1}{2}  \sum_{j=1}^n   ( \Psi_{6}\Phi_{2}  )_{x_{j}x_{j}} w^{2}dt
                       - \sum_{j=1}^n \Psi_{6}\Phi_{2} w_{x_{j}}^{2}dt,
                       \\
            I_{63} & = \sum_{i, j=1}^n \Big [ \Psi_{6}\Phi_{3}^{ij}w w_{x_{i}}
                    - \frac{1}{2}  ( \Psi_{6}\Phi_{3}^{ij}  )_{x_{i}}w^{2} \Big ]_{x_{j}}dt
                    +\frac{1}{2}  \sum_{i, j=1}^n   ( \Psi_{6}\Phi_{3}^{ij}  )_{x_{i}x_{j}} w^{2}dt
                    - \sum_{i, j=1}^n \Psi_{6}\Phi_{3}^{ij} w_{x_{i}}w_{x_{j}}dt,
            \\
            I_{64} & =\frac{1}{2} \sum_{j=1}^n  (\Psi_{6}\Phi_{4}^{j} w^{2}   )_{x_{j}}dt
            - \frac{1}{2}  \sum_{j=1}^n  (\Psi_{6}\Phi_{4}^{j} )_{x_{j}} w^{2} dt,
            \quad
            I_{65}  =\Psi_{6}\Phi_{5} w^{2}dt.
        \end{align*}
    Using It\^o's formula, we conclude
    \begin{align*}
        I_{16} & = 
        -\sum_{i, j=1}^n  ( w_{x_{i}x_{i}x_{j}} dw - w_{x_{i}x_{j}}dw_{x_{i}}   )_{x_{j}}
        -\frac{1}{2} d  (|\nabla^{2}w|^{2} ) + \frac{1}{2} | d\nabla^{2}w   |^{2},
            \\
            I_{26}  & =- \sum_{j=1}^n ( \Psi_{2} w_{x_{j}} dw  )_{x_{j}}
                + \sum_{i=1}^n \Psi_{2_{x_{i}}} w_{x_{i}} dw
                + \frac{1}{2} d ( \Psi_{2} |\nabla w|^{2}  )
                - \frac{1}{2} \Psi_{2t} |\nabla w|^{2} dt
                -\frac{1}{2} \Psi_{2}  | d \nabla w  |^{2},
            \\
            I_{36}  & = 
                - \sum_{i, j=1}^n (\Psi_{3}^{ij} w_{x_{i}} dw  )_{x_{j}}
                +  \sum_{i, j=1}^n \Psi_{3x_{j}}^{ij} w_{x_{i}} dw
                + 2 d  ( |  s \lambda \xi \nabla \eta \nabla w  |^{2} )
                - 4 \sum_{i, j=1}^n s^{2}\lambda^{2}\xi ( \xi \eta_{x_{j}} )_{t} \eta_{x_{i}} w_{x_{i}} w_{x_{j}} dt
                \\
                       & \quad
                - 2  s^{2}\lambda^{2}\xi^{2}  | \nabla \eta d \nabla w  |^{2},
            \\
             I_{46}&= - \sum_{i=1}^n \Psi_{4}^{i} w_{x_{i}} dw, \quad
              I_{56} = - \sum_{i=1}^n \Psi_{5}^{i} w_{x_{i}} dw,
             \quad
             I_{66}   =
                -\frac{1}{2} d ( \Psi_{6} w^{2}  )
                +\frac{1}{2} \Psi_{6 t}w^{2}dt
                +\frac{1}{2} \Psi_{6} ( dw  )^{2}.
    \end{align*}

By summing all the $ I_{ij} $,  we get
\eqref{eq.finalEquation}.
\end{proof}

\section{Proof of Lemma \ref{lemma.auxiliaryEstimates}}\label{sec.lemmas}

To prove Lemma \ref{lemma.auxiliaryEstimates}, we need the following regularity estimate for the stochastic fourth order parabolic equation.

\begin{lemma}\label{lemma.AdjustBackwardEstimates}
There exists a constant $ C>0 $,  such that for all
$ (h, g) \in  L_{\mathbb{F}}^{2}(0, T ; H^{4}(G)) \times L_{\mathbb{F}}^{2}(0, T;$ $L^{2}(G)) $, and $ f \in L_{\mathbb{F}}^{2}(0, T ; L^{2}(G)) $ satisfying
\begin{equation*}
    \left\{
    \begin{alignedat}{2}
        & d h= \Delta^2 h d t  + f d t+g d W(t) && \quad \text { in } Q, \\
        & h=\Delta h=0 && \quad \text { on } \Sigma,\\
        & h(T, \cdot)=0 && \quad \text { in } G,
    \end{alignedat}
    \right.
\end{equation*}
we have
\begin{equation*}
    |h|^{2}_{ L_{\mathbb{F}}^{2}(0, T ; H^{4}(G))} \leq C |f|^{2}_{L_{\mathbb{F}}^{2}(0, T ; L^{2}(G)) }.
\end{equation*}
\end{lemma}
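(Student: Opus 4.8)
The statement is an a priori maximal $L^2$-regularity estimate for the backward stochastic biharmonic heat equation, and the plan is to prove it by diagonalizing the spatial operator. Under the ``Navier'' boundary conditions $h=\Delta h=0$, the biharmonic operator $\Delta^2$ is diagonalized by the eigenfunctions of the Dirichlet Laplacian, so the equation decouples into countably many scalar linear backward stochastic differential equations, each of which can be solved in closed form and estimated by hand; summing the per-mode estimates and then applying elliptic regularity twice recovers the full $H^4$ norm. Note that this is purely an a priori bound: existence/regularity of $(h,g)$ is assumed, so only the inequality must be established.

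First I would introduce the $L^2(G)$-orthonormal basis $\{e_k\}_{k\ge1}\subset C^\infty(\overline{G})$ of eigenfunctions of $-\Delta$ with homogeneous Dirichlet condition, with eigenvalues $0<\mu_1\le\mu_2\le\cdots\to\infty$; then $e_k=\Delta e_k=0$ on $\partial G$ and $\Delta^2 e_k=\mu_k^2 e_k$. Write $h=\sum_k h_k e_k$, $f=\sum_k f_k e_k$, $g=\sum_k g_k e_k$ with $h_k=\int_G h\,e_k\,dx$, and similarly for $f_k,g_k$. Since $h(t)\in H^4(G)$ obeys the Navier conditions and so does $e_k$, a fourfold integration by parts (all boundary terms vanish) gives $\int_G(\Delta^2 h)\,e_k\,dx=\mu_k^2 h_k$; hence testing the equation against $e_k$ shows that the continuous modification of $h_k$ solves the scalar BSDE $dh_k=\mu_k^2 h_k\,dt+f_k\,dt+g_k\,dW(t)$ on $(0,T)$ with $h_k(T)=0$. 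Multiplying by $e^{-\mu_k^2 t}$, integrating over $[t,T]$, using $h_k(T)=0$, and taking $\mathbb E[\cdot\mid\mathcal F_t]$ — the It\^o integral of $e^{-\mu_k^2 s}g_k$ is a true martingale because $\mathbb E\int_0^T|g_k|^2\,ds\le\mathbb E\int_0^T|g|^2_{L^2(G)}\,ds<\infty$ — yields $h_k(t)=-\,\mathbb E\big[\int_t^T e^{-\mu_k^2(s-t)}f_k(s)\,ds\mid\mathcal F_t\big]$.

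The key step is the per-mode estimate. By conditional Jensen's inequality, then the Cauchy--Schwarz inequality with weight $e^{-\mu_k^2(s-t)}$ and the bound $\int_t^T e^{-\mu_k^2(s-t)}\,ds\le\mu_k^{-2}$, one obtains $|h_k(t)|^2\le\mu_k^{-2}\,\mathbb E\big[\int_t^T e^{-\mu_k^2(s-t)}|f_k(s)|^2\,ds\mid\mathcal F_t\big]$. Taking expectation, integrating in $t\in(0,T)$, applying Fubini, and using $\int_0^s e^{-\mu_k^2(s-t)}\,dt\le\mu_k^{-2}$ once more gives $\mu_k^4\,\mathbb E\int_0^T|h_k|^2\,dt\le\mathbb E\int_0^T|f_k|^2\,ds$. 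Summing over $k$ and invoking Parseval's identity in the form $\sum_k\mu_k^4|h_k(t)|^2=|\Delta^2 h(t)|_{L^2(G)}^2$ (legitimate since $h(t)\in H^4(G)$ with the Navier conditions, so that $(\Delta^2 h)_k=\mu_k^2 h_k$) and $\sum_k|f_k|^2=|f|_{L^2(G)}^2$, we get $\mathbb E\int_0^T|\Delta^2 h|_{L^2(G)}^2\,dt\le|f|_{L^2_{\mathbb F}(0,T;L^2(G))}^2$.

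It remains to pass from $|\Delta^2 h|_{L^2(G)}$ to $|h|_{H^4(G)}$, which is standard elliptic regularity for the Dirichlet problem used twice: setting $v(t):=-\Delta h(t)\in H^2(G)\cap H^1_0(G)$, one has $-\Delta v(t)=\Delta^2 h(t)\in L^2(G)$, so $|v(t)|_{H^2(G)}\le C|\Delta^2 h(t)|_{L^2(G)}$; and then $-\Delta h(t)=v(t)$ with $h(t)\in H^1_0(G)$ yields $|h(t)|_{H^4(G)}\le C|v(t)|_{H^2(G)}\le C|\Delta^2 h(t)|_{L^2(G)}$ with $C=C(G)$. Integrating in $t$ and taking expectation completes the proof. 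The only points requiring care are checking that all boundary terms drop out when testing against $e_k$ (so the decoupling into scalar BSDEs is exact) and that the relevant stochastic integrals are genuine martingales — both immediate from the Navier conditions and the hypothesis $f,g\in L^2_{\mathbb F}(0,T;L^2(G))$, so no stopping-time localization is needed. The substantive observation is that gaining four spatial derivatives from an $L^2$ source cannot be done by a naive energy/bootstrap argument (that would require $\Delta f\in L^2$), but is transparent once the spatial operator is diagonalized.
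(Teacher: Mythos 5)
Your proof is correct, and it takes a genuinely different route from the paper's. The paper does not argue directly on the given solution $h$: it introduces the operator $A=\Delta^{2}$ with Navier conditions, solves a finite system of scalar backward SDEs to build Galerkin approximations $h^{m}=\sum_{i\le m}c_{i}^{m}\phi_{i}$ with data $(f,\phi_{i})_{L^{2}(G)}$, applies It\^o's formula to $h^{m}\Delta^{2}h^{m}$, integrates by parts (using the extra regularity $\phi_{i}\in H^{8}(G)$ and $\Delta^{3}\phi_{i}=0$ on $\partial G$) and the sign of $-\int_{G}h^{m}(0)\Delta^{2}h^{m}(0)\,dx$ to obtain $\mathbb{E}\int_{Q}|\Delta^{2}h^{m}|^{2}\le C\,\mathbb{E}\int_{Q}|f^{m}|^{2}$, then invokes elliptic regularity, weak compactness, and the well-posedness theory for backward stochastic evolution equations to identify the weak limit with $h$. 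You instead exploit that the statement is a pure a priori estimate: you project the given equation onto the Dirichlet eigenfunctions, solve each scalar BSDE explicitly by the conditional-expectation formula, gain the factor $\mu_{k}^{-4}$ by a weighted Cauchy--Schwarz in time, and resum via Parseval before applying the same double elliptic-regularity step. This avoids the Galerkin construction, the It\^o energy identity, and the limit/identification argument entirely, and it does not need the $H^{8}$ regularity of the eigenfunctions; the price is that it leans on the exact diagonal structure (self-adjoint, constant-coefficient operator), whereas the paper's energy method would survive lower-order or non-symmetric perturbations for which the modes do not decouple. One small caveat: your closing remark that an energy argument ``would require $\Delta f\in L^{2}$'' is only true for a naive computation on $h$ itself -- the paper's proof is precisely an energy argument, made legitimate by performing it at the Galerkin level where $\Delta^{2}f^{m}$ makes sense and integrating by parts back onto $h^{m}$; this does not affect the validity of your proof.
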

\begin{proof} The proof is standard. For the sake of completeness, we provide a sketch below. Define an unbounded operator $A$ on $L^{2}(G)$ as follows:
    \begin{equation*}
        \left\{\begin{aligned}
            &\mathcal{D}(A)= \{ \varphi \in H^{4}(G)  \mid  \varphi, \Delta \varphi \in H^{1}_{0}(G) \},   \\
            &A \varphi= \Delta^{2} \varphi,    \quad \forall \varphi \in \mathcal{D}(A).
        \end{aligned}\right.
    \end{equation*}
Denote by $0<\mu_1<\mu_2\leq \cdots$ the eigenvalues of $A$ and
by $\{\phi_{i}\}_{i=1}^{\infty} \subset \mathcal{D}(A) $ the corresponding eigenfunctions such that $ |\phi_{i} |_{L^{2}(G)}=1$ ($i=1, 2, 3, \cdots$), which serves as an orthonormal basis of $ L^{2}(G) $.  Thanks to the regularity for solutions to the elliptic equations (see \cite[Chapter 2, Theorem 5.1]{Lions1972a}), we have $ \phi_{i} \in H^{8}(G) $, and $ \Delta^{3} \phi_{i}=0 $ on $ \partial G $ for $ i=1,2,3, \cdots $

For $ i=1,\dots,m $, consider the following backward stochastic differential  equations
\begin{equation}\label{eq.lemma1E1}
    \begin{cases}
        d c_{i}^{m} =  \mu_{i} c_{i}^{m} dt + ( f, \phi_{i} )_{L^{2}(G)}  dt + g_{i}^{m} dW(t),\\
        c_{i}^{m}(T)=0
    \end{cases}
\end{equation}
We have $ ( f, \phi_{i} )_{L^{2}(G)}  \in L^{2}_{\mathbb{F}}(0,T;\mathbb{R})$. According to the classical theory of backward stochastic differential equations (see \cite[Theorem 4.2]{Lue2021a}),  we know that there is a unique solution  $( c_{i}^{m} (\cdot) , g_{i}^{m} (\cdot) ) \in L_{\mathbb{F}}^{2}(\Omega ; C([0, T] ; \mathbb{R})) \times L_{\mathbb{F}}^{2}(0, T ; \mathbb{R})$ for  $i=1, \ldots, m$.

Denote
\begin{equation}\label{eq.lemma1E2}
    h^{m}=\sum_{i=1}^{m} c_{i}^{m}(t) \phi_{i}.
\end{equation}
From  \eqref{eq.lemma1E1} and \eqref{eq.lemma1E2}, we get
\begin{equation*}
    d h^{m} = \Delta^{2} h^{m} dt + f^{m} dt + g^{m} dW(t),
\end{equation*}
where $ f^{m} = \sum\limits_{i=1}^{m} ( f, \phi_{i} )_{L^{2}(G)} \phi_{i} $, $ g^{m} = \sum\limits_{i=1}^{m} g_{i}^{m}\phi_{i} $ and $ h^{m}(T)\equiv 0 $.

Using It\^o's formula, we get that
\begin{equation}\label{eq.lemma1E4}
\begin{aligned}
d( h^{m} \Delta^{2} h^{m} ) &= ( \Delta^{2}
h^{m} )^{2} dt +  f^{m} \Delta^{2} h^{m}  dt +
h^{m} \Delta^{4}h^{m} dt + h^{m} \Delta^{2}f^{m}
dt + g^{m} \Delta^{2}g^{m} dt
\\
& \quad+ g^{m} \Delta^{2} h^{m} d W(t) + h^{m}
\Delta^{2} g^{m} dW(t).
\end{aligned}
\end{equation}
Integrating the equality \eqref{eq.lemma1E4} on $ Q $, taking expectation in both sides, we obtain
\begin{align}
    \label{eq.lemma1E5}
    - \int _{G} h^{m}(0) \Delta^{2} h^{m}(0) dx
    = \mathbb{E} \int_{Q} [
        ( \Delta^{2} h^{m} )^{2} +  f^{m} \Delta^{2} h^{m} +  h^{m} \Delta^{4}h^{m} +  h^{m} \Delta^{2}f^{m}  + g^{m} \Delta^{2}g^{m}
    ] dxdt.
\end{align}

Thanks to  \eqref{eq.lemma1E2}, we get
\begin{equation}\label{eq.lemma1E4_1}
- \int _{G} h^{m}(0) \Delta^{2} h^{m}(0) dx =
-\sum_{i=1}^{m} \mu_{i} ( c_{i}^{m}(0) )^{2}
\leq 0.
\end{equation}

Utilizing integration by parts, noting $ h^{m}=\Delta h^{m}=\Delta^{2} h^{m}=\Delta^{3} h^{m}= f^{m}=\Delta f^{m}=g^{m}=\Delta g^{m}=0 $ on $ \Sigma $, we have
\begin{align}
    \label{eq.lemma1E6}
    \mathbb{E} \int_{Q} [
        h^{m} \Delta^{4}h^{m} +  h^{m} \Delta^{2}f^{m}  + g^{m} \Delta^{2}g^{m}
    ] dxdt
    =
    \mathbb{E} \int_{Q} [
        (\Delta^{2} h^{m} )^{2} +   f^{m} \Delta^{2} h^{m}   +(  \Delta g^{m} )^{2}
    ] dxdt.
\end{align}

Combining \eqref{eq.lemma1E5}--\eqref{eq.lemma1E6}, using Cauchy's inequality,  we find
\begin{equation}\label{eq.lemma1E7}
    \mathbb{E} \int_{Q} | \Delta^{2} h^{m} |^{2} dxdt \leq  C \mathbb{E} \int_{Q} | f^{m} |^{2} dxdt.
\end{equation}
Thanks to elliptic regularity theory (see \cite[Chapter 4, Theorem 4.1]{Lions1972b}), we get
\begin{equation}\label{eq.lemma1E8}
    |h^{m}|^{2}_{L^{2}_{\mathbb{F}}( 0,T; H^{4}(G) )} \leq C
        |\Delta^{2} h^{m}|^{2}_{L^{2}_{\mathbb{F}}( 0,T; L^{2}(G) )}
        .
\end{equation}
Combining \eqref{eq.lemma1E7}  and \eqref{eq.lemma1E8}, we have
\begin{equation*}
    |h^{m}|^{2}_{L^{2}_{\mathbb{F}}( 0,T; H^{4}(G) )} \leq C | f^{m} |^{2} _{L^{2}_{\mathbb{F}}( 0,T; L^{2}(G) )}.
\end{equation*}
Therefore $ \{h^{m}\}_{m=1}^{\infty} $ is
bounded in  $ L^{2}_{\mathbb{F}}( 0,T; H^{4}(G)
) $, and there exists a subsequence, which is
still denoted by $ \{h^{m}\}_{m=1}^{\infty} $,
such that $ h^{m} $ weakly convergence to $
\tilde{h} \in L^{2}_{\mathbb{F} } ( 0,T;
H^{4}(G) ) $ as $m$ tends to $\infty$. On the other hand, thanks to the well-posedness of
backward stochastic evolution equations (see
\cite[Theorem 4.11]{Lue2021a}), we know that $\lim\limits_{m\to\infty}
|h^{m}- h|_{L^{2}_{\mathbb{F}}( 0,T; L^{2}(G)) }
= 0$. Hence, $ \tilde{h}=h $ for a.e.
$(t,x,\om)\in [0,T]\times G\times\Omega$ and
\begin{equation*}
    |h|_{L^{2}_{\mathbb{F} } ( 0,T; H^{4}(G) )} \leq \liminf_{m \rightarrow \infty} |h^{m}|_{L^{2}_{\mathbb{F} } ( 0,T; H^{4}(G) )} \leq C |f|_{L^{2}_{\mathbb{F} } ( 0,T; L^{2}(G) )}.
\end{equation*}
\end{proof}

\begin{proof}[Proof of Lemma \ref{lemma.auxiliaryEstimates} ]

Using the trace inequality (e.g., \cite[Theorem
9.4]{Lions1972a}) and interpolation inequality
between Sobolev spaces (e.g., \cite[Theorem
9.1]{Lions1972a}), we have
\begin{equation}\label{eq.lemma2E1}
\begin{gathered}
|\nabla^{2}h |^{2}_{L^{2}(\partial G)}   \leq
C|h |^{2}_{H^{\frac{5}{2}}(G)}\leq  C
|h|_{H^{2}(G)} |h|_{H^{3}(G)}, \\
 |\nabla^{3}h |^{2}_{L^{2}(\partial G)}  \leq C|h
|^{2}_{H^{\frac{7}{2}}(G)}\leq C |h|_{H^{3}(G)}
|h|_{H^{4}(G)}.
\end{gathered}
\end{equation}
By \eqref{eq.lemma2E1}, \eqref{eq.alphastar}, and  interpolation results between the Hilbert
spaces $L^{2}_{\mathbb{F}}(0, T ; H^{r}(G))(r
\in[0,4])$ (see \cite[Chapter 1, Remark 14.4]{Lions1972a}), we have
\begin{equation}\label{eq.lemma2E2}
    \begin{aligned}
        \mathbb{E} \int_{\Sigma}  s^{\frac{9}{4}} \lambda^{3} \xi^{\frac{9}{4}}  \theta^{2} |\nabla^{2} h|^{2} d\sigma dt
     &\leq C \mathbb{E} \int_{0}^{T} s^{\frac{9}{4}} \lambda^{3} \xi_{\star}^{\frac{9}{4}} e^{2s \alpha_{\star}} |h|^{\frac{3}{4}} _{L^{2}(G)}  |h|^{\frac{5}{4}} _{H^{4}(G)} dt \\
        & \leq C\Big( \mathbb{E} \int_{Q} s^{6} \lambda^{8} \xi^{6} \theta^{2} h^{2} dxdt
        + \mathbb{E} \int_{0}^{T} e^{2s \alpha_{\star}} |h|^{2} _{H^{4}(G)} dt\Big),
    \end{aligned}
\end{equation}
where we use $ \alpha \geq \alpha_{\star} $ and $ \xi \geq \xi_{\star} $ in $ \overline{Q} $.

Similarly, we can obtain that
\begin{align}
    \notag
    \mathbb{E} \int_{\Sigma}  s^{\frac{3}{4}} \lambda \xi^{\frac{3}{4}}  \theta^{2} |\nabla^{3} h|^{2} d\sigma dt
    &\leq C \mathbb{E} \int_{0}^{T} s^{\frac{3}{4}} \lambda \xi_{\star}^{\frac{3}{4}} e^{2s \alpha_{\star}} |h|^{\frac{1}{4}} _{L^{2}(G)}  |h|^{\frac{7}{4}} _{H^{4}(G)} dt
    \\
    \label{eq.lemma2E3}
    &
    \leq  C \Big(\mathbb{E} \int_{Q} s^{6} \lambda^{8} \xi^{6} \theta^{2} h^{2} dxdt
    + \mathbb{E} \int_{0}^{T} e^{2s \alpha_{\star}} |h|^{2} _{H^{4}(G)} dt\Big).
\end{align}

To estimate $  \mathbb{E} \int_{0}^{T} e^{2s \alpha_{\star}} |h|^{2} _{H^{4}(G)} dt $, we set $ h_{\star}=e^{s \alpha_{\star}} h$. This function satisfies
\begin{equation*}
\left\{
\begin{alignedat}{2}
& - d h_{\star} +  \Delta^2 h_{\star} d t =
\big[e^{s\alpha_{\star}} f - ( e^{s\alpha_{\star}})_{t} h \big] d t
+ e^{s\alpha_{\star}} g d W(t) && \quad \text { in } Q, \\
& h_{\star}=\Delta h_{\star}=0 && \quad \text { on } \Sigma,\\
& h_{\star}(T, \cdot)=0 && \quad \text { in } G,
\end{alignedat}
\right.
\end{equation*}
where we use $ \lim\limits_{t \rightarrow T^{-}} e^{s \alpha_{\star}} = 0 $.

Thanks to Lemma
\ref{lemma.AdjustBackwardEstimates}  and
\eqref{eq.estimatesOrder}, for   $ s\geq C
T^{1/2} $, we have
\begin{align}
    \notag
    \mathbb{E} \int_{0}^{T} e^{2s \alpha_{\star}} |h|^{2} _{H^{4}(G)} dt
    &= |h_{\star}|^{2}_{ L_{\mathbb{F}}^{2}(0, T ; H^{4}(G))}
    \leq C |e^{s\alpha_{\star}} f - ( e^{s\alpha_{\star}} )_{t} h|^{2}_{L_{\mathbb{F}}^{2}(0, T ; L^{2}(G)) }
    \\ \label{eq.lemma2E5}
& \leq C \Big( \mathbb{E} \int_{Q} \theta^{2} |f|^{2}
dxdt + \mathbb{E}\int_{Q}  s^{6} \xi^{6}
\theta^{2} h^{2} dxdt\Big).
\end{align}
Combining \eqref{eq.lemma2E2}--\eqref{eq.lemma2E5}, the proof is complete.
\end{proof}


\end{document}